\documentclass[letterpaper, reqno,oneside]{amsart}

\usepackage{amsmath}
\usepackage{amsthm}
\usepackage{graphicx} 
\usepackage[letterpaper]{geometry}          
\usepackage[parfill]{parskip}    
\usepackage{graphicx}
\usepackage{amssymb}
\usepackage{epstopdf}
\usepackage{multicol}
\usepackage{hyperref}
\usepackage{cleveref}
\usepackage{MnSymbol}
\usepackage{bm}
\usepackage{verbatim}
\usepackage{soul}
\usepackage{pgf,tikz,tikz-cd}
\usepackage{mathrsfs}
\usetikzlibrary{arrows}
\usepackage{color}
\usepackage[sort&compress]{natbib}

\DeclareMathOperator{\PGL}{PGL}

\newtheorem*{theorem*}{Theorem}
\newtheorem*{question*}{Question}

\newtheorem{theorem}{Theorem}
\newtheorem{lemma}[theorem]{Lemma}

\newtheorem{definition}[theorem]{Definition}
\newtheorem{proposition}[theorem]{Proposition}
\newtheorem{corollary}[theorem]{Corollary}
\theoremstyle{remark}
\newtheorem{remark}{Remark}

\Crefname{question}{Question}{Question}

\newcommand{\rank}{\textup{rank}}

\newcommand{\RR}{\mathbb{R}}

\newcommand{\PP}{\mathbb{P}}
\newcommand{\CC}{\mathbb{C}}
\newcommand{\ZZ}{\mathbb{Z}}

\newcommand{\SL}{\textup{SL}}
\newcommand{\M}{\mathcal{M}}

\newcommand{\X}{\mathcal{X}}
\newcommand{\Y}{\mathcal{Y}}

\newcommand{\OO}{\mathcal{O}}
\newcommand{\sym}{\mathrm{Sym}}
\newcommand{\Pcal}{\mathcal{P}}
\newcommand{\Qcal}{\mathcal{Q}}

\usepackage[draft]{fixme}
\fxsetup{theme=color, mode=multiuser, noinline}
\FXRegisterAuthor{rt}{RT}{\color{blue}Rekha}
\FXRegisterAuthor{go}{GO}{\color{teal}Giorgio}

\title{When is one pinhole camera image equal to \\
some other pinhole camera image?}
\author{Giorgio Ottaviani}
\address{Dipartimento di Matematica e Informatica U. Dini, Universit\'a di Firenze, viale Morgagni 67/A, 50134 Firenze, Italy}
\email{giorgio.ottaviani@unifi.it}
\author{Rekha R. Thomas}
\address{Department of Mathematics, University of Washington,
Seattle, WA 98195, USA}
\email{rrthomas@uw.edu}
\subjclass[2020]{14N05,14L30,68T45} 
\date{\today}

\begin{document}
\begin{abstract}
Generically, one expects the images of two different point sets, in two different (projective) cameras, to be different. However, it can happen that the images are the same up to a projective transformation which is an instance of ill-posedness in computer vision. We prove that the images can become projectively equivalent only for point pairs with  at most seven elements. In each case, we give explicit descriptions of the Zariski closure of the 
locus of camera centers which we call the {\em centers-variety}. To do this we use classical invariant theory and the geometry of moduli spaces of ordered points in the projective plane. The most involved case is that of seven points which uses a natural parametrization of the Goepel variety. 
\end{abstract} 
\maketitle

\section{Introduction}
A {\em projective camera} is a linear map from $\PP^3 \dashrightarrow \PP^2$ that is represented (up to scale) by a matrix $A \in \RR^{3 \times 4}$ of rank three. This is a mathematical formalization (and generalization) of the traditional {\em pinhole camera}. A projective camera sends a {\em world point} $x \in \PP^3$ to its {\em image} $Ax \in \PP^2$. 
The {\em center} of the camera $A$, which is also the center of the projection map represented by $A$, is the unique element $a \in \PP^3$ such that $Aa = 0$. 
The image of a collection of world points 
$\X = \{x_1, \ldots, x_n\} \subset \PP^3$, under camera $A$, is $(Ax_i)_{i=1}^n \in (\PP^2)^n$. This image is well-defined if and only if $Ax_i \neq 0$ which holds 
if and only if $x_i \neq a$. Therefore, when we speak of camera images of $\X$, keep in mind that the only cameras in play are those whose centers do not lie in $\X$.

Typically one expects that if there are two sets of ordered world points 
\begin{align}
    \X = \{x_1, \ldots, x_n\} \subset \PP^3 \,\,\,\,\textup{ and } \,\,\,\,
    \Y = \{y_1, \ldots, y_n\} \subset \PP^3
\end{align}
and two projective cameras $A,B \,:\, \PP^3 \dashrightarrow \PP^2$, with centers $a$ and $b$, 
then $(Ax_i)_{i=1}^n \in (\PP^2)^n$, the image of $\X$ under $A$,  and $(By_i)_{i=1}^n \in (\PP^2)^n$, the image of $\Y$ under $B$, are distinct. 
However, it can happen that the images are the same up to a projective transformation, i.e., there exists $H \in \SL(3)$ such that $Ax_i = HBy_i$ for $i=1, \ldots, n$ where $=$ denotes equality in projective space. Note that $Ax_i$ can be any nonzero scaling of $ HBy_i$ with possibly different scalars for each $i=1, \ldots, n$. 
We denote $\SL(3)$-equivalence by $\sim$ and write 
$(Ax_i)_{i=1}^n \sim (Bx_i)_{i=1}^n$ if there exists $H \in \SL(3)$ such that $Ax_i = HBy_i$ for all $i$, and can ask 
when such an ill-posed situation arises. 

{\em Let $\X = \{x_1,\dots,x_n\}$ and 
$\Y = \{y_1, \dots, y_n\}$ be two sets of $n$ ordered points in $\PP^3$.  When are there projective cameras $A:\PP^3\dashrightarrow \PP^2$ and $B:\PP^3 \dashrightarrow \PP^2$ such that $(Ax_i)_{i=1}^n \sim (By_i)_{i=1}^n$?}

A specialization of Theorem 2 in \cite{flatlandpaper}  answers this question.

\begin{theorem} \cite[Theorem 2]{flatlandpaper} 
\label{thm:twocam-existence-general}
    Let $\mathcal{X} = \{x_1,\dots,x_n\} \subset \PP^3$ and 
    $\mathcal{Y} = \{y_1, \dots, y_n\} \subset \PP^3$ be two sets of ordered points. There exists full-rank linear projections  $A:\PP^3\dashedrightarrow \PP^{2}$ and $B:\PP^3 \dashedrightarrow \PP^{2}$ such that $(Ax_i)_{i=1}^n \sim (By_i)_{i=1}^n$ if and only if there exists full-rank linear projections $A':\PP^{4}\dashedrightarrow \PP^3$ and $B':\PP^{4} \dashedrightarrow \PP^3$ with centers $a',b'$, and a set of points $\mathcal{Z} = \{z_1,\dots,z_n\}\subset  \PP^{4}$, none lying in the linear span of $a'$ and  $b',$ such that
    $\forall i, \, x_i = A'z_i \text{ and } y_i = B'z_i.$
\end{theorem}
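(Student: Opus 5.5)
Both directions reduce to linear algebra in homogeneous coordinates, organised around the fibre product of the two cameras. Throughout, equalities between points are projective equalities, and $\SL(3)$ may be replaced by $\mathrm{GL}(3)$, since scalars are absorbed.

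\emph{Reverse direction.} Suppose $A',B'$ and $\mathcal{Z}$ are given. The line $L=\langle a',b'\rangle\subset\PP^4$ is the kernel of the stacked map $z\mapsto (A'z,B'z)$. Choose coordinates on $\PP^4$ so that $a'=e_0$ and $b'=e_1$. Then $A'$ kills $e_0$ and $B'$ kills $e_1$, so the restrictions of $A'$ and $B'$ to the hyperplane spanned by $e_1,\dots,e_4$, respectively $e_0,e_2,e_3,e_4$, are isomorphisms onto $\PP^3$.
\begin{enumerate}
\item Set $c_A:=A'b'\in\PP^3$ and $c_B:=B'a'\in\PP^3$. These are nonzero, since $a'\neq b'$ and each projection has one-dimensional kernel.
\item Let $\pi:\PP^4\dashrightarrow\PP^2$ be the projection from $L$; it is defined on every $z_i$ by the hypothesis $z_i\notin L$.
\item Because $A'$ maps $L$ to the point $c_A$, the map $\pi$ factors as $\pi=A\circ A'$, where $A:\PP^3\dashrightarrow\PP^2$ is the projection from $c_A$. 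Explicitly, on the linear level $\ker(\pi)=L=A'^{-1}(\langle c_A\rangle)$.
\item Similarly $\pi=H B\circ B'$, where $B$ is the projection from $c_B$ and $H\in \mathrm{GL}(3)$ identifies the two quotient spaces $\CC^4/\langle c_B\rangle\cong \CC^5/L$.
\item Therefore $Ax_i=AA'z_i=\pi(z_i)=HBB'z_i=HBy_i$ for every $i$. These images are defined because $A'z_i\neq c_A$: otherwise $z_i$ would lie in $A'^{-1}(c_A)=L$.
\end{enumerate}

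\emph{Forward direction.} Suppose $Ax_i=\lambda_i HBy_i$. Replace $B$ by $HB$, so the condition becomes $Ax_i=\lambda_iBy_i$ with $\lambda_i\neq0$. Let $a,b$ be the camera centers, with lifts $\hat a,\hat b\in\CC^4$, and lift the points to $\hat x_i,\hat y_i$. Consider the fibre product
\[
W=\{(u,v)\in\CC^4\times\CC^4 : Au=Bv\}.
\]
\begin{enumerate}
\item Since $A$ and $B$ are surjective onto $\CC^3$, the space $W$ has dimension $8-3=5$, so $\PP(W)\cong\PP^4$.
\item Define $A'$ and $B'$ as the two coordinate projections $W\to\CC^4$.
\item The kernel of $A'$ is $\{(0,v):Bv=0\}=\langle(0,\hat b)\rangle$. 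This is one-dimensional, so $A'$ has rank $4$ and is full rank. Likewise $B'$ has kernel $\langle(\hat a,0)\rangle$ and is full rank.
\item Set $a'=(0,\hat b)$ and $b'=(\hat a,0)$. These span the two-dimensional subspace $\ker A\times\ker B$.
\item Put $z_i=(\hat x_i,\lambda_i\hat y_i)\in W$. Then $A'z_i=x_i$ and $B'z_i=y_i$ projectively.
\item Finally, $z_i\notin\langle a',b'\rangle=\ker A\times\ker B$, because $A\hat x_i\neq0$ (the image of $x_i$ is defined).
\end{enumerate}

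\emph{Main obstacle.} The only genuinely delicate point is the bookkeeping of the nondegeneracy conditions: the images must be well defined exactly when $z_i\notin L$. In the reverse direction one must also check that $A$ and $B$ are honest rank-$3$ projections whose centers avoid the $x_i$ and $y_i$. Both facts follow from $\ker(A'\oplus B')=L$, which I would verify first. Over $\RR$ the same argument works verbatim, since all constructions are linear.
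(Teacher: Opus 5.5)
The paper gives no proof of this statement. It quotes it as a specialization of \cite[Theorem 2]{flatlandpaper}, so there is no internal argument to compare your route against, and your proposal has to stand on its own.

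\textbf{Forward direction.} Your fibre-product argument is correct and complete.
\begin{itemize}
\item $W=\{(u,v):Au=Bv\}$ is $5$-dimensional because $A$ is surjective.
\item The two coordinate projections have one-dimensional kernels $\langle(0,\hat b)\rangle$ and $\langle(\hat a,0)\rangle$, so both are full rank.
\item The point $z_i=(\hat x_i,\lambda_i\hat y_i)$ lies off $\ker A\times\ker B$ because $A\hat x_i\neq 0$. The same fact forces $\lambda_i\neq0$, so $B'z_i=y_i$ projectively.
\end{itemize}

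\textbf{Reverse direction.} Projecting $\PP^4$ from $L=\langle a',b'\rangle$ and factoring this projection through $A'$ and $B'$ is also correct. This includes the check $A'^{-1}(\langle c_A\rangle)=L$, which keeps the new camera centers off $\X$ and $\Y$.

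\textbf{The gap.} In the reverse direction you write ``since $a'\neq b'$'' without justification. The statement as given does not exclude $a'=b'$. In that case $L$ is a single point, $c_A=A'b'=0$, and projection from $L$ lands in $\PP^3$ rather than $\PP^2$, so the construction breaks down. The case is easy but needs its own argument:
\begin{itemize}
\item If $\ker A'=\ker B'$, then $B'=GA'$ for some $G\in\mathrm{GL}(4)$, so $y_i=Gx_i$ for all $i$.
\item Take any rank-$3$ camera $A$ whose center $a$ is not in $\X$; this is possible because $\X$ is finite.
\item Set $B=AG^{-1}$. Its center is $Ga\notin G\X=\Y$, and $By_i=AG^{-1}Gx_i=Ax_i$ for every $i$.
\end{itemize}
Alternatively, state explicitly that you read the theorem as assuming $a'\neq b'$; the forward direction always produces distinct centers, so this reading is consistent. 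With this case handled, your argument is a complete, self-contained proof of the statement.
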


The triple $(\mathcal{Z},A',B')$ in~\Cref{thm:twocam-existence-general} is said to be a \emph{reconstruction} of $(\X , \Y )$.

Next we can ask for the locus of camera pairs $(A,B)$ for which $(Ax_i)_{i=1}^n \sim (By_i)_{i=1}^n$. 
To make this question precise we need to consider the action of $\SL(3)$ on both cameras and their images.
It is easy to check that a camera $A$ has center $a$ if and only if $HA$ has center $a$ for all $H \in \SL(3)$ \cite[Chapter 22.1]{hartley2003multiple}. Further, $y \sim Ax$ if and only if there exists $H \in \SL(3)$ such that $y=HAx$. Therefore, the $\SL(3)$-orbit of $Ax$ is precisely the set of images of $x$ under cameras in the $\SL(3)$-orbit of $A$. 

If we have a collection of world points  
$\X = \{x_1, \ldots, x_n\} \subset \PP^3$ then its image under camera $A$  
with center $a$ is $(Ax_i)_{i=1}^n \in (\PP^2)^n$,  and as above, the $\SL(3)$-orbit 
of $(Ax_i)_{i=1}^n$ consists of the images of $\X$ under all cameras with center $a$. Similarly, if $\Y = \{y_1, \ldots, y_n\} \subset \PP^3$ is another collection of world points imaged under camera $B$ with center $b$, then the $\SL(3)$-orbit of 
$(By_i)_{i=1}^n$ consists of the images of $\Y$ under all cameras with center $b$. The requirement that $(Ax_i)_{i=1}^n \sim (By_i)_{i=1}^n$ is exactly that the $\SL(3)$-orbits of $(Ax_i)_{i=1}^n$ and $(By_i)_{i=1}^n$ coincide. 
Since these image orbits are controlled by camera centers, and not camera matrices, the question we consider in this paper is the following. 
\vspace*{-0.15cm}
\begin{question*}
Given two sets of ordered points $\X = \{x_1,\dots,x_n\}$ and $\Y = \{y_1, \dots, y_n\}$  in $\PP^3$, what is the locus of cameras centers $(a,b) \in \PP^3 \times \PP^3$ such that 
$(Ax_i)_{i=1}^n \sim (By_i)_{i=1}^n$? 
(We call the Zariski closure of the locus of camera centers, the {\em centers-variety}.)
\end{question*}
\vspace{-0.15cm}
    Since the question is about the $\SL(3)$-equivalence of two images, which are necessarily well-defined, the locus of centers is in $ \left(\PP^3 \setminus \X \right) \times\left(\PP^3 \setminus \Y \right)$. However, its Zariski closure, which is the centers-variety, may well contain points in $(\X \times \PP^3) \cup (\PP^3\times\Y)$, and indeed it does for $n \leq 6$. 

{\bf The main results.} In this paper we give a complete answer to the above Question using classical invariant theory and algebraic geometry.
We summarize our main results at a high level in the following theorem. More nuanced statements and much more details can be found in \S~\ref{sec:n=5}--\ref{sec:n=7}.
\vspace{-0.1cm}
\begin{theorem}
Let $\X = \{x_1,\dots,x_n\}$ and $\Y = \{y_1, \dots, y_n\}$  be two sets of $n$ generic ordered points in $\PP^3$ such that there exist projective cameras $A$ and $B$ with centers $a$ and $b$ such that $(Ax_i)_{i=1}^n \sim (By_i)_{i=1}^n$.
    \begin{enumerate}
        \item If $n \leq 4$, the centers-variety is all of $\PP^3 \times \PP^3$ while if $n \geq 8$, generically the centers-variety is empty.
        \item If $n=5$ then the centers-variety is a rational $4$-fold in $\PP^3 \times \PP^3$. If $a$ is chosen generically, then $b$ lies on a unique twisted cubic curve that passes through $\Y$. A symmetric statement holds if $b$ is chosen generically. 
        \item If $n=6$ then the centers-variety is a smooth surface in $\PP^3 \times \PP^3$ that is isomorphic to a quadric in $\PP^3$ blown up at six points. Both camera centers $a$ and $b$ lie on quadric surfaces obtained as projections of the centers-variety. If $a$ is fixed then $b$ is uniquely determined, and vice-versa. 
        \item If $n=7$ then the centers-variety  consists of three points in $\PP^3 \times \PP^3$.
    \end{enumerate}
\end{theorem}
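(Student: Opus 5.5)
The plan is to reduce everything to a comparison of points in the moduli space $\M_n := (\PP^2)^n /\!/ \SL(3)$ of ordered $n$-tuples in the plane. Projection from a center $a\notin\X$ gives a rational map $\pi_\X : \PP^3 \dashrightarrow \M_n$, $a\mapsto [(Ax_i)_i]$, and similarly $\pi_\Y$. The centers-variety is then the closure of the fibre product $\{(a,b) : \pi_\X(a)=\pi_\Y(b)\}$. Its expected dimension is $3+3-\dim \M_n = 6-(2n-8) = 14-2n$ for $n\ge 4$. This count already predicts every case of the statement: $\PP^3\times\PP^3$ for $n\le 4$ (where $\M_n$ is a point), a $4$-fold for $n=5$, a surface for $n=6$, finitely many points for $n=7$, and generically nothing for $n\geq 8$. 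For $n\le 4$ I use that generic points $x_1,\dots,x_4$ can be sent to a fixed frame, and that any $4$ image points in general position are $\SL(3)$-equivalent; so every pair $(a,b)$ off the bad locus works. For $n\ge 8$, the images $\pi_\X(\PP^3)$ and $\pi_\Y(\PP^3)$ are $3$-folds in $\M_n$ of dimension $\ge 8$, and for generic $\X,\Y$ they do not meet. I would show this either by counting dimensions of the incidence variety, or via \Cref{thm:twocam-existence-general}: a reconstruction $(\Z,A',B')$ imposes $\dim$ conditions that generic pairs $(\X,\Y)$ with $n\ge 8$ fail. The hypothesis that some pair exists is what makes the fibre product nonempty, so in cases 2–4 the expected dimension is actually attained.

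For $n=5$, I use the classical fact that the image of $x_1,\dots,x_5$ from $a$ is projectively equivalent to the image from $a'$ exactly when $a,a'$ lie on a common twisted cubic through $x_1,\dots,x_5$. The twisted cubics through $5$ points form a $2$-parameter family, which is exactly the fibres of $\pi_\X:\PP^3\dashrightarrow\M_5\cong$ (a rational surface). Thus $\pi_\X$ is dominant with twisted-cubic fibres. For generic $a$, the point $\pi_\X(a)$ has fibre under $\pi_\Y$ equal to a unique cubic through $\Y$. The centers-variety is a $\PP^1\times\PP^1$-type family over the rational surface $\M_5$, hence a rational $4$-fold.

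For $n=6$, I would make $\M_6$ concrete with invariants (a $4$-fold). Its fibres under $\pi_\X$ are generically points, because the quadrics through six points and the Cremona-type identifications pin down $a$. The image $\pi_\X(\PP^3)$ is a $3$-fold, and the intersection $\pi_\X(\PP^3)\cap\pi_\Y(\PP^3)$ is a surface $S$. I expect $S$ to pull back to a quadric through $\X$ (respectively $\Y$) and to give an isomorphism of these quadrics. The centers-variety then becomes the graph of this birational correspondence, and resolving its base points at the six points gives the blow-up of a quadric at six points. Proving smoothness and identifying the exact quadric will use the Coble/Joubert invariants of six points. For $n=7$, $\M_7$ is $6$-dimensional and the two $3$-folds meet in finitely many points. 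Counting them requires an explicit model, for which I would use the Goepel-variety parametrization and compute the intersection degree, aiming for $3$.

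I expect the main obstacle to be case $n=7$. Showing the count is exactly three, and not merely finite, needs a degree or intersection computation in a specific compactification; the answer there must also remove spurious solutions at the boundary (centers in $\X$ or $\Y$). The $n=6$ smoothness and blow-up identification is the second hardest step.
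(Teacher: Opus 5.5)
Your cases $n\le 4$ and $n=5$ match the paper. The ``classical fact'' you invoke is exactly what the paper proves via association $\M_2^5\cong\M_1^5$ and Proposition~\ref{prop:Kap}. Rationality of the $4$-fold still needs a reason, e.g.\ the paper's parametrization by $\PP^3\times\PP^1$, or the sections given by the points of $\X$. The genuine gaps are at $n=6$ and $n=7$, where you state expectations rather than arguments.

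For $n=6$, nothing in your sketch forces $a$ onto a quadric. Note also that the quadrics through $\X$ alone give the $2{:}1$ map $\eta$ of \S\ref{subsec:Weddle}, so they do not ``pin down'' $a$. The missing idea is the following:
\begin{itemize}
\item The degree-one invariants $t_0,\dots,t_4$ of $\M_2^6$, evaluated on the projection from $a$, are up to a common scalar the quadrics $q_i^{\X}(a)$ obtained by $[ijk]\mapsto[ijk\,a]$, all vanishing on $\X$.
\item Quadrics through six points form a $4$-dimensional space, so there are relations $\sum\alpha_iq_i^{\X}\equiv0$ and $\sum\beta_iq_i^{\Y}\equiv 0$.
\item Equivalence of the images implies $(q_i^{\X}(a))_i=(q_i^{\Y}(b))_i$ (Theorem~\ref{thm:pull back tool}). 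This forces $\sum\beta_iq_i^{\X}(a)=0$ and $\sum\alpha_iq_i^{\Y}(b)=0$.
\end{itemize}
In your language, $\pi_{\X}(\PP^3)$ lies over the hyperplane $\sum\alpha_it_i=0$ under the double cover $\M_2^6\to\PP^4$ forgetting $t_5$.

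Moreover, your expectation that $S$ induces an \emph{isomorphism} of the two quadrics is wrong and clashes with your next sentence. An isomorphism has no base points, and its graph is a smooth quadric, not a quadric blown up in six points. The correspondence $S_\beta\dashrightarrow S_\alpha$ is only birational: it is given by $4L_1+7L_2-3\sum E_i$, blowing up $\X$ and contracting six twisted cubics onto $\Y$. You give no argument for this or for smoothness. The paper obtains both from explicit computation (degree $26$, fibres of both projections) and from identifying the embedding system $5L_1+8L_2-3\sum E_i$.

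For $n=7$ you misplace the spurious solutions. Centers in $\X$ or $\Y$ are excluded from the outset, as is $2\X$, since the Fano coordinates vanish when two image points coincide (Lemma~\ref{lem:Fano}). The actual extra solution is the highly singular point $\omega=(1,\dots,1)$ of $\mathcal{G}'$. By Proposition~\ref{prop:fano image of projn from weddle}, the Fano map sends every configuration of seven points on a conic to $\omega$, i.e.\ it contracts a copy of $\M_1^7$, which has four moduli. Projecting from any point of the Weddle curve of $\X$ (vertices of quadric cones through $\X$) puts the seven image points on a conic, and likewise for $\Y$. So $T_{\X}\cap T_{\Y}$ contains $\omega$, with multiplicity $8$, although these pairs $(a,b)$ are in general not equivalent. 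An intersection-degree count on $\mathcal{G}'$ therefore cannot return $3$. One must compute $T_{\X}\cap T_{\Y}$ and recognize and discard $\omega$, which is exactly the step the paper carries out and your proposal lacks.

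Finally, your incidence count for $n\ge 8$ is a valid alternative to the paper's deduction from $n=7$, and is independent of it. It requires proving that $(a,\X)\mapsto[(Ax_i)_i]$ has fibres of dimension exactly $n+11$ over stable classes: $3$ for $a$, $8$ for the orbit, and one along each of the $n$ lines. (For generic $\X$ and $n\ge 7$, every $a\notin\X$ gives a stable image.) Then the incidence variety has dimension $4n+14<6n$. That upper bound, not the existence of one pair $(a,b)$, is also what turns the lower bound of Theorem~\ref{thm:loci} into the equality $\dim=14-2n$ for $n=5,6,7$.
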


A simpler scenario is that of two sets of ordered points in $\PP^2$ being mapped by 
{\em flatland cameras} to the same image in $\PP^1$. The answers in this setting  also rely on classical invariant theory and algebraic geometry \cite{flatlandpaper}, and the paper describes connections of the Question to computer vision.

 {\bf Our central tool.} The  question naturally suggests the idea of using the invariant theory of ordered points in $\PP^2$ under the action of $\SL(3)$, and a moduli space of $n$ ordered points in $\PP^2$ modulo $\SL(3)$. Moduli spaces can be constructed using {\em Geometric Invariant Theory} \cite{mumford-invarianttheory} which allows for several options based on the choice of a {\em polarization}. We adopt the moduli space constructed in \cite{DolgachevGIT} and \cite{dolgachev-ortland} (denoted as $\M_2^n$), which uses the most natural polarization. This choice puts further restrictions on where the locus of camera centers can lie.  
To be precise, for $n= 6$ we consider $(a,b) \in \left(\PP^3\setminus\X\right)\times\left(\PP^3\setminus\Y\right)$ and 
the centers-variety is its Zariski closure in $\PP^3\times\PP^3$.
For $n=5,7$ we will also need to avoid all lines through pairs of world points when considering the locus of camera centers $(a,b)$. The precise details will be explained in the corresponding sections. 

{\bf Organization of the paper.}
In \S~\ref{sec:Mdn} we collect facts about moduli spaces of $n$ ordered points in $\PP^2$ with special emphasis on the cases of $n=5,6,7$. We also explain the phenomenon of {\em association}, which is a useful tool in this paper. 
In the remaining sections we answer the Question.

We begin in \S~\ref{sec:overview thm} with a birds-eye view of what to expect for the answer to the Question. Theorem~\ref{thm:loci} gives a lower bound of $14-2n$ for the dimension of centers-variety when $n=5,6,7$. In the remaining sections, we will see that the centers-variety has dimension exactly $14-2n$ when $n=5,6,7$. Theorem~\ref{thm:loci} also proves that the centers-variety is all of $\PP^3 \times \PP^3$ when $n=4$. 

In \S~\ref{sec:n=5} we prove that when $n=5$, the centers-variety is a rational $4$-fold in $\PP^3 \times \PP^3$. If one camera center is chosen generically,  then the other camera center lies on a unique twisted cubic. We give synthetic and computational proofs of this result, and an explicit description of the centers-variety. 

The case of $n=6$ point pairs is treated in \S~\ref{sec:n=6}. We prove that the centers-variety is isomorphic to a smooth quadric surface blown up in six points. Each camera center is constrained to lie on the projection of this surface into the corresponding $\PP^3$. If $a$ is chosen on its surface then $b$ is a unique point on its surface and vice-versa. Again, the 
centers-variety is described in detail. 

We finish in $\S~\ref{sec:n=7}$ by showing that the centers-variety consists of three distinct points when $n=7$ and that it is generically empty when $n \geq 8$. While in the previous sections we rely on a known set of invariants that can separate orbits, no such set is available in the literature when $n=7$. We bypass this difficulty by working with the {\em Goepel variety} which is birationally equivalent to the moduli space of seven ordered points in $\PP^2$, and identify a set of invariants to answer our Question.
A byproduct of our analysis is the explicit construction of 
$630={36 \choose 2}$ singular lines on the Goepel variety, forming a complete graph on $36$ vertices which are highly singular points. 

Our results rely on, or were inspired by, symbolic computations in the software package Macaulay2\cite{M2}. All computations can be found in the ancillary files of the arXiv submission of this paper. 

{\bf Acknowledgments.} We thank Timothy Duff for a crucial observation in the case of $n=7$ point pairs and we thank Sameer Agarwal and Igor Dolgachev for helpful discussions. 
Giorgio Ottaviani is a member of GNSAGA-INDAM and is supported by the European Union’s HORIZON–MSCA
2023-DN-JD program under the Marie Sklodowska-Curie
 Actions, grant agreement 101120296 (TENORS). Rekha Thomas 
 thanks the Department of Mathematics and Computer Science at the University of Florence for supporting her as a visitor in Autumn 2025.

\section{Moduli spaces of ordered points in $\PP^d$}\label{sec:Mdn}

\subsection{The GIT construction of the moduli space $\M_d^n$}
Geometric Invariant Theory (GIT) \cite{mumford-invarianttheory} allows one to construct moduli spaces of projective varieties under the action of  reductive groups. 
The moduli space $\M_d^n$ of $n$ ordered points in $\PP^d$, under the action of $\SL(d+1)$, can be constructed using GIT. We sketch the ideas and refer to \cite[Chapters I, II]{dolgachev-ortland}, 
\cite[Chapter 11]{DolgachevGIT} and \cite{mumford-invarianttheory} for details. A GIT-quotient requires the choice of a {\em polarization}, and in \cite{dolgachev-ortland}, Dolgachev and Ortland 
choose the minimal one which is homogeneous/democratic in the points.

\begin{definition}\label{def:l}Let $l$ be the smallest positive integer satisfying $ln=w(d+1)$ for some (positive) integer $w$.
\end{definition}
Note that when $d=2$, the case we are most interested in, the smallest $l$ for which $ln/3$ is a positive integer, is at most $3$. More precisely,  
\begin{align} 
l=\left\{\begin{array}{lr}1&\textrm{\ if\ } n=0\textrm{\ mod\ }3\\
3&\textrm{\ if\ } n\neq 0\textrm{\ mod\ }3\end{array}\right.
\end{align}
Consider the graded ring
\begin{align} 
R=\oplus_m (\underbrace{
\sym^{ml}\CC^{d+1}\otimes\ldots\otimes\sym^{ml}\CC^{d+1}}_{n\textrm{\ times}}) \,\,\textup{ with } \,\,\mathrm{Proj}(R)=\underbrace{\PP^d\times\ldots\times\PP^d}_{n\textrm{\ times}} = (\PP^d)^n.
\end{align}
The group $\SL(d+1)$ acts on $R$ as follows.
For $H \in \SL(d+1)$,  
$f \in R$ and $p=(p_1, \ldots, p_n) \in (\PP^d)^n$, 
\begin{align}
    (Hf)(p_1, \ldots, p_n) = f(H^{-1}p_1, \ldots, H^{-1}p_n). 
\end{align}
The polynomial $f$ is {\em invariant} under this action if 
$Hf = f$ for all $H$. The set of all invariant polynomials is a subring of $R$. This invariant subring $R^{\SL(d+1)} =:R^n_d$ inherits the grading in $R$ 
and is the homogeneous coordinate ring  of $n$ ordered points in $\PP^d$ under the action of $\SL(d+1)$.  Therefore, the moduli space of $n$ ordered points in $\PP^d$ under $\SL(d+1)$-action is 
$$\M_d^n := \mathrm{Proj}(R^n_d).$$

The first fundamental theorem of invariant theory (\cite[Prop. 2 pp. 9]{dolgachev-ortland}, \cite[Theorem 28]{OttavianiLectures}) claims that $R_d^n$ is generated by {\em bracket functions} (sometimes also called {\em standard monomials}). Readers who are unfamiliar with GIT
may be helped by the following examples of $\M_2^5$, $\M_2^6$ and $\M_2^7$, the moduli spaces of $n=5,6,7$ ordered points in $\PP^2$, modulo the action of $\SL(3)$.

\subsubsection{{The moduli space $\M_2^5$}} 
When $n=5$, the integer $l$ from 
Definition \ref{def:l} is $3$, and so,
\begin{align}
R_2^5=\oplus_m (\underbrace{
\sym^{3m}\CC^{3}\otimes\ldots\otimes\sym^{3m}\CC^{3}}_{5\textrm{\ times}})^{\SL(3)}.
\end{align} 
It is a nontrivial fact that the dimension of the $m$-th summand of $R_2^5$ 
is 
\begin{equation}\label{eq:rr5}
\frac{1}{2}(5m^2+5m+2).\end{equation}
There are several indirect proofs of this formula in the literature; it is the Hilbert polynomial of the quintic Del Pezzo surface in its anticanonical embedding.

The first summand of $R_2^5$ (when $m=1$) is $6$-dimensional, and a generating set for it is given by 
\begin{equation}\label{eq:firstgen5points}g_0=[124][134][135][235][245]\end{equation}
and five permutations of it. 
The notation $[ijk]$ denotes the bracket (determinant)
\begin{align} 
\det \begin{bmatrix}p_{i0}&p_{i1}&p_{i2}\\
p_{j0}&p_{j1}&p_{j2}\\
p_{k0}&p_{k1}&p_{k2}\end{bmatrix}
\end{align}
where $p_t = (p_{t 0}:p_{t 1}:p_{t 2})$
is the $t$th point in the collection of five points in $\PP^2$.
For example, we could take the following five permutations of $g_0$ to obtain the generating set $\{ g_0, \ldots, g_5 \}$. 
\begin{align} \label{eq:gens5points}
    \begin{array}{c|c}
\textup{permutation} & \textup{invariant}\\
\hline
(45) & [125][135][134][234][245] = g_1\\
(34) & [123][134][145][245][235] = g_2\\
(345) & [125][145][134][234][235] = g_3\\
(354) & [123][135][145][245][234] = g_4\\
(35) & [124][145][135][235][234] = g_5
\end{array}
\end{align}
Note that each point $p_t$ appears exactly three times in each $g_i$, and this is the number $l$ chosen in Definition \ref{def:l}. Hence the bracket functions $g_i$ are multihomogeneous of degree $(3,3,3,3,3)$ in the coordinates of the five points.

The six functions $g_0, \ldots, g_5$ generate all of $R^5_2$ and it is nontrivial that five quadratic (Pluecker) relations
generate the relations among them. Indeed, 
$\M_2^5$ is isomorphic to the variety
of $4$-Pfaffians of a $5\times 5$ matrix with general linear entries in six homogeneous variables. This is known to be the Del Pezzo surface of degree $5$, i.e., a surface obtained by blowing up four points in $\PP^2$, no three of which are in a line \cite[pp 31]{dolgachev-ortland}, and $(\ref{eq:rr5})$ is the Riemann-Roch theorem applied to this surface.

The Hilbert series of $R_2^5$ is
\begin{align}
    \begin{split}
        \sum_{m=0}^\infty\frac{1}{2}(5m^2+5m+2)t^m = \sum_{m=0}^\infty\left(5{{m+2}\choose 2}-5(m+1)+1\right)t^m \\
        =\frac{5}{(1-t)^3}-
\frac{5}{(1-t)^2}+\frac{1}{1-t}=
\frac{1+3t+t^2}{(1-t)^3}=
\frac{1-5t^2+5t^3-t^5}{(1-t)^6}
    \end{split}
\end{align}
where the coefficients of the numerator in the last expression are the alternating Betti numbers in the resolution of $\M_2^5$. In Macaulay2 \cite{M2} notation, they are displayed as $$\begin{matrix}
       & 0 & 1 & 2 & 3\\
      \text{total:} & 1 & 5 & 5 & 1\\
      0: & 1 & . & . & .\\
      1: & . & 5 & 5 & .\\
      2: & . & . & . & 1
      \end{matrix}$$
The denominator represents the six bracket generators $g_0, \ldots, g_5$. 

\subsubsection{The moduli space $\M_2^6$} \label{subsec:M26}
The moduli space $\M_2^6$ is simpler than $\M_2^5$
since the integer $l$ from Definition \ref{def:l} is $1$, and therefore, 
$$R_2^6=\oplus_m (\underbrace{
\sym^{m}\CC^{3}\otimes\ldots\otimes\sym^{m}\CC^{3}}_{6\textrm{\ times}})^{\SL(3)}.$$
By \cite[Chapter I.3, Example 3]{dolgachev-ortland}, the dimension of the $m$-th summand of $R_2^6$ is
\begin{equation}\label{eq:rr6}\frac{1}{12}(m^4+6m^3+17m^2+24m)+1.\end{equation}
The Hilbert series of $R_2^6$, computed in (\cite[Chapter I.3, Example 3]{dolgachev-ortland}, is 
\begin{align}
   \sum_{m=0}^\infty\left(\frac{1}{12}(m^4+6m^3+17m^2+24m)+1\right)t^m = 
\frac{1-t^4}{(1-t)^5(1-t^2)} 
\end{align}
which suggests that the moduli space $\M_2^6$ is isomorphic
 to a hypersurface of dimension $4$ and degree $4$ in the weighted projective space $\PP(1,1,1,1,1,2)$.
 Indeed, the invariant ring $R_2^6$
 is generated by the following five bracket functions of degree $1$: 
 \begin{align} \label{eq:degree 1 gens R26}
 t_0=[123][456], \quad t_1=[124][356], \quad
t_2=[125][346], \quad t_3=[134][256], \quad
t_4=[135][246]
 \end{align}
 and the following bracket function of degree $2$: 
 \begin{align} \label{eq:degree 2 gen R26}
t_5=[123][145][246][356]-[124][135][236][456], 
\end{align}
see \cite[Chapter I.3, Example 3]{dolgachev-ortland} or \cite[Example 11.7]{DolgachevGIT}. The six generators satisfy a relation $t_5^2=F(t_0,\ldots, t_4)$, where the 
polynomial $F$ is of degree $4$ and vanishes exactly when the six points lie on a conic. The involution
 $t_5\mapsto -t_5$ is called a {\em Gale transform}. See \S~\ref{sec:n=6} for more details. 

\subsubsection{The moduli space $\M_2^7$} When $n=7$, 
the integer $l$ in
Definition \ref{def:l} is again $3$, and so,
$$R_2^7=\oplus_m (\underbrace{
\sym^{3m}\CC^{3}\otimes\ldots\otimes\sym^{3m}\CC^{3}}_{7\textrm{\ times}})^{\SL(3)}.$$
The degree $1$ summand of $R_2^7$ is $15$-dimensional. We have not found  an explicit formula for
the dimension of the $m$-th summand in the literature, which is likely computable but not trivial. A standard relaxation of this problem is to consider the subring of $R_2^7$ generated by the degree $1$ summand. This defines a  quotient of $\M_2^7$ called the {\em Goepel variety}, a $6$-fold embedded in $\PP^{14}$, and is enough to describe and separate general orbits. The Goepel variety is also sufficient for our computer vision problem and we describe it in detail in \S~\ref{sec:goepel}.

\subsection{Stability in $\M_d^n$}
The inclusion 
$R^{\SL(d+1)} \subset R$, considered at the beginning of \S~\ref{sec:Mdn}, induces the quotient map 
$\mathrm{Proj}(R)\to \mathrm{Proj}(R^{\SL(d+1)})$ which we denote in our case by 
\begin{align}
\pi_{d,n}\colon\underbrace{\PP^d\times\ldots\times\PP^d}_{n\textrm{\ times}}\dashrightarrow \M_d^n.
\end{align}
The key point of GIT is to refine this map to obtain a parameter space for orbits with good properties. 
First of, $\pi_{d,n}$ is a rational map that is undefined exactly on the $n$-tuples at which all invariant functions vanish. An $n$-tuple $p=(p_1,\ldots, p_n)$ 
is {\em semistable} if at least one invariant polynomial is nonzero at $p$. The set of semistable $n$-tuples is denoted as
$$(\underbrace{\PP^d\times\ldots\times\PP^d}_{n\textrm{\ times}})^{ss}.$$ 
Note that $\pi_{d,n}$ is well defined (it is a morphism) exactly on the semistable $n$-tuples.
The set of semistable points contains the set of {\em stable} $n$-tuples, (see \cite{dolgachev-ortland}, \cite{mumford-invarianttheory} for details); those whose orbits are closed and whose stabilizers are finite. Two stable orbits can be distinguished by invariants or, in other words, for a stable $p=(p_1,\ldots, p_n)$ 
the fiber $\pi_{d,n}^{-1}\pi_{d,n}(p)$ contains a single stable orbit. The {\em Hilbert-Mumford criterion} \cite[Theorem 2.1]{mumford-invarianttheory} allows one to recognize stable and semistable points. The following result proven in 
\cite[Chapter II.2, Theorem 1, pp. 23]{dolgachev-ortland} is obtained by applying the Hilbert-Mumford criterion in our setting. 

\begin{theorem} \label{thm:Hilbert-Mumford criterion}
An $n$-tuple $(p_1,\ldots, p_n)\in 
\underbrace{\PP^d\times\ldots\times\PP^d}_{n\textrm{\ times}}$ 
is semistable if and only if for any proper subset $\{i_1,\ldots, i_k\}\subseteq
\{1,\ldots, n\}$, letting $\langle p_{i_1},\ldots, p_{i_k}\rangle$ denote the projective span of $p_{i_1}, \ldots, p_{i_k}$, 
\begin{align}\label{eq:ineqss}
\dim\langle p_{i_1},\ldots, p_{i_k}\rangle +1\ge\frac{k(d+1)}{n}.
\end{align}
Stable $n$-tuples are those for which the inequality is strict. 
\end{theorem}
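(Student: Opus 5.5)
The plan is to apply the Hilbert--Mumford criterion \cite[Theorem 2.1]{mumford-invarianttheory} to the diagonal action of $\SL(d+1)$ on $(\PP^d)^n$, linearized by the line bundle $\OO(l,\ldots,l)$, and reduce it to a combinatorial weight count. Recall the criterion: $p$ is semistable (resp.\ stable) iff for every nontrivial one-parameter subgroup $\lambda\colon\mathbb{G}_m\to\SL(d+1)$ the Mumford weight $\mu(p,\lambda)$ is $\ge 0$ (resp.\ $>0$), with the sign convention fixed so that negativity means the limit destabilizes. Since the linearization is the same power $l$ on every factor, $\mu(p,\lambda)=l\sum_{i=1}^n\mu(p_i,\lambda)$. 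Up to conjugation, $\lambda$ is diagonal, $\lambda(t)=\mathrm{diag}(t^{r_0},\ldots,t^{r_d})$ with $r_0\ge r_1\ge\cdots\ge r_d$ and $\sum r_j=0$. For a single point, $\mu(p_i,\lambda)=-\min\{r_j : p_{ij}\neq 0\}$ in the chosen coordinates. Conjugating is the same as choosing a complete flag $V_0\subset V_1\subset\cdots\subset V_d=\PP^d$, where $V_j$ is spanned by the first $j+1$ coordinate vectors. Then $\min\{r_j:p_{ij}\ne0\}=r_{j(i)}$, where $j(i)$ is the least $j$ with $p_i\in V_j$.

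Next I linearize in $r$. The weights $r$ with $\sum r_j=0$ and $r$ nonincreasing form a cone generated by the vectors
\[
r^{(k)}=(\underbrace{d+1-k,\ldots,d+1-k}_{k},\underbrace{-k,\ldots,-k}_{d+1-k}),\qquad k=1,\ldots,d.
\]
For a fixed flag, the quantity $\sum_i r_{j(i)}$ is linear in $r$ on this cone, so it suffices to check the extremal rays. For $r^{(k)}$, a point contributes $d+1-k$ if it lies in $V_{k-1}$, and $-k$ otherwise. Let $N_k=\#\{i: p_i\in V_{k-1}\}$. The condition $\sum_i r_{j(i)}\le 0$ (i.e.\ $\mu\ge0$) becomes
\[
N_k(d+1-k)-(n-N_k)k\le 0,\quad\text{i.e.}\quad N_k\le \frac{kn}{d+1}.
\]

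Finally, I translate this into the statement about spans. Take any subset $I$ with $|I|=k'$ whose span $L=\langle p_i:i\in I\rangle$ has dimension $s$. Choose a flag with $V_s=L$. Then $N_{s+1}\ge k'$, so the condition forces $k'\le (s+1)n/(d+1)$, which is exactly (\ref{eq:ineqss}) after rearranging. Conversely, for any flag, let $I$ be the set of points lying in $V_{k-1}$. The span of $I$ has dimension at most $k-1$, so (\ref{eq:ineqss}) for $I$ gives $N_k\le(\dim\langle I\rangle+1)n/(d+1)\le kn/(d+1)$. Here I must handle two edge cases: $I$ empty, which is trivial, and $I$ equal to all points, where the span is all of $\PP^d$ but $k\le d$, so $N_k=n\le kn/(d+1)$ would be false. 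The fix is that the inequality is demanded for proper subsets, and if all points lie in a proper $V_{k-1}$ then some single point or smaller subset already violates the condition.

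The stable case is identical with strict inequalities: $\mu>0$ for every nontrivial $\lambda$ corresponds to a strict inequality on each ray. This gives strictness of (\ref{eq:ineqss}) for all nonempty proper subsets. The main obstacle is justifying the reduction to extremal rays. I would argue it by noting that $\min\{r_j:p_{ij}\neq 0\}=r_{j(i)}$ is linear in $r$ on the ordered chamber once the flag is fixed, so the sign can be checked on generators.
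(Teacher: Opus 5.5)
Your route is the one the paper has in mind. The paper gives no argument of its own: it cites Dolgachev--Ortland and notes that the result comes from the Hilbert--Mumford criterion. Your weight computation is the standard way of carrying that out: diagonalize $\lambda$, order the weights, observe linearity on the Weyl chamber for a fixed flag, and reduce to the extremal weights $r^{(k)}$. Together with the sign conventions, it correctly yields $\#\{i: p_i\in L\}\le \frac{n}{d+1}(\dim L+1)$ for every proper linear subspace $L\subset\PP^d$ (strict for stability). Two small points in the translation to subsets: a subset whose span is all of $\PP^d$ has no associated flag condition, so say explicitly that the inequality is trivial there; and you correctly restrict the strict inequality to nonempty subsets.

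The step that fails as written is the edge case where all $n$ points lie in some proper $V_{k-1}$. A single point never violates (\ref{eq:ineqss}) once $n\ge d+1$, since $1\ge (d+1)/n$. What works is to take any $n-1$ of the points. Their span lies in $V_{k-1}$, so its dimension plus one is at most $k\le d$, while $(n-1)(d+1)/n>d$ exactly when $n\ge d+2$. For $n=d+1$ your claim is false, and so is the statement with ``proper subset''. For example, three distinct collinear points in $\PP^2$ satisfy (\ref{eq:ineqss}) for every proper subset, yet the invariant ring is generated by $[123]$, which vanishes on them. So you must either assume $n\ge d+2$, which is the only range the paper uses, or impose the inequality also for the full index set in the semistable case. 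The stable case is fine for all $n\ge 2$ by the same $(n-1)$-subset argument, since the non-strict inequality $d\le (n-1)(d+1)/n$ only needs $n\ge d+1$.
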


 By \cite[Corollary, pp. 25]{dolgachev-ortland}, 
 semistability is equivalent to stability when $\textup{gcd}(n,d+1)=1$. Together with \cite[Theorem 2, pp. 28]{dolgachev-ortland}, we get that when $n \geq d+2$ and $\textup{gcd}(n,d+1)=1$, 
  the moduli space $\M_d^n$ is smooth and of dimension $d(n-d-2)$.
For any $n\ge d+2$, stable $n$-tuples project to smooth points of $\M_d^n$ under $\pi_{d,n}$.
In this paper we are interested in $\M_2^n$ whose 
dimension is $2n-8$ for $n\ge 4$. One can see this dimension count 
directly from the fact that $\M_2^n$ is the quotient of $(\PP^2)^n$ by $\SL(3)$; there are $2n$ parameters in $(\PP^2)^n$ and $8$ in 
$\SL(3)$. 

Another consequence of Theorem~\ref{thm:Hilbert-Mumford criterion} is that it is possible to give a geometric characterization of stable and semistable $n$-tuples in $(\PP^2)^n$ under $\SL(3)$-action. 

 \begin{corollary} \cite[Chapter VII, pp. 114]{dolgachev-ortland} For $p=(p_1,\ldots, p_5)\in\underbrace{\PP^2\times\ldots\times\PP^2}_{5\textrm{\ times}} $,  the following conditions are equivalent
 \begin{itemize}
 \item $p$ is semistable.
 \item $p$ is stable.
 \item The points $p_1,\ldots, p_5$ are distinct and 
 at most $3$ among them are collinear.
 \end{itemize}
 \end{corollary}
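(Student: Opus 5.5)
We would derive the corollary directly from Theorem~\ref{thm:Hilbert-Mumford criterion} with $d=2$, $n=5$. Here $\gcd(5,3)=1$, so for every proper subset of size $k$ the quantity $3k/5$ is never an integer when $1\le k\le 4$. Hence the inequality $\dim\langle p_{i_1},\ldots,p_{i_k}\rangle+1\ge 3k/5$ can never be an equality. This gives the equivalence of the first two conditions, consistent with the cited Corollary of Dolgachev--Ortland. It remains to show that semistability (equivalently stability) is the same as the geometric condition in the third bullet.

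We then run through the proper subset sizes $k=1,2,3,4$ and read off the constraint.

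For $k=1$ the bound is $3/5$, and $\dim\langle p_i\rangle+1=1$, so there is no condition.

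For $k=2$ the bound is $6/5$. The span of two points has dimension $0$ or $1$, so we need dimension $1$; that is, the two points must be distinct. Applying this to all pairs gives pairwise distinctness.

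For $k=3$ the bound is $9/5$, so the span must have dimension at least $1$. This means three points must not all coincide, which is already implied by the $k=2$ condition.

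For $k=4$ the bound is $12/5$, so $\dim+1\ge 3$ and the four points must span $\PP^2$. In other words, no four of the points are collinear.

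Conversely, if the points are distinct and no four are collinear, each of these inequalities holds (strictly, by the non-integrality noted above). Combining the cases: $p$ is semistable if and only if the $p_i$ are distinct and at most three of them are collinear.

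The only delicate point is the bookkeeping of which subsets count. The criterion quantifies over proper subsets, so $k=5$ imposes nothing; a configuration with all five points collinear is already excluded at $k=4$. No real obstacle is expected. The main care is checking that the $k=3$ condition adds nothing beyond the $k=2$ one, so that the third bullet says ``at most $3$ collinear'' rather than something stronger like ``no three collinear''.
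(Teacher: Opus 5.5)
Your proof is correct and follows the paper's own route: the paper cites Dolgachev--Ortland and presents this corollary as a direct consequence of the Hilbert--Mumford criterion stated just before it, specialized to $d=2$, $n=5$. The coincidence of stable and semistable there is the $\gcd(5,3)=1$ phenomenon, which your non-integrality of $3k/5$ makes explicit.
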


 \begin{corollary} \cite[Chapter VII.3, pp. 116]{dolgachev-ortland} For $p=(p_1,\ldots, p_6)\in\underbrace{\PP^2\times\ldots\times\PP^2}_{6\textrm{\ times}} $, 
 \begin{itemize}
 \item $p$ is semistable if and only if
at most $2$ among $p_1,\ldots, p_6$ coincide and at most $4$ are collinear.
\item $p$ is stable if and only if  $p_1,\ldots, p_6$ are distinct and at most $3$ among them are collinear.
 \end{itemize}
 \end{corollary}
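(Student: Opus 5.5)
The statement to prove is the last corollary: six ordered points in $\PP^2$ under $\SL(3)$. The plan is to specialize Theorem~\ref{thm:Hilbert-Mumford criterion} to $d=2$, $n=6$. Then $(d+1)/n = 1/2$, so the inequality \eqref{eq:ineqss} reads
\[
\dim\langle p_{i_1},\ldots,p_{i_k}\rangle + 1 \ge \tfrac{k}{2}
\]
for every proper subset of size $k$, with strict inequality characterizing stability. We run through $k=1,\dots,5$ and translate each inequality into a geometric condition on coincidences and collinearities.

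\textbf{The dimension of a span.} The quantity $\dim\langle\cdot\rangle+1$ is $1$ if the chosen points all coincide, $2$ if they span a line, and $3$ otherwise. We compare this with $k/2$ for each $k$.

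\textbf{Semistability.}
\begin{itemize}
\item For $k\le 2$ the inequality always holds.
\item For $k=3$ we need span dimension $+1\ge 3/2$, i.e.\ the three points do not all coincide. This says at most $2$ points coincide.
\item For $k=4$ we need span dimension $+1 \ge 2$. This forbids four coincident points, which the $k=3$ case already excludes.
\item For $k=5$ we need span dimension $+1\ge 5/2$, i.e.\ any five points span $\PP^2$. This says at most $4$ points are collinear.
\end{itemize}
Conversely, these two conditions imply every inequality, since larger subsets inherit the span properties of their subsets. This gives the semistable characterization.

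\textbf{Stability.} Now the inequalities are strict.
\begin{itemize}
\item $k=1$: $1>1/2$ always holds.
\item $k=2$: we need span dimension $+1>1$, so the points are distinct.
\item $k=3$: we need $>3/2$, i.e.\ not all coincident, which follows from distinctness.
\item $k=4$: we need $>2$, so any four points span the plane, i.e.\ no four are collinear.
\item $k=5$: we need $>5/2$, which follows from the $k=4$ condition.
\end{itemize}
So stable means the points are distinct and at most $3$ are collinear. Conversely, these conditions give all the strict inequalities.

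\textbf{Where care is needed.} There is no real obstacle; the argument is a case check. The only points requiring care are the monotonicity argument for the converse direction (handling mixtures of coincidences and collinearities) and the fact that $k=6$ is excluded because only proper subsets are considered.
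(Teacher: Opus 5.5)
Your proof is correct and follows the route the paper intends. The paper gives no separate argument: it cites Dolgachev--Ortland and presents this corollary as a direct consequence of the Hilbert--Mumford criterion (Theorem~\ref{thm:Hilbert-Mumford criterion}). Your case check of $\dim\langle p_{i_1},\ldots,p_{i_k}\rangle+1\ge k/2$ (strict for stability) over proper subsets with $k=1,\ldots,5$ is exactly that specialization to $d=2$, $n=6$, including the correct observation that excluding $k=6$ is what makes the strict version consistent.
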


In particular, six distinct points on a smooth conic form a stable configuration.

\begin{corollary}\label{coro:stable7} \cite[Chapter VII.4, pp. 120]{dolgachev-ortland} For $p=(p_1,\ldots, p_7)\in\underbrace{\PP^2\times\ldots\times\PP^2}_{7\textrm{\ times}} $, the following conditions are equivalent
 \begin{itemize}
 \item $p$ is semistable.
 \item $p$ is stable.
 \item at most $2$ points among $p_1,\ldots, p_7$ coincide and at most $4$ are collinear.
 \end{itemize}
 \end{corollary}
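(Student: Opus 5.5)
We specialize Theorem~\ref{thm:Hilbert-Mumford criterion} to $d=2$, $n=7$. Since $\gcd(7,3)=1$, the right-hand side $3k/7$ of \eqref{eq:ineqss} is never an integer for $1\le k\le 6$, while the left-hand side is an integer. So equality can never occur, and semistability and stability coincide. This gives the equivalence of the first two items. It remains to translate the strict inequality $\dim\langle p_{i_1},\ldots,p_{i_k}\rangle+1>3k/7$, for all proper subsets of size $k$, into the geometric condition of the third item.

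We organize the translation by the possible values of $\dim\langle p_{i_1},\ldots,p_{i_k}\rangle\in\{0,1,2\}$.

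\emph{Span of dimension $2$.} The left side is $3$, and $3>3k/7$ holds for every $k\le 6$. So subsets spanning the plane impose no condition.

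\emph{Span of dimension $1$ (a line).} The condition is $2>3k/7$, i.e.\ $k<14/3$, so $k\le 4$. Thus any collection of points lying on a common line has cardinality at most $4$. Here the count is with multiplicity as an ordered tuple: coincident points count separately, since a subset of indices whose points are all on a line is a subset with span of dimension at most $1$.

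\emph{Span of dimension $0$ (a single point).} The condition is $1>3k/7$, i.e.\ $k<7/3$, so $k\le 2$. Thus at most $2$ of the $p_i$ coincide.

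Conversely, suppose at most $2$ of the points coincide and at most $4$ lie on any line. Then every proper index subset whose span is a point has $k\le2$, and every one whose span is a line has $k\le4$. In each case the strict inequality holds, and subsets spanning $\PP^2$ are automatic. This proves the equivalence with the third item.

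The only step needing care is the bookkeeping for collinearity in the presence of coincident points. One must make sure ``at most $4$ collinear'' is read as a statement about index subsets, so that, for example, a double point together with three further points on a line through it counts as $5$ and violates the bound. I expect no real obstacle beyond fixing this convention consistently with the cited \cite[Chapter VII.4]{dolgachev-ortland}.
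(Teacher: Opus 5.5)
Your proposal is correct and follows the route the paper indicates. The paper gives no proof of its own and cites Dolgachev--Ortland, presenting these characterizations as consequences of the Hilbert--Mumford criterion together with the remark that semistability equals stability when $\gcd(n,d+1)=1$; your argument specializes exactly these to $d=2$, $n=7$, reading off $k\le 2$ for a point and $k\le 4$ for a line. Your convention of counting coincident points separately, as indices, is the right one.
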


 In particular, seven points on a smooth conic, among which at most two coincide, make a stable configuration.

Suppose  $p = (Ax_i)_{i=1}^n \in (\PP^2)^n$ is the image of $\X = \{x_1, \ldots, x_n\} \subset \PP^3$ under camera $A$ with center $a$. Then we can ask for conditions on $a$ such that $p$ is semistable  since those are the only types of images that can be handled via $\M_2^n$. The above characterizations of semistability help us determine the indeterminacy locus of camera centers in this sense. We use the notation $2\X$ to mean $\X + \X$, i.e., lines that join pairs of points in $\X$.

 \begin{corollary}\label{coro:indeterm}
 Let $\X=\{x_1,\ldots, x_n\}\subset\PP^3$ be generic and $n\ge 5$. The indeterminacy locus of the rational map
 $$\begin{array}{ccc}\PP^3&\dashrightarrow&\M^n_2\\
 a&\mapsto&[(Ax_i)_{i=1}^n] \end{array}$$
 is given by $\left\{\begin{array}{cc}
    \X  & \textrm{\ for\ }n\ge 6 \\
    2\X  & \textrm{\ for\ }n = 5\end{array}\right.$.
   \end{corollary}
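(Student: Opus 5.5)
The plan is to translate the semistability criteria of the three preceding corollaries (and Theorem~\ref{thm:Hilbert-Mumford criterion} for general $n$) into conditions on the center $a$. The map $a\mapsto[(Ax_i)_{i=1}^n]$ is defined exactly where the image tuple $p=(Ax_i)$ is semistable, and the image does not depend on the choice of camera with center $a$, up to $\SL(3)$. Projecting from $a$, two images $Ax_i,Ax_j$ coincide iff $a$ lies on the line $\langle x_i,x_j\rangle$. Three images are collinear iff $a$ lies in the plane $\langle x_i,x_j,x_k\rangle$. A set of $k$ images is collinear iff $a$ together with the $k$ points span at most a plane. The image $Ax_i$ is undefined iff $a=x_i$, so $\X$ always lies in the indeterminacy locus; for $a\in\X$ the tuple is not even defined, and $\X$ lies in the closure of the bad set in every case.

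\textbf{Case $n=5$.} By the corollary for five points, $p$ is semistable iff the five images are distinct and at most three are collinear. Distinctness fails exactly when $a\in\langle x_i,x_j\rangle$ for some $i<j$, that is, $a\in 2\X$. Now suppose four images are collinear. Then $a$ and the four points $x_i,x_j,x_k,x_l$ lie in a plane. For generic $\X$ no four points are coplanar, so this cannot happen. Hence the indeterminacy locus is exactly $2\X$, which contains $\X$.

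\textbf{Case $n\ge 6$.} I will use Theorem~\ref{thm:Hilbert-Mumford criterion} with $d=2$. For $a\notin\X$, I need to show that every subset of size $k$ satisfies $\dim\langle p_{i_1},\dots,p_{i_k}\rangle+1\ge 3k/n$, with the possible exception of those that are genuinely harmless. The subsets are handled by the dimension of their span.
\begin{itemize}
\item A single point ($\dim=0$): the bound requires $k\le n/3$. Coincidences of images occur only when $a$ lies on a line $\langle x_i,x_j\rangle$. For generic $\X$ (with $n\ge 6$) such a line contains no third point of $\X$, so at most two images coincide, and $2\le n/3$ holds since $n\ge 6$.
\item Collinear images ($\dim=1$): the bound requires $k\le 2n/3$. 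Collinearity of $k$ images means $a$ and the $k$ points are coplanar. For generic $\X$ no four points are coplanar, so a plane through $a$ contains at most three points of $\X$. The exception is when $a$ lies on a line $\langle x_i,x_j\rangle$: then a plane through that line and one further point $x_m$ contains three points of $\X$, and no fourth, so $k\le 3$ again. In either case $k\le 3\le 2n/3$.
\item Proper subsets spanning the plane ($\dim=2$): the bound $3\ge 3k/n$ is automatic.
\end{itemize}
So every $a\notin\X$ gives a semistable tuple, and the indeterminacy locus is exactly $\X$.

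The main obstacle is making the genericity bookkeeping precise. Specifically, I must check that when $a$ lies on a secant line, the coincident pair together with other points never produces four collinear images. This reduces to the statement that for generic $\X$ no four points are coplanar, which holds for generic points.

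A remaining subtlety is how to treat points in the closure where the map might extend as a rational map, for example across $2\X$ when $n=5$. The intended statement concerns where the morphism given by the invariants is defined. I would argue that for $n=5$ the invariants $g_0,\dots,g_5$ all vanish identically at a non-semistable image, so the map is genuinely undefined there. I would confirm this with the explicit bracket generators: each $g_i$ contains a bracket $[ijk]$ for every pair $\{i,j\}$, which vanishes when $p_i=p_j$.
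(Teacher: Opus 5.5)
Your proof is correct and follows the paper's argument. For $n\ge 6$ both verify the Hilbert--Mumford inequality (\ref{eq:ineqss}) using only that, for generic $\X$, no three points are collinear and no four are coplanar; you organize the check by the dimension of the span rather than by $k$, which is equivalent. For $n=5$ both reduce semistability to distinctness of the images, which fails exactly on $2\X$. Like the paper, you take ``indeterminacy'' to mean failure of semistability and do not prove that the rational map cannot extend across $\X$ or $2\X$, so your argument is as rigorous as the published one.
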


\begin{proof}
We will argue that the inequality (\ref{eq:ineqss}) for
  $(Ax_i)_{i=1}^n \in (\PP^2)^n$ is 
satisfied if $a\notin\X$ for $n\ge 6$ and if $a\notin 2\X$ for $n=5$.
Note that (\ref{eq:ineqss}) is trivially true when $k=1$. 

Suppose $n \geq 6$. By the genericity of $\X$,
$\dim\langle Ax_{i_1},\ldots, Ax_{i_k}\rangle\ge (k-2)$ for
$2\le k\le 4$ and it is equal to $2$ for $k\ge 4$.
Therefore, 
$\dim\langle Ax_{i_1},\ldots, Ax_{i_k}\rangle+1\ge k-1\ge \frac {3 k}{6}\ge
\frac{3 k}{n}$ for $k=2, 3, 4$ which proves (\ref{eq:ineqss}).

For any $n$, and $a\in\PP^3 \setminus \X$,
  $Ax_i, i=1,\ldots n$ are $n$ points in $\PP^2$,
  which, by the genericity of $\X$, are distinct if and only if $a\notin 2\X$. If $n=5$ and $a \not \in 2\X$, we get
$$\dim\langle Ax_{i_1},\ldots, Ax_{i_k}\rangle+1=\left\{
\begin{array}{cc}2&\textrm{\ for\ }k=2\\
\ge 2&\textrm{\ for\ }k=3\\
3&\textrm{\ for\ }k\ge 4\end{array}\right.$$
which proves (\ref{eq:ineqss}) again.
\end{proof}
\subsection{Association}\label{subsec:association}

There are natural isomorphisms
$$a_{d,n}\colon\M_d^n\to \M_{n-d-2}^n$$
which are involutive, i.e.
$a_{n-d-2,n}\cdot a_{d,n}=\mathrm{id}$, called {\em association} isomorphisms. We refer again to \cite[Chapter III]{dolgachev-ortland} for details. In the combinatorial setting they are called Gale transforms.

A brief description is as follows.
A general point in $\M_d^n$ can be represented by an $n$-tuple given by the columns of a $(d+1)\times n$
matrix of maximal rank. Its right kernel is spanned by the columns of a $n\times (n-d-1)$ matrix whose rows represent a point in $\M_{n-d-2}^n$; the class on the right so obtained, is the image of the class on the left under $a_{d,n}$. An $n$-tuple in $(\PP^d)^n$ is {\em self-associated} if it also represents the class on the right. This may happen only if $n=2d+2$ such as   
$6$ points in $\PP^2$.

Association plays a role in our setting. 
First, we have the association
$\M_2^6\to \M_2^6$. 
The self-associated $6$-tuples come from six points on a 
conic. 
The second important case for us is
\begin{align}\label{eq:a25}a_{2,5}\colon \M_2^5\to \M_1^5\end{align}
which, on the open part of $\M_2^5$ consisting of $5$-tuples such that {\em no three points are collinear}, can be described as follows,
 see \cite[Proposition 2, pp. 41]{dolgachev-ortland}. 
Given five general points in $\PP^2$,
there is a unique smooth conic through them.
Since the conic is isomorphic to $\PP^1$,
the five points determine an element of $\M_1^5$.
Another $5$-tuple in $(\PP^2)^5$ which is $\SL(3)$-equivalent to the first gives an $\SL(2)$-equivalent $5$-tuple on $\PP^1$. The construction is invertible because
every automorphism of a smooth conic comes as the restriction of an automorphism of $\PP^2$.

This description of $a_{2,5}$ can be generalized to
\begin{align}
    a_{n,n+3}\colon\M_n^{n+3}\to \M_1^{n+3}.
\end{align}
On the open part of $\M_n^{n+3}$ consisting of $(n+3)$-tuples such that no $n+1$ points lie on a hyperplane, the description is the following.  There is a unique rational normal curve of degree $n$ through the $n+3$ points \cite[Theorem 1.18]{HarrisAGbook}. Since the curve is isomorphic to $\PP^1$, as before, we get a $(n+3)$-tuple on $\PP^1$. Again every automorphism of a rational normal curve in $\PP^n$ comes as the restriction of an automorphism of $\PP^n$ which shows that 
$a_{n,n+3}$ is an isomorphism.

We end this section with a classical result needed later. 
For a modern reference see Kapranov's approach in 
\cite[Theorem 0.1 (a)]{Kap} along with its proof on \cite[pp. 243]{Kap}.

\begin{proposition}\label{prop:Kap}
    The variety of rational normal curves of degree $n$ through $n+2$ fixed points in $\PP^n$, such that no $n+1$ lie in a hyperplane,  is isomorphic to the open part of $\M_1^{n+2} = \M_{n-1}^{n+2}$ consisting of classes of distinct points in $\PP^1$.
\end{proposition}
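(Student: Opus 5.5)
We identify a rational normal curve $C$ of degree $n$ through the fixed points $p_1,\ldots,p_{n+2}\in\PP^n$ with the configuration $(p_1,\ldots,p_{n+2})$ read on $C\cong\PP^1$; this gives a class in $\M_1^{n+2}$ of distinct points. For the inverse we will use the standard construction of rational normal curves, as in the proof of \cite[Theorem 1.18]{HarrisAGbook}. Throughout, we normalize $p_1,\ldots,p_{n+1}$ to be the coordinate points $e_0,\ldots,e_n$ and $p_{n+2}=(1:\cdots:1)$. This is possible because no $n+1$ of the points lie in a hyperplane.

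\emph{The forward map.} Let $C$ be a rational normal curve through $p_1,\ldots,p_{n+2}$. Choose an isomorphism $\nu\colon\PP^1\to C$ and set $q_i=\nu^{-1}(p_i)$. The $q_i$ are distinct, and another choice of $\nu$ changes them by an element of $\PP\mathrm{GL}(2)$. So $C\mapsto[(q_i)]$ is a well-defined point of the open part of $\M_1^{n+2}$.

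\emph{The inverse map.} Start from distinct $q_1,\ldots,q_{n+2}\in\PP^1$. Write $q_i=(s_i:t_i)$ and let $\ell_i=t_is-s_it$ be the linear form vanishing at $q_i$. Consider the map
$$\phi(q)=\Big(\tfrac{c_0}{\ell_1(q)}:\cdots:\tfrac{c_n}{\ell_{n+1}(q)}\Big),$$
which, after clearing denominators, is given by the degree-$n$ forms $c_j\prod_{k\ne j}\ell_k$ for $k\le n+1$. We check three things.
\begin{itemize}
\item These $n+1$ forms are a basis of $\sym^n$, since $q_1,\ldots,q_{n+1}$ are distinct (Lagrange interpolation). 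Hence the image of $\phi$ is a rational normal curve.
\item $\phi(q_j)=e_{j-1}$ for $j\le n+1$.
\item Taking $c_j=\ell_{j+1}(q_{n+2})$, which is nonzero, gives $\phi(q_{n+2})=(1:\cdots:1)$.
\end{itemize}

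\emph{Uniqueness and well-definedness.} Conversely, let $C$ be any rational normal curve through the $p_i$ and let $\nu$ be its parametrization. The composition of $\nu$ with the coordinate hyperplanes gives degree-$n$ forms. The $j$-th form vanishes at all $q_k$ with $k\le n+1$, $k\neq j$, so it is a scalar multiple of $\prod_{k\ne j}\ell_k$. The condition $\nu(q_{n+2})=(1:\cdots:1)$ then pins down the scalars up to a common factor. Therefore $C$ is determined by $(q_i)$. Moreover, replacing $(q_i)$ by a $\PP\mathrm{GL}(2)$-translate yields the same curve. The two constructions are mutually inverse, so we get a bijection.

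\emph{Upgrading to an isomorphism of varieties.} This is the step I expect to need the most care. The plan is to exhibit both maps as morphisms.
\begin{itemize}
\item The inverse is a morphism. The coefficients of $\phi$ depend polynomially on the $q_i$, so we get a family of curves over the open set of distinct $(q_i)$ inside $(\PP^1)^{n+2}$. This family is $\SL(2)$-invariant, hence descends to the geometric quotient, which is the stable locus.
\item The forward map is a morphism. Using the $\PP\mathrm{GL}(2)$-action we normalize $q_1,q_2,q_3=0,\infty,1$. The remaining $q_i$ are then cross-ratios, which are regular functions of the curve: on the universal curve they can be written via projections from $n-1$ of the points.
\end{itemize}
Finally, the identification $\M_1^{n+2}=\M_{n-1}^{n+2}$ is the association isomorphism of \S\ref{subsec:association}, so nothing further needs to be proved there.
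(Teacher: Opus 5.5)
Your proposal is correct and follows essentially the same route as the classical argument the paper cites from Kapranov. You read off the $n+2$ marked points on $C\cong\PP^1$, and you invert by embedding $\PP^1$ as a rational normal curve and moving the images onto the fixed projective frame; your Lagrange-basis formula for $\phi$ makes that projective transformation explicit, and your sketch of why both maps are morphisms goes beyond the paper, which only gives the citation (there is a harmless indexing slip: the $j$-th form should be $c_j\prod_{k\neq j+1}\ell_k$).
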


\section{A dimension bound on the centers-variety}
\label{sec:overview thm}

We begin with some basic information on the dimension of the centers-variety, defined in the Question from the Introduction.

\begin{theorem}\label{thm:loci}
For two generic sets of ordered points  $\X = \{x_1,\dots,x_n\}$ and $\Y = \{y_1, \dots, y_n\}$ in $\PP^3$, the centers-variety is $\PP^3 \times \PP^3$ if $n \leq 4$, and 
 has dimension at least $14-2n$ if $n=5,6,7$.
\end{theorem}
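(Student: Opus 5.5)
Consider the two rational maps
$$\phi_\X\colon \PP^3\dashrightarrow \M_2^n,\quad a\mapsto[(Ax_i)_{i=1}^n],\qquad \phi_\Y\colon \PP^3\dashrightarrow \M_2^n,\quad b\mapsto[(By_i)_{i=1}^n],$$
which are well defined outside the indeterminacy loci described in Corollary~\ref{coro:indeterm} (for $n\ge 5$). A pair $(a,b)$ of centers, with both images stable, satisfies $(Ax_i)\sim(By_i)$ exactly when $\phi_\X(a)=\phi_\Y(b)$. Stable orbits are separated by invariants, so for stable images equality in $\M_2^n$ is equivalent to $\SL(3)$-equivalence. For $n\ge 5$ the centers-variety therefore contains the closure of the fiber product
$$\Gamma=\{(a,b)\in\PP^3\times\PP^3 : \phi_\X(a)=\phi_\Y(b)\},$$
which is the preimage of the diagonal $\Delta\subset\M_2^n\times\M_2^n$ under $\phi_\X\times\phi_\Y$. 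Since $\M_2^n$ is smooth of dimension $2n-8$ for $n=5,7$, the diagonal is locally cut out by $2n-8$ equations. Each nonempty component of $\Gamma$ then has dimension at least $6-(2n-8)=14-2n$.

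For $n=6$, $\M_2^6$ is singular, but the stable locus maps to smooth points. The same local codimension count applies there, because the generic image $(Ax_i)_{i=1}^6$ is stable by the proof of Corollary~\ref{coro:indeterm}.

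The main obstacle is nonemptiness: we must show that some pair $(a,b)$ with stable images actually satisfies $\phi_\X(a)=\phi_\Y(b)$. The theorem's hypothesis in the introduction assumes such cameras exist. To be safe, I would instead produce a point from Theorem~\ref{thm:twocam-existence-general}. Choose generic $\mathcal Z\subset\PP^4$ with $n$ points and two generic centers $a',b'$. The projections $A'\mathcal Z$ and $B'\mathcal Z$ then give generic-looking pairs $(\X,\Y)$ that admit a solution.

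A dimension count makes this work. Configurations $\mathcal Z$ modulo $\PGL(5)$, together with $a',b'$, give $4n+8-24=4n-16$ parameters. Pairs $(\X,\Y)$ modulo $\PGL(4)\times\PGL(4)$ have $2(3n-15)=6n-30$ parameters. Since $4n-16\ge 6n-30$ exactly when $n\le 7$, for $n\le 7$ I expect the reconstruction map to be dominant, which I would check with a differential computation at one explicit example. Hence a generic pair $(\X,\Y)$ admits cameras, and the images are stable for generic choices. This gives the dimension bound $\ge 14-2n$ for $n=5,6,7$.

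For $n\le 4$ (the case $n=4$ being the one that matters), I would argue directly. Given generic $a\notin\X$, the four image points $Ax_i$ are in general position in $\PP^2$, and the same holds for $By_i$ with $b$ generic. By the fundamental theorem of projective geometry there is $H\in\SL(3)$ with $Ax_i=HBy_i$, after rescaling $H$ to determinant one. For $n<4$, extend to four points or use that fewer points in general position are also projectively equivalent. So every generic pair $(a,b)$ lies in the locus, and its Zariski closure is $\PP^3\times\PP^3$.
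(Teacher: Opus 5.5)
Your argument is sound, but it is organized differently from the paper's.

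\textbf{Dimension bound.} For $n\ge 5$ the paper never fixes $(\X,\Y)$. It forms the universal incidence variety $V_n^0$ of tuples $(a,\X,b,\Y)$ whose images agree in $\M_2^n$. It obtains $\dim V_n^0\ge 6(n+1)-(2n-8)=4n+14$ from the mere dominance of the two maps to $\M_2^n$, so no smoothness of $\M_2^n$ is used and $n=6$ needs no special care. It then reads off $14-2n$ as a lower bound for the general fiber of the projection $\pi_2$ to $(\PP^3)^n\times(\PP^3)^n$. You instead pull back the diagonal of $\M_2^n$ for the given pair. At smooth points, i.e.\ at images of stable configurations, the diagonal is a local complete intersection of codimension $2n-8$. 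This bounds every component of $\Gamma$ that meets the stable locus, directly for that pair, without passing through a family.

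\textbf{Nonemptiness.} This is the more substantive difference. The paper's fiber-dimension step tacitly assumes that $\pi_2$ is dominant, i.e.\ that generic $(\X,\Y)$ admit such cameras at all. That assumption is built into the hypothesis of the main theorem in the introduction and is confirmed only a posteriori in \S\ref{sec:n=5}--\S\ref{sec:n=7}. Your route through Theorem~\ref{thm:twocam-existence-general} makes the issue explicit. The count $(4n-16)-(6n-30)=14-2n$ explains both the threshold $n\le 7$ and the expected dimension.

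\textbf{What remains unverified.} Dominance of the reconstruction map is still only asserted. It is automatic for $n=5$, since five generic points of $\PP^3$ form a projective frame and the target is a point. For $n=6,7$ you must actually carry out the rank computation you describe, or else invoke the later results of the paper.

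\textbf{Minor slip.} The proof of Corollary~\ref{coro:indeterm} only yields semistability for $n=6$. Stability of the six image points when $a\notin 2\X$ comes instead from the stability criterion for six points (distinct, at most three collinear).

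Your $n\le 4$ argument is the same as the paper's.
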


\begin{proof} 
Let $\X$ and $\Y$ be two generic sets of world points with $n \leq 4$ points. Then for any pair of general cameras $A$ and $B$ with centers $a$ and $b$, $(Ax_i)_{i=1}^n$ and $(By_i)_{i=1}^n$ consist of points in $\PP^2$ in general position. Therefore, there is always a projective transformation taking one image to the other. The Zariski closure of all such general $(a,b)$ is $\PP^3 \times \PP^3$ which is then the centers-variety.

Suppose $n\ge 5$. Let $V_n^0$ denote all pairs 
$((a, \X), (b, \Y))\in\PP^3\times(\PP^3)^n\times
\PP^3\times(\PP^3)^n$ for which the projection of $\X$ through $a$ and the projection of $\Y$ through $b$ are defined, the images are 
 $\SL(3)$-equivalent and semistable, hence represented by the same point on $\M_2^n$. 
To be precise,  define
$$\X_a :=\left\{\begin{array}{ll}\{(a,\X)\in\PP^3\times(\PP^3)^n \, | \, a\in\X\}&\textrm{\ if\ }n\ge 6\\
\{(a,\X)\in\PP^3\times(\PP^3)^5 \, | \, a\in 2\X\}&\textrm{\ if\ }n=5\end{array}\right.$$
and similarly, $\Y_b$. By Corollary~\ref{coro:indeterm}, the  following fiber product defines $V_n^0$ as a subscheme of 
$\PP^3\times(\PP^3)^n\times
\PP^3\times(\PP^3)^n$:
\begin{center}
\begin{tikzcd}
& V_n^0 \arrow{dl}{}[swap]{} \arrow{dr}{} & \\
\left(\PP^3\times(\PP^3)^n\right)\setminus\X_a \arrow{dr}{}[swap]{A} & & \left(\PP^3\times(\PP^3)^n\right)\setminus\Y_b \arrow{dl}{B} \\
& \M^n_2 &
\end{tikzcd}
\end{center}
Since the maps $A$ and $B$ are both dominant, and 
the dimension of $\M^n_2$ 
is $2n-8$, 
we have 
\begin{align} 
\dim V_n^0\ge \dim(\PP^3\times(\PP^3)^n \times \PP^3\times(\PP^3)^n )-\dim\M^n_2 =
6(n+1)-(2n-8) = 4n+14.
\end{align}
Consider the two projections:
\begin{center}
\begin{tikzcd}
& V_n^0 \arrow{dl}{}[swap]{\pi_1} \arrow{dr}{}{\pi_2} & \\
\PP^3\times\PP^3  & & (\PP^3)^n\times(\PP^3)^n
\end{tikzcd}
\end{center}
The general fiber of $\pi_2$ has dimension at least $
4n+14-6n=14-2n$. This fiber is isomorphic to its image through $\pi_1$, whose closure is the centers-variety. This concludes the proof.
\end{proof}

Corollary~\ref{cor:n=8locus} will prove that the centers-variety is generically empty when $n \geq 8$. 
In the forthcoming sections we will see that the inequality in Theorem \ref{thm:loci} is indeed an equality for $n=5,6,7$ point pairs, and we give explicit descriptions of the centers-variety in each case. In the flatland situation studied in \cite{flatlandpaper}, the calculation in the proof of Theorem~\ref{thm:loci} shows that the centers-variety has dimension at least $4-(n-3) = 7-n$. The results in \cite{flatlandpaper} establish equality.

\section{$n=5$}\label{sec:n=5}

In the remaining sections we will describe the centers-variety 
for  $n=5,6,7$ point pairs. Since $\X$ and $\Y$ are sets of generic points, without loss of generality we may always assume that the first five points in both $\X$ and $\Y$ are the standard vectors in $\PP^3$:
\begin{align} \label{eq:std vectors}
    (1:0:0:0), \,\,(0:1:0:0),\,\, (0:0:1:0),\,\, (0:0:0:1),\,\, (1:1:1:1).
\end{align}
Indeed, if $(Ax_i)_{i=1}^n \sim (By_i)_{i=1}^n$ then also 
$((AU^{-1})(Ux_i))_{i=1}^n \sim ((BV^{-1})(Vy_i))_{i=1}^n$ for all $U,V \in \PGL(4)$. Therefore, we may apply projective transformations to $\X$ and $\Y$ before considering the Question we are interested in.

Our first result for $n=5$ is the following theorem. 

\begin{theorem} \label{thm:n=5locus}
Let $\X = \{x_1,\dots,x_5\}$ and $\Y = \{y_1, \dots, y_5\}$  be two generic sets of five ordered points in $\PP^3$. Consider two cameras $A,B$ with centers $a,b$ such that 
$(Ax_i)_{i=1}^5 \sim (By_i)_{i=1}^5$.
If $a$ is chosen generically,  then $b$ lies on a twisted cubic curve that contains $\Y$,  and symmetrically, if $b$ is chosen generically, then $a$ lies on a twisted cubic curve that contains $\X$. 
\end{theorem}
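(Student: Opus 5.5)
The plan is to reduce the question to the association isomorphism $a_{2,5}\colon \M_2^5\to\M_1^5$ from \S\ref{subsec:association}, and then use the fact that projection from a point on a twisted cubic sends that cubic to a conic.

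First, I will make the setup precise. By Corollary~\ref{coro:indeterm}, for $a\notin 2\X$ the image $(Ax_i)_{i=1}^5$ is stable, so it defines a point of $\M_2^5$. For generic $a$ no three of the image points are collinear. Three images $Ax_i,Ax_j,Ax_k$ are collinear exactly when $a$ lies on the plane through $x_i,x_j,x_k$, so I only need to avoid finitely many planes. Then $a_{2,5}$ applies: the five image points lie on a unique smooth conic $C_a\cong\PP^1$, and they give a class $\lambda(a)\in\M_1^5$, namely five distinct points of $\PP^1$ modulo $\SL(2)$. The same construction for $b$ gives $\lambda(b)$. Since $a_{2,5}$ is an isomorphism on this open set, $(Ax_i)\sim(By_i)$ holds if and only if $\lambda(a)=\lambda(b)$.

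Next, I will describe the cone over $C_a$ with vertex $a$. Its preimage in $\PP^3$ is a quadric cone $Q_a$ with vertex $a$, and $Q_a$ contains $\X$. Fixing the class $\lambda(a)$, I ask which $b$ satisfy $\lambda(b)=\lambda(a)$. I will use Proposition~\ref{prop:Kap} with $n=3$: twisted cubics through the five points $\Y$ (no four coplanar) are parametrized by the open part of $\M_1^5$. Concretely, the twisted cubic $\Gamma$ corresponding to a class $\mu\in\M_1^5$ is the one for which $(y_1,\dots,y_5)$, viewed on $\Gamma\cong\PP^1$, represents $\mu$. Let $\Gamma_\mu$ denote this cubic; it is unique by Kapranov.

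The key step is to show that $\lambda(b)=\mu$ for every $b\in\Gamma_\mu\setminus\Y$. Projecting from a point $b$ on a twisted cubic maps $\Gamma_\mu$ isomorphically onto a smooth conic in $\PP^2$; the projection from $b$ extends over $b$ itself via the tangent line. This conic passes through the images $By_i$, so it is the unique conic $C_b$ through them. Because the projection is an isomorphism $\Gamma_\mu\to C_b$ identifying $y_i$ with $By_i$, the class of $(By_i)$ on $C_b\cong\PP^1$ is exactly $\mu$. Hence every $b\in\Gamma_{\lambda(a)}\setminus\Y$ works, and $\Gamma_{\lambda(a)}$ contains $\Y$.

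For the converse, that the locus of $b$ is no bigger than this curve, suppose $\lambda(b)=\lambda(a)$. The quadric cone $Q_b$ with vertex $b$ over $C_b$ contains $\Y$. A twisted cubic through $\Y$ and $b$ lying on $Q_b$ exists and is unique: on a quadric cone, twisted cubics through the vertex form the appropriate linear system, and through $5+1$ points such a cubic is determined. That cubic realizes $\lambda(b)$ by the previous paragraph, so by Kapranov's uniqueness it equals $\Gamma_{\lambda(a)}$, giving $b\in\Gamma_{\lambda(a)}$. This step is the main obstacle. I would first try the cleanest route: the twisted cubics through $\Y$ sweep out $\PP^3$ with exactly one through a general $b$ (a classical fact, also a consequence of $\M_1^6$ having dimension $3$). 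With that, the cubic through $\Y\cup\{b\}$ realizes $\lambda(b)$ by the projection argument, and Kapranov injectivity then forces it to be $\Gamma_{\lambda(a)}$.

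The symmetric statement, with $a$ and $b$ exchanged, follows by the same argument. This also recovers the dimension count $4=3+1$ from Theorem~\ref{thm:loci}.
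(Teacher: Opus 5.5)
Your proposal is correct and follows essentially the same route as the paper. Both use association $\M_2^5\cong\M_1^5$ via the conic through the projected points, the fact that projecting a twisted cubic from one of its points gives a conic isomorphic to it, and Proposition~\ref{prop:Kap}, so that $\lambda(a)=\lambda(b)$ forces the unique twisted cubic through $\Y\cup\{b\}$ to be $\Gamma_{\lambda(a)}$.

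The differences are cosmetic: you avoid normalizing $\X=\Y$ and you spell out both inclusions. The quadric-cone detour is unnecessary. Uniqueness of the cubic through $\Y\cup\{b\}$ holds for every $b$ off the ten planes $\langle y_i,y_j,y_k\rangle$, and this is automatic once $(By_i)\sim(Ax_i)$ with no three of the $Ax_i$ collinear.
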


\begin{proof}
Without loss of generality, $\X=\Y$ consists of the five standard points  \eqref{eq:std vectors}.
Pick $a \in \PP^3$ generically and consider the unique twisted cubic curve $T_a$ through $\X \cup \{a\}$. (Recall that $a \not \in \X$ and that there exists a unique rational normal curve through $d+3$ generic points in $\PP^d$.) Projecting $\X$ through $a$, we obtain five points in $\PP^2$ lying on a smooth conic that is the image of $T_a$ under the projection. The five image points in $\PP^2$ correspond to a unique point $p \in \M_2^5$. Similarly if we pick $b$ generically, there is a unique twisted cubic curve $T_b$ through $\Y \cup \{b\}$ and the projection of $\Y$ through $b$ lies on a smooth conic in $\PP^2$ that is the image of $T_b$. 

By association, $\M_2^5 = \M_1^5$ and by Proposition~\ref{prop:Kap}, the variety of twisted cubics through $\X = \Y$ is isomorphic to the open part of  $\M_1^5$ corresponding to classes of five distinct points in $\PP^1$. Therefore, 
the two sets of projected points ($\X$ through $a$ and $\Y$ through $b$) can correspond to the same point in $\M_2^5$ if and only if they correspond to the same point in $\M_1^5$ if and only if $T_a=T_b$. 
We conclude that if $a \in \PP^3$ is picked generically then the locus of $b$ is the unique twisted cubic through $(\X=\Y) \cup \{a\}$. 
\end{proof}

As a consequence of the proof of Theorem~\ref{thm:n=5locus} we can describe the centers-variety when $n=5$. Recall that the notation 
$2\X$ stands for the set of all lines through pairs of points in $\X$.

\begin{corollary}\label{cor:4fold}
     When $n=5$, 
     the centers-variety is a $4$-fold in $\PP^3 \times \PP^3$ given by the closure of the fiber product
     \begin{align}
        \left(\PP^3\setminus 2\X\right) \times_{\M_2^5} \left(\PP^3\setminus 2\Y\right)
     \end{align}
described in the following diagram. 
\begin{center}
\begin{tikzcd}
& \left(\PP^3\setminus 2\X\right) \times_{\M_2^5} \left(\PP^3\setminus 2\Y\right)\arrow{dl}{}[swap]{} \arrow{dr}{} & \\
\PP^3\setminus 2\X\arrow{dr}{}[swap]{\alpha} & & \PP^3\setminus 2\Y \arrow{dl}{\beta} \\
& \M_2^5 &
\end{tikzcd}
\end{center}     
The projection
     $\alpha$ sends $a$ to the twisted cubic through
     $\X \cup \{a\}$, while the projection $\beta$
     sends $b$ to the twisted cubic through
    $\Y \cup \{b\}$ via Proposition \ref{prop:Kap} and association.
    \end{corollary}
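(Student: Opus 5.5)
The plan is to identify the centers-variety with the closure of the fiber product of the two maps $\alpha\colon \PP^3\setminus 2\X\to\M_2^5$ and $\beta\colon\PP^3\setminus 2\Y\to\M_2^5$, where $\alpha(a)=[(Ax_i)_{i=1}^5]$ and $\beta(b)=[(By_i)_{i=1}^5]$. Then we compute its dimension and reinterpret the two maps through association. By Corollary~\ref{coro:indeterm}, for $n=5$ the indeterminacy locus of $a\mapsto[(Ax_i)_{i=1}^5]$ is exactly $2\X$, so $\alpha$ and $\beta$ are morphisms on the indicated open sets. By definition, a pair $(a,b)$ in $(\PP^3\setminus 2\X)\times(\PP^3\setminus 2\Y)$ has $\SL(3)$-equivalent images if and only if $\alpha(a)=\beta(b)$. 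This uses that for $n=5$ semistable equals stable, so each fiber of $\pi_{2,5}$ over the image consists of a single orbit.

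The remaining loci contribute nothing new to the closure. If $a\in 2\X\setminus\X$, two image points coincide, and equivalence forces the same coincidence on the $\Y$ side. These pairs form a lower-dimensional set, namely a line for $a$ times a line for $b$ with a condition. I would argue this set lies in the closure, or at least does not form an extra component of dimension $\ge 4$. One could simply note that it has dimension at most $2+(\text{constraint})<4$, so adding it does not change the top-dimensional closure. Alternatively, the Zariski closure of the generic locus is what the Question defines, so only the open part matters.

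For the dimension and the description, I would use the proof of Theorem~\ref{thm:n=5locus}. After normalizing $\X=\Y$ to the standard points \eqref{eq:std vectors}, $\alpha(a)$ corresponds under $a_{2,5}$ to the class in $\M_1^5$ of the five points on the conic image of $T_a$. Via Proposition~\ref{prop:Kap} this is identified with $T_a$ itself in the variety of twisted cubics through $\X$, which is open in $\M_1^5$. So $\alpha$ is the map $a\mapsto T_a$, and likewise $\beta$ is $b\mapsto T_b$.

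Each map has image of dimension $2=\dim\M_2^5$, and its fibers are the twisted cubics minus $\X$, which are $1$-dimensional. The fiber product is therefore fibered over a $2$-dimensional base with fibers $T\times T$ of dimension $2$, so it has dimension $4$, matching the lower bound $14-2n$ of Theorem~\ref{thm:loci}. I would also check irreducibility: the base is an open subset of the irreducible surface $\M_2^5$, and the fibers are irreducible curves times curves with $\alpha$ flat over an open set. Hence the closure is a $4$-fold.

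The main obstacle is the careful treatment of the boundary, namely points of $2\X$ and cubics degenerating where three of the five projected points become collinear. On these loci the association description $a_{2,5}$ via the conic fails, so they must be shown not to add components. I would handle this by dimension counting on those strata.
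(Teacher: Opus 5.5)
Your proposal is correct and is essentially the paper's proof. You restrict to $\PP^3\setminus 2\X$ and $\PP^3\setminus 2\Y$ by Corollary~\ref{coro:indeterm}, read equivalence of the (stable) images as $\alpha(a)=\beta(b)$, turn this into $T_a=T_b$ via association and Proposition~\ref{prop:Kap}, and count dimensions; you fiber over $\M_2^5$ as $2+2$ where the paper projects to the $a$-factor as $3+1$.

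Your boundary discussion is not needed, because the removal of $2\X$ and $2\Y$ is built into the fiber product in the statement. If you do want it, note two things. Pairs with $a\in\langle x_i,x_j\rangle$ and $b\in\langle y_i,y_j\rangle$ give equivalent images with no further condition. They also lie in the closure, since the limiting fiber over such $a$ contains $\langle y_i,y_j\rangle$ (cf.\ Remark~\ref{rem:cv5}), so your first option is the one that works.
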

\begin{proof}
   By Corollary~\ref{coro:indeterm}, we need to remove $2\X$ and $2 \Y$ from their corresponding $\PP^3$ before considering camera centers, to be able to map to $\M_2^5$. 
    As before, we may take $\X=\Y$, and then the images of $\X$ through $a$ and $b$ map to the same point in $\M_2^5 \cong \M_1^5$ if and only of the twisted cubics $T_a$ and $T_b$ are the same. Compare with the discussion after (\ref{eq:a25}). The dimension count follows since the centers-variety has a dominant map over $\PP^3$ with one dimensional general fibers. 
\end{proof}

\begin{corollary}\label{cor:minors}
\begin{enumerate}
    \item{} 
When $n=5$, 
     the centers-variety is the degeneracy locus of a
     vector bundle map on $\PP^3\times\PP^3$
     \begin{align}\label{eq:bundlemap}
\OO(-1,0)\oplus\OO(0,-1)\oplus\OO(-1,-1)\to\OO^4
\end{align}
 \item The multidegree of the centers-variety  
is $3a^2+5ab+3b^2$ in
the Chow ring of $\PP^3\times\PP^3$ which is $\ZZ[a,b]/(a^4,b^4)$. The centers-variety is a rational $4$-fold.
 \item If, without loss of generality,  $\X=\Y$ consists of the five standard points of $\PP^3$, then the ideal of the centers-variety consists of
the maximal minors of the matrix
\begin{equation}\label{eq:centersvar5}\left(\!\begin{array}{ccc}
      a_{0}&b_{0}&a_{0}b_{0}\\
      a_{1}&b_{1}&a_{1}b_{1}\\
      a_{2}&b_{2}&a_{2}b_{2}\\
      a_{3}&b_{3}&a_{3}b_{3}
\end{array}\!\right).\end{equation}
The matrix \eqref{eq:centersvar5} represents the map \eqref{eq:bundlemap}, in particular the three columns of \eqref{eq:centersvar5} correspond to the three summands of the source of \eqref{eq:bundlemap}.
\end{enumerate}
\end{corollary}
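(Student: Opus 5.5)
Start with part (3), since (1) and (2) follow from it. Take $\X=\Y$ to be the five standard points \eqref{eq:std vectors}. By Corollary~\ref{cor:4fold}, for $a,b\notin 2\X$ the pair $(a,b)$ lies in the centers-variety exactly when $b$ lies on the twisted cubic $T_a$ through $\X\cup\{a\}$. So the task is to write down this cubic explicitly.

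The key observation is the Cremona transformation $\phi\colon (a_0:\dots:a_3)\mapsto(1/a_0:\dots:1/a_3)$. It fixes $(1:1:1:1)$ and blows up the four coordinate points. A twisted cubic through the five standard points is sent by $\phi$ to a line through $(1:1:1:1)$, and conversely. Concretely, $T_a$ is parametrized by
\[
t\mapsto \left(\frac{1}{t+1/a_0}:\dots:\frac{1}{t+1/a_3}\right)=\left(\frac{a_0}{1+ta_0}:\dots:\frac{a_3}{1+ta_3}\right).
\]
One checks directly that this passes through $a$ (at $t=0$), through $(1:1:1:1)$ (at $t=\infty$), and through each coordinate point (at $t=-1/a_i$). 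Hence $b\in T_a$ if and only if $\phi(b)$ lies on the line spanned by $\phi(a)$ and $(1:1:1:1)$. This means the $4\times 3$ matrix with columns $1/a_i$, $1/b_i$, $1$ has rank at most $2$.

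Multiplying row $i$ by $a_ib_i$ turns this matrix into \eqref{eq:centersvar5}, with columns $b_i$, $a_i$, $a_ib_i$. So on the open set where all coordinates are nonzero, the centers-variety agrees with the rank $\le 2$ locus of \eqref{eq:centersvar5}. Next I would show that this determinantal locus $D$ is irreducible of the expected codimension $4-3+1=2$, hence dimension $4$. Then the closure of the open piece is all of $D$, and one must rule out extra components supported on coordinate hyperplanes; a local check on those hyperplanes, or a Macaulay2 computation, should suffice. Finally I would verify that the ideal of maximal minors is prime. By Eagon--Northcott, the ideal of maximal minors of a $4\times3$ matrix is Cohen--Macaulay when it has codimension $2$; it is then reduced by generic reducedness, and prime by irreducibility. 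This gives (3).

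For (1), the columns of \eqref{eq:centersvar5} have bidegrees $(1,0)$, $(0,1)$ and $(1,1)$. They therefore define exactly the bundle map \eqref{eq:bundlemap}, and $D$ is its degeneracy locus. This also reformulates the construction without coordinates, for general $\X=\Y$ after a change of coordinates.

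For (2), Porteous' formula says that the class of the locus where a map $E\to F$ of ranks $3,4$ drops rank is $c_2(F-E)$. Here $F-E$ is $\OO^4-\OO(-1,0)-\OO(0,-1)-\OO(-1,-1)$, so
\[
c(F-E)=\bigl((1-a)(1-b)(1-a-b)\bigr)^{-1}.
\]
The degree-$2$ part is $h_2$ of the three classes $a$, $b$, $a+b$:
\[
a^2+b^2+(a+b)^2+ab+a(a+b)+b(a+b)=3a^2+5ab+3b^2.
\]
Rationality follows because projection to the first factor is dominant with general fiber the rational curve $T_a$, which is parametrized rationally in $a$ by the formula above. Thus $(a,t)$ gives a birational map from $\PP^3\times\PP^1$ onto the centers-variety.

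The main obstacle is the step showing that the determinantal locus has no extra components inside the coordinate hyperplanes (or inside $2\X$) and that its ideal is prime. I would try a dimension count on each stratum first, and fall back on Macaulay2 if that does not close the argument.
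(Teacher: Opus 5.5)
Your argument is correct, but for the key part (3) it takes a different route from the paper's.

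The paper fixes a general $a$ and row-reduces the $4\times 3$ matrix so that its first column becomes a unit vector. The rank condition in $b$ then becomes the $2\times 2$ minors of a $3\times 2$ matrix of linear forms in $b$. This determinantal cubic curve visibly passes through $\X\cup\{a\}$, since two columns become dependent at each of those six points, so it is $T_a$.

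You instead use the Cremona involution to straighten $T_a$ into the line through $\phi(a)$ and $(1:1:1:1)$. The minors then appear as a collinearity condition after rescaling the rows by $a_ib_i$.

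\emph{What your route buys.} It gives an explicit global parametrization of $T_a$, which you reuse for rationality (the paper parametrizes $T_a$ by the pencil of planes through $(1:0:0:0)$ and $(0:1:0:0)$). It also gives a visibly irreducible description of the determinantal locus $D$ on the torus. The price is that it only sees the torus, hence the extra bookkeeping on coordinate hyperplanes.

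\emph{What the paper's route buys, and what it leaves out.} The fiberwise argument avoids the torus issue. However, as written it only identifies the fiber of $D$ over a general $a$. It is silent on possible extra components of $D$ and on primality of the ideal of minors. Your Eagon--Northcott plus generic-reducedness step therefore makes the claim that the ideal is generated by the maximal minors more rigorous than the paper does.

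The stratum count you hope for does close:
\begin{itemize}
\item for $a_0=0$ and the other $a_i$ general, the fiber is the line $b_1=b_2=b_3$ together with a conic in $\{b_0=0\}$;
\item for $a=(0:0:a_2:a_3)$, it is the plane $b_2=b_3$ together with the line $b_0=b_1=0$;
\item for $a$ a coordinate point, it is all of $\PP^3$.
\end{itemize}
So $D$ meets each coordinate hyperplane, in either factor by symmetry, in dimension $3$. Every component of a rank $\le 2$ locus of a $4\times 3$ matrix has dimension $\ge 4$, so none lies in a coordinate hyperplane. Together with irreducibility on the torus, this already proves $D$ is irreducible, so that need not be a separate step.

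Part (2) is the same Porteous computation as the paper's, since $h_2$ of the Chern roots equals $c_1^2(E)-c_2(E)$.
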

\begin{proof}
    We first prove (3).     
    Given a general $(a_0,\ldots, a_3)\in\PP^3$, we may perform row operations on the matrix \eqref{eq:centersvar5} to reduce the first column to have exactly one nonzero entry.      
    Then the locus of $(b_0,\ldots, b_3)\in\PP^3$ for which the matrix is rank deficient is the locus of maximal minors of the complementary $3\times 2$ matrix, which has linear entries in $b$. This locus  passes through $\X \cup \{a\}$ since evaluating  \eqref{eq:centersvar5} at $b\in\X\cup \{a\}$ makes two columns linearly dependent, and hence, 
    it is the unique twisted cubic through $\X \cup \{a\}$. By Corollary 
\ref{cor:4fold}, this proves (3), and (1) follows immediately. 
In order to prove (2), let $E :=\OO(-1,0)\oplus\OO(0,-1)\oplus\OO(-1,-1)$, then the Giambelli-Thom-Porteous formula gives that the class of 
the degeneracy locus of a map $E\to\OO^4$ is $c_2(\OO^4-E)$
which is the degree $2$ term of the development of the ratio of Chern polynomials $\frac{c(\OO^4)}{c(E)}$ which is $c_1^2(E)-c_2(E)$. We compute $c_1(E)=-2a-2b$, $c_2(E)=a^2+3ab+b^2$, which proves the multidegree.
The rationality follows since we may assume that $\X$ consists of the five standard points of $\PP^3$, then every twisted cubic through $\X \cup \{a\}$ can be
parametrized by the $\PP^1$-pencil of planes through $(1,0,0,0)$
and $(0,1,0,0)$. We get an injective dominant map from $\PP^3\times\PP^1$ to the centers-variety.
\end{proof}

\begin{remark}
The codimension $2$ cycles which generate the Chow ring
correspond to $a^2$, $ab$ and $b^2$. Their degrees in the Segre embedding of $\PP^3 \times \PP^3 \hookrightarrow \PP^{15}$ are respectively $4,6,4$, since
$a^2(a+b)^4=4a^3b^3$, $ab(a+b)^4=6a^3b^3$,
$b^2(a+b)^4=4a^3b^3$.
Hence the degree of the centers-variety 
in the Segre embedding is $3\cdot 4+5\cdot 6+3\cdot 4 = 54$.
The Hilbert polynomial of the centers-variety with respect to the line bundle $\OO(1,1)$ can be computed by (\ref{eq:centersvar5}) and is 
$$54P_4(t)-112P_3(t)+77P_2(t)-19P_1(t)+1,$$ where $P_i(t)={{t+i}\choose i}$.
\end{remark}

\begin{remark} The maximal minors of the obvious $(n+1)$-generalization of
(\ref{eq:centersvar5}) give the equations of the centers-variety obtained by projecting $n+2$ points from  $\PP^n$ to $\PP^{n-1}$, when $\X=\Y$ (without loss of generality) consists of the $n+2$ standard points of $\PP^n$.  Given a general $(a_0,\ldots, a_n)\in\PP^n$, the locus of $(b_0,\ldots, b_n)\in\PP^n$
    satisfying the generalization of (\ref{eq:centersvar5}) is the unique rational normal curve through $\X \cup \{a\}$. The argument is the same. The case $n=2$ corresponds to Theorem 26 in \cite{flatlandpaper}.
\end{remark}

\begin{remark}\label{rem:sing5}
    The singular locus of the $4$-dimensional centers-variety is given by the five diagonal points in $\X\times\Y$ and the following $50$ surfaces:
\begin{itemize}
\item The $30$ surfaces isomorphic to $\PP^1\times\PP^1$ given by
$\langle x_i, x_j\rangle\times\langle y_p, y_q\rangle$
with $\{i, j\}\cap\{ p, q\}=\emptyset$.

\item The $10$ surfaces isomorphic to $\PP^2\times\{p\}$ given by $\langle x_i, x_j, x_k\rangle\times \left(
\langle y_i, y_j, y_k\rangle\cap\langle y_p, y_q\rangle\right)$
with $\{i, j, k\}\cap\{ p, q\}=\emptyset$.

\item The $10$ surfaces isomorphic to $\{p\}\times\PP^2$ given by $\left(
\langle x_i, x_j, x_k\rangle\cap\langle x_p, x_q\rangle\right)\times
\langle y_i, y_j, y_k\rangle$ with $\{i, j, k\}\cap\{ p, q\}=\emptyset$.
\end{itemize}

Note that the surfaces isomorphic to $\PP^1\times\PP^1$ given by
$\langle x_i, x_j\rangle\times\langle y_p, y_q\rangle$
with $\{i, j\}\cap\{ p, q\}\neq\emptyset$ (therefore, different from those above) are not contained in the $4$-fold centers-variety.
This remark can be checked computationally, see the ancillary file \verb|"5pts-center_variety.m2"| 
in the arXiv submission. The fact that these surfaces consist of singular points can be seen
by degeneration arguments as in next Remark \ref{rem:cv5}. 
\end{remark}

Next we provide a computational proof of 
Theorem~\ref{thm:n=5locus} which is helpful in degenerate situations. 

{\em Computational proof of Theorem~\ref{thm:n=5locus}.}
Recall that the coordinate ring $R_2^5$ of $\M_2^5$ is generated by the six bracket polynomials $g_0, \ldots, g_5$ in \eqref{eq:firstgen5points} and \eqref{eq:gens5points}. 
We use this parametrization of $\M_2^5$ to prove Theorem~\ref{thm:n=5locus}. 

As before, we assume that $\X$ consists of the five standard points in $\PP^3$ \eqref{eq:std vectors}.
We project them through $a=(a_0:a_1:a_2:a_3)$ into $\PP^2$. This can be done by computing the intersection of the line $\ell_i = \langle a,x_i \rangle$ with the chart $x_3=0$ in $\PP^3$, and then dropping the last coordinate. The projection yields the $5$-tuple 
$\Pcal(a) = (p_1(a), \ldots, p_5(a)) \in (\PP^2)^5$ where 
\begin{align}
    \begin{split}
p_1(a) = (1:0:0), \,\,p_2(a) = (0:1:0), \,\,p_3(a) = (0:0:1), \\
p_4(a) = (a_0:a_1:a_2), \,\,p_5(a) =(-a_0+a_3:-a_1+a_3: -a_2+a_3).
\end{split}
\end{align}
We can then map $\Pcal(a)$ to $\M_2^5$ using the generators of $R_2^5$ to get the point 
\begin{align}
    g(\Pcal(a)) := (g_0(\Pcal(a)),\ldots, g_5(\Pcal(a))) \in \M_2^5.
\end{align} 
It can be checked (see the ancillary file \verb|5pts-b_locus.m2| of the arXiv submission) that 
\begin{small}
\begin{align*}
g_0(\Pcal(a))= & 
-a_0^2a_1^2a_2a_3+a_0a_1^2a_2^2a_3+a_0^2a_1a_2a_3^2+a_0a_1^2a_2a_3^2-a_0a_1a_2^2a_3^2-a_1^2a_2^2a_3^2-a_0a_1a_2a_3^3+a_1a_2^2a_3^3, \\
  g_1(\Pcal(a))= &-a_0^2a_1^2a_2a_3+a_0a_1^2a_2^2a_3+a_0^2a_1^2a_3^2+a_0^2a_1a_2a_3^2-a_0a_1^2a_2a_3^2-a_0a_1a_2^2a_3^2-a_0^2a_1a_3^3+a_0a_1a_2a_3^3,\\
 g_2(\Pcal(a))= &-a_0^2a_1^2a_3^2+a_0^2a_1a_2a_3^2+a_0a_1^2a_2a_3^2-a_0a_1a_2^2a_3^2+a_0a_1^2a_3^3-a_0a_1a_2a_3^3-a_1^2a_2a_3^3+a_1a_2^2a_3^3, \\
 g_3(\Pcal(a))= &-a_0^2a_1^2a_2a_3+a_0^2a_1a_2^2a_3+a_0^2a_1^2a_3^2-a_0^2a_1a_2a_3^2+a_0a_1^2a_2a_3^2-a_0a_1a_2^2a_3^2-a_0a_1^2a_3^3+a_0a_1a_2a_3^3,\\
 g_4(\Pcal(a))= &-a_0^2a_1^2a_3^2+a_0^2a_1a_2a_3^2+a_0a_1^2a_2a_3^2-a_0a_1a_2^2a_3^2+a_0^2a_1a_3^3-a_0^2a_2a_3^3-a_0a_1a_2a_3^3+a_0a_2^2a_3^3,\\ 
 g_5(\Pcal(a))= &-a_0^2a_1^2a_2a_3+a_0^2a_1a_2^2a_3+a_0^2a_1a_2a_3^2+a_0a_1^2a_2a_3^2-a_0^2a_2^2a_3^2-a_0a_1a_2^2a_3^2-a_0a_1a_2a_3^3+a_0a_2^2a_3^3\\
\end{align*}
\end{small}

As expected, all $g_i(\Pcal(a)$ will vanish if $a \in 2\X$. 
Since we can assume that 
 $y_i = x_i$, projecting $\Y$ through $b \in \PP^3$, we arrive at the point $g(\Pcal(b)) = (g_0(\Pcal(b)), \ldots, g_5(\Pcal(b))) \in \M_2^5$. 

By assumption, $g(\Pcal(a)) = g(\Pcal(b))$. If we fix $a$ randomly (say $a = \bar{a}$) then we get a (scalar) point $u = g(P({\bar{a}})) \in \M_2^5$. The fiber of $u$ under the map $\Pcal(b) \mapsto g(\Pcal(b))$ is the locus of $b$. To compute this preimage/fiber, we compute all $2 \times 2$ minors of 
\begin{align}
    \begin{pmatrix} g_0(\Pcal(b)) & \cdots & g_5(\Pcal(b)) \\ 
                    {u_0} & \cdots & u_5 \end{pmatrix}
\end{align}
and saturate with the ideal generated by $g_0(\Pcal(b)), \ldots, g_5(\Pcal(b))$. 

For example, if 
$\bar{a} = (43, -50, 6, -5)$ we get the fiber to be the variety of the ideal 
\begin{align}
\langle 
28b_1b_2+27b_1b_3-55b_2b_3, \,\,185b_0b_2+288b_0b_3-473b_2b_3, \,\,31b_0b_1-160b_0b_3+129b_1b_3
\rangle
      \end{align}
which has dimension $1$ and degree $3$. Since the variety is smooth and not contained in a plane, the fiber is a twisted cubic in $\PP^3$. One can check that this curve passes through $\Y$. \qed

The above computational strategy can be utilized to understand 
degenerate situations. 

\begin{remark}\label{rem:cv5}

\begin{itemize}
\item When the camera center $a$ is not coplanar with three of the points in $\X$ then the twisted cubic containing $b$ is smooth.
It meets any plane containing three points of $\Y$ just in these three points, for degree reasons.

\item When the camera center $a$ is coplanar with three of the points in $\X$, say $a\in\langle x_i, x_j, x_k\rangle$ then the twisted cubic containing $b$ 
degenerates into the union of a line and a conic. The conic is smooth unless $a\in\left(\langle x_i, x_j\rangle\cup\langle x_i, x_k\rangle
\cup\langle x_j, x_k\rangle\right)$. The singular point of this reducible curve is $\left(
\langle x_i, x_j, x_k\rangle\cap\langle x_p, x_q\rangle\right)$ with $\{i, j, k\}\cap\{ p, q\}=\emptyset$. Note that the $3$-fold
isomorphic to $\PP^2\times\PP^1$ given by $\langle x_i, x_j, x_k\rangle\times\langle y_p, y_q\rangle$ is contained in the centers-variety, it contains the three surfaces isomorphic to $\PP^1\times\PP^1$ given by $\langle \widehat{x_i}, x_j, x_k\rangle\times\langle y_p, y_q\rangle$,
$\langle x_i, \widehat{x_j}, x_k\rangle\times\langle y_p, y_q\rangle$,
$\langle x_i, x_j, \widehat{x_k}\rangle\times\langle y_p, y_q\rangle$, which consist of singular points of the centers-variety (see
Remark \ref{rem:sing5}).

\item When $a$ lies in the intersection of two planes spanned by three of the points, say $a\in\left(
\langle x_i, x_j, x_k\rangle\cap\langle x_i, x_p, x_q\rangle\right)$ 
with $\{i, j, k\}\cap\{ p, q\}=\emptyset$, then the twisted cubic containing $b$ 
degenerates into the union of three lines which are, respectively, 
$$\langle  y_j, y_k\rangle, \,\,\langle y_p, y_q\rangle,\,\, \textup{ and } 
\left(
\langle y_i, y_j, y_k\rangle\cap\langle y_i, y_p, y_q\rangle\right).$$
The singular points of the curve are two points as in the previous item.

\item When $a$ is collinear with two of the points in
$\X$, say $a\in\langle x_i, x_j\rangle$ then the twisted cubic containing $b$ degenerates into  the union of a line and a plane
which are respectively $\langle y_i, y_j\rangle$
and $\langle y_k, y_p, y_q\rangle$ with  $\{i, j\}\cap\{k, p, q\}=\emptyset$.

\item When $a$ is one of the points in
$\X$ then $b$ can be any point in $\PP^3$.
\end{itemize}
\end{remark}

\section{$n=6$}
\label{sec:n=6}
\begin{theorem}\label{thm:n=6locus}
Let $\X = \{x_1,\dots,x_6\}$ and $\Y = \{y_1, \dots, y_6\}$  be two generic sets of six ordered points in $\PP^3$. 
Their  
centers-variety is a surface $S \subset \PP^3 \times \PP^3$ that is isomorphic to a smooth quadric surface blown-up in six points. Equivalently, $S$ is isomorphic to the Del Pezzo surface of degree $2$ given by the plane blown-up in seven points\footnote{It is well known that its anticanonical bundle is ample but not very ample (see Remark \ref{rem:delpezzo2}).}.

If there are two cameras $A$ and $B$ with centers $a$ and $b$ such that $(Ax_i)_{i=1}^6 \sim (By_i)_{i=1}^6$, then $a$ and $b$ are constrained as follows. 
\begin{itemize}
\item The center $a$ lies on a smooth quadric surface $S_\beta  \subset\PP^3$ obtained as a projection of the centers-variety $S$. The surface $S_\beta$ contains $\X$ and the projection is the blow-up through $\X$. 

\item The center $b$ lies on a smooth quadric surface $S_\alpha  \subset\PP^3$ obtained as a projection of the centers-variety $S$.  The  surface $S_\alpha$ contains $\Y$ and the projection is the blow-up through $\Y$.
\end{itemize}
\begin{center}
\begin{tikzcd}
& S \subset \PP^3 \times \PP^3 \arrow{dl}{}[swap]{\pi_a} \arrow{dr}{\pi_b}{} & \\
S_\beta \subset \PP^3  & & S_\alpha \subset \PP^3
\end{tikzcd}
\end{center}
Each $a \in S_\beta$ corresponds to a unique $b \in S_\alpha$ such that $(a,b) \in S$. 
\end{theorem}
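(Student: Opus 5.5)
The plan is to compute the rational map $\alpha\colon \PP^3\dashrightarrow\M_2^6$, $a\mapsto[(Ax_i)_{i=1}^6]$, explicitly in terms of the generators $t_0,\dots,t_5$ of $R_2^6$ from \S\ref{subsec:M26}, and to show it is birational onto a hypersurface of $\M_2^6$ cut out by a linear equation in $t_0,\dots,t_4$. The key observation concerns a bracket $[ijk]$ of projected points $Ax_i,Ax_j,Ax_k$. Up to a constant depending only on $A$, it equals $\det(a,x_i,x_j,x_k)$, which is linear in $a$. Hence each degree one generator, e.g. $t_0=[123][456]$, pulls back under $\alpha$ to a product of two planes through $\X$, that is, a quadric in $a$ containing all six points of $\X$. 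Quadrics through six generic points of $\PP^3$ form a $4$-dimensional space, so the five pullbacks $t_0(\Pcal(a)),\dots,t_4(\Pcal(a))$ satisfy exactly one linear relation $L_\X(t)=0$. Similarly, the degree two generator $t_5$ pulls back to a quartic in $a$. By Corollary~\ref{coro:indeterm}, $\alpha$ is a morphism off $\X$, and we may regard it as defined on $\mathrm{Bl}_\X\PP^3$.

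Next I would determine the image of $\alpha$. The linear system of quadrics through $\X$ maps $\PP^3$ onto the hyperplane $\{L_\X=0\}\cong\PP^3\subset\PP^4$ with degree $8-6=2$. On the other hand, $\M_2^6$ restricted to this hyperplane is the double cover $t_5^2=F$, branched along the quartic $F=0$ (six points on a conic). Since $\M_2^6\to\PP^4$ is itself $2:1$ via the Gale involution $t_5\mapsto -t_5$, I expect $\alpha$ to be birational onto the $3$-fold $W_\X=\M_2^6\cap\{L_\X=0\}$. In other words, the two preimages of a general point under the quadric map correspond to the two sheets, i.e.\ to Gale-associated configurations. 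I would verify this by checking that $t_5(\Pcal(a))$ is not a polynomial in the $t_i(\Pcal(a))$, $i\le 4$; concretely, it does not take equal values on the two points of a fiber. That check can be done by direct computation with $\X$ the five standard points \eqref{eq:std vectors} plus one generic point. The same argument gives $\beta$ birational onto $W_\Y=\M_2^6\cap\{L_\Y=0\}$.

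The centers-variety is then the closure of the fiber product of $\alpha$ and $\beta$ over $W_\X\cap W_\Y=\M_2^6\cap\{L_\X=L_\Y=0\}$. For generic $\X,\Y$ the two linear forms are independent, so this intersection is the double cover of $\PP^2$ branched along the plane quartic $F|_{\PP^2}$. For generic data that quartic is smooth, which gives a Del Pezzo surface of degree $2$, i.e.\ $\PP^2$ blown up in seven points. Pulling back, $\alpha^{-1}(W_\Y)$ is the zero locus of $L_\Y$ evaluated on the quadrics $t_i(\Pcal(a))$. This is a single quadric $S_\beta$ through $\X$, and $\alpha$ restricts to a birational morphism $\mathrm{Bl}_\X S_\beta\to W_\X\cap W_\Y$; symmetrically for $S_\alpha\ni b$. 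Birationality of $\alpha$ and $\beta$ then gives that each $a\in S_\beta$ determines a unique $b\in S_\alpha$.

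The main obstacle is upgrading ``birational'' to ``isomorphism'' and establishing the smoothness claims. I would first show that $S_\beta$ is a smooth quadric, e.g.\ by exhibiting a single instance computationally and invoking genericity. The blow-up of a smooth quadric at six general points is a Del Pezzo surface of degree $8-6=2$. Both it and the double plane are then smooth Del Pezzo surfaces of degree $2$. The map between them is birational, contracts no curves since $\alpha$ is finite on $\mathrm{Bl}_\X\PP^3$ away from the relevant loci, and is compatible with the anticanonical map to $\PP^2$; hence it is an isomorphism. This identifies $S$ with $\mathrm{Bl}_\X S_\beta\cong\mathrm{Bl}_\Y S_\alpha$, with $\pi_a,\pi_b$ the blow-down maps.
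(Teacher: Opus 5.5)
You take a genuinely different route from the paper. The paper is computational. It imposes $(q_0^{\X}(a):\cdots:q_5^{\X}(a))=(q_0^{\Y}(b):\cdots:q_5^{\Y}(b))$ in $\PP(1,1,1,1,1,2)$, saturates, and reads off $S$, $S_\beta$, $S_\alpha$ in Macaulay2. Only afterwards does it identify $S$, through its degree $26$ and the linear system $5L_1+8L_2-3\sum E_i$.

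Your argument explains what that computation finds. $S_\beta=\alpha^{-1}(W_{\Y})$ is a quadric through $\X$ for structural reasons. The degree-$2$ del Pezzo surface is the double plane $D=\M_2^6\cap\{L_{\X}=L_{\Y}=0\}$, and the restriction of $\alpha$ lifts to $D$ the anticanonical map $|2L_1+2L_2-\sum E_i|$ from $\mathrm{Bl}_{\X}S_\beta$ onto the plane $\{L_{\X}=L_{\Y}=0\}$.

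Two of your steps can be made computation-free:
\begin{itemize}
\item $\alpha$ is birational because the eight base points $\X\cup\{z,z'\}$ of a net of quadrics form a self-associated set. Hence the projections of $\X$ from $z$ and from $z'$ are Gale-associated, and $t_5$ changes sign.
\item The curves contracted by $\alpha$ are the twisted cubic through $\X$ and the $15$ lines $\langle x_i,x_j\rangle$. They have degree $0$ against $2H-\sum E_i$, so they miss $\mathrm{Bl}_{\X}S_\beta$ for generic $\Y$.
\end{itemize}

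There is a genuine gap in your last sentence. You establish $\mathrm{Bl}_{\X}S_\beta\cong D\cong\mathrm{Bl}_{\Y}S_\alpha$. But the centers-variety is a closure inside $\PP^3\times\PP^3$, namely the image of $D$ under $\Gamma=(\pi_{\X},\pi_{\Y})$, the product of the two blow-downs. You never show that $\Gamma$ is an embedding.

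Let $E_i\subset D$ be the exceptional curve over $x_i$ and $F_j\subset D$ the one over $y_j$. Two things can go wrong:
\begin{itemize}
\item If some $E_i=F_j$, then $\Gamma$ contracts it to $(x_i,y_j)$.
\item If $E_i\cdot F_j=2$ (on a degree-$2$ del Pezzo surface this happens exactly when $F_j=-K_D-E_i$), then $\Gamma$ glues two points or fails to be immersive.
\end{itemize}
Either way $S\not\cong D$. So you must prove $E_i\neq F_j$ and $E_i\cdot F_j\le 1$.

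One way uses Theorem~\ref{thm:n=5locus}. Along $E_i$, the class in $\M_2^5$ of the image points indexed by $\{1,\dots,6\}\setminus\{i\}$ is constant: it is the projection of $\X\setminus\{x_i\}$ from $x_i$. Along $F_j$, the same holds for the indices $\{1,\dots,6\}\setminus\{j\}$.
\begin{itemize}
\item If $E_i=F_j$ with $i\neq j$, normalize the four remaining points to a projective frame. Then the whole configuration is constant along the curve, which is absurd.
\item If $i=j$, then the projection of $\X\setminus\{x_i\}$ from $x_i$ would be equivalent to that of $\Y\setminus\{y_i\}$ from $y_i$, which fails generically.
\end{itemize}
Hence $\pi_{\X}(F_j)$ is a curve. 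By the $n=5$ result it is the twisted cubic $T_j\subset S_\beta$ through $\X\setminus\{x_j\}$ realizing that fixed class, and generically $T_j$ misses $x_j$. So $F_j$ is the strict transform of $T_j$, which gives $E_i\cdot F_j=1$ for $i\neq j$ and $E_j\cdot F_j=0$. These are the paper's curves $L_1+2L_2-\sum_{k\neq j}E_k$.

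With this in place, $\Gamma$ is an injective immersion of a projective surface. Therefore $S\cong D$, and $\pi_a$, $\pi_b$ are the blow-downs, as claimed.
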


Recall from \ref{subsec:M26} that the coordinate ring $R_2^6$ of the moduli space $\M_2^6$ is generated by the six invariants $t_0, \ldots, t_5$ in \eqref{eq:degree 1 gens R26} and \eqref{eq:degree 2 gen R26}, and that 
\begin{align} \label{eq:igusa quartic relation}
    t_5^2 = F(t_0,\ldots, t_4).  
\end{align}
The polynomial $F$ 
has the formula
\begin{align}
    \textup{F} := (-t_2t_3 + t_1t_4 + t_0t_1 + t_0t_4 - t_0t_2 - t_0t_3 - t_0^2)^2 - 4t_0t_1t_4(-t_0+t_1-t_2-t_3+t_4), 
\end{align}
and its zero set is the quartic hypersurface in $\PP^4$ called 
the {\em Igusa quartic} or the {\em Segre quartic primal},  
By \eqref{eq:igusa quartic relation}, for each $(t_0,\ldots,t_4)$, there are two points $(t_0,\ldots,t_4, \pm t_5) \in \M_2^6$. This means that $\M_2^6$ is a double cover of $\PP^4$ branched over the Igusa quartic. 
By \eqref{eq:igusa quartic relation} and \eqref{eq:degree 2 gen R26}, 
$F=0$ is the same as
\begin{align}\label{eq:6onconic}
   t_5 =  [123][145][246][356]-[124][135][236][456]=0.
\end{align}
This is precisely the condition that six points in $\PP^2$ lie on a conic. Thus, there is a $2:1$ map 
\begin{align}\M_2^6 \,\,\stackrel{2:1}{\rightarrow} \,\,\PP^4.
\end{align}
A general fiber of this map consists of two points in $\M_2^6$; they represent associated pairs of $6$-tuples in $\PP^2$. The fiber of a point on the Igusa quartic corresponds to a $6$-tuple in $\PP^2$ that lies on a conic. Such $6$-tuples are self-associated. Indeed, note that for every bracket in the first term of  
\eqref{eq:6onconic} the complementary bracket appears in the second term. See \cite[Example 11.7]{DolgachevGIT} for these facts.

\subsection{A strategy to find the centers-variety} \label{subsec:strategy from flatland}
Using the generators of $R_2^6$, define the vector $t:=(t_0, \ldots, t_5)$. A generating set for $R_2^6$ is able to {\em separate} $\SL(3)$-orbits of six ordered points in $\PP^2$ in the following sense. If $\Pcal$ and $\Qcal$ are two sets of six ordered generic points in $\PP^2$ with representatives $P$ and $Q$, then $\Pcal \sim \Qcal$ if and only if 
\begin{align}
t(P) := (t_0(P), \ldots, t_5(P))= t(Q) := (t_0(Q), \ldots, t_5(Q)).
\end{align} 
 
We now describe a strategy used in \cite{flatlandpaper} to find the centers-variety. Suppose $\X \subset \PP^3$ is projected to $\Pcal \subset \PP^2$ through $a \in \PP^3\setminus\X$, and similarly, $\Y$ is projected to $\Qcal \subset \PP^2$ through $b\in \PP^3\setminus\Y$. For a bracket 
$[ijk]_p$ evaluated at $p_i,p_j,p_k \in \Pcal$, define $[ijka]_x$ to be the bracket evaluated at $x_i, x_j, x_k, a$, and set
\begin{align}
    t'(\X,a) := \left( \underbrace{[345a]_x[145a]_x[125a]_x[123a]_x[234a]_x}_{t_0'(\X,a)}, \,\,\,\cdots \,\,\,, \underbrace{[235a]_x[123a]_x[234a]_x[145a]_x^2}_{t_5'(\X,a)} \right).
\end{align}
We refer to $t_i'(\X,a)$ as a `lifting' of $t_i$; note that $t_i'(\X,a)$ is evaluated on $\X \cup \{a\}$ while $t_i$ is evaluated on 
$\Pcal$ where $\Pcal$ is the projection of $\X$ through $a$. 
Similarly, for $[ijk]_q$ evaluated at $q_i,q_j,q_k \in \Qcal$, define $[ijkb]_y$ to be the bracket evaluated at $y_i, y_j, y_k, b$, and set 
\begin{align}
    t'(\Y,b) = (t_0'(\Y,b), \cdots, t_5'(\Y,b)).
\end{align}

The following theorem  shows the usefulness of these `lifted' invariants. 

\begin{theorem} \cite[Theorem 16]{flatlandpaper} 
\label{thm:pull back tool}
For generic $\X$ and $\Y$, the projections of $\X$ through $a$ and $\Y$ through $b$ are $\SL(3)$-equivalent, i.e., correspond to the same point of $\M_2^6$, 
if and only if $t'(\X, a)= t'(\Y,b)$.
\end{theorem}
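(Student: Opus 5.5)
The plan is to reduce the statement to the fact that $t=(t_0,\ldots,t_5)$ separates stable $\SL(3)$-orbits of six points in $\PP^2$. For this I need one elementary identity relating brackets of projected points to $4\times 4$ brackets in $\PP^3$. Let $A$ be a camera with center $a$, and choose a row vector $w$ with $w\cdot a\neq 0$. Then $M=\begin{pmatrix}A\\ w\end{pmatrix}$ is invertible, and
$$M\,[x_i\; x_j\; x_k\; a]=\begin{pmatrix}Ax_i & Ax_j & Ax_k & 0\\ * & * & * & w\cdot a\end{pmatrix}.$$
Taking determinants gives
$$\det[Ax_i\,Ax_j\,Ax_k]=\frac{\det M}{w\cdot a}\,[ijka]_x=:\lambda_A\,[ijka]_x,$$
where the constant $\lambda_A\neq 0$ depends only on $A$ (and $a$), not on $i,j,k$. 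Consequently, if we take the representative $P=(Ax_1,\ldots,Ax_6)$ of the projection $\Pcal$, each bracket $[ijk]_p$ equals the lifted bracket $[ijka]_x$ times the common factor $\lambda_A$.

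Next I would track the two kinds of scalings that occur. The invariants $t_0,\ldots,t_4$ are products of two brackets, and $t_5$ is a difference of products of four brackets; each is multihomogeneous, of degree $1$ in every point for $t_0,\ldots,t_4$ and of degree $2$ in every point for $t_5$. So the camera factor multiplies $(t_0,\ldots,t_4,t_5)$ by $(\lambda_A^2,\ldots,\lambda_A^2,\lambda_A^4)$. Likewise, replacing $p_i$ by $\mu_ip_i$ multiplies $(t_0,\ldots,t_4,t_5)$ by $(\mu,\ldots,\mu,\mu^2)$ with $\mu=\prod\mu_i$, and the same holds when the $x_i$ are rescaled in the lifted brackets. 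The specific normalized representative implicit in the displayed formula for $t'$ (the extra bracket factors such as in $t_0'$ come from choosing affine-chart representatives of the projected points, as in the $n=5$ computation) differs from $(Ax_i)$ only by such per-point scalars $\mu_i$, which are themselves products of lifted brackets. Hence $t'(\X,a)$ and $t(P)$ agree as points of the weighted projective space $\PP(1,1,1,1,1,2)$, which contains $\M_2^6$; the same argument gives this for $t'(\Y,b)$ and $t(Q)$. Equality ``$t'(\X,a)=t'(\Y,b)$'' is accordingly to be read in $\PP(1,1,1,1,1,2)$, i.e.\ up to a weighted scalar.

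Given this, the equivalence follows from orbit separation. For generic $\X,\Y$ and $a\notin\X$, $b\notin\Y$, Corollary~\ref{coro:indeterm} shows that the projections are semistable, so $\pi_{2,6}$ is defined on them. Off a proper closed set (where four image points become collinear or two coincide), they are in fact stable by the stability corollary for six points. For stable tuples, the fibre of $\pi_{2,6}$ contains a single orbit, so $\Pcal\sim\Qcal$ if and only if $t(P)=t(Q)$ in $\PP(1,1,1,1,1,2)$, and by the previous paragraph this holds if and only if $t'(\X,a)=t'(Y,b)$.

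The main obstacle I expect is bookkeeping rather than a conceptual step. One part is identifying exactly which normalization produces the stated $t'_i$, and checking that the extra bracket factors are the same for every coordinate after weighting: degree one for $t_0,\ldots,t_4$ and degree two for $t_5$. The other part is justifying that the ``generic'' hypothesis excludes the strictly semistable projections, where $t$ fails to separate orbits. The first part I would verify by expanding $t_0$ and $t_5$ explicitly on the chart representatives.
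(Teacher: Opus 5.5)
The paper gives no proof of this statement; it imports it from \cite[Theorem 16]{flatlandpaper}. Your core argument is correct and is the natural one. The identity $\det[Ax_i\,Ax_j\,Ax_k]=\lambda_A[ijka]_x$ has a factor $\lambda_A\neq 0$ independent of $i,j,k$. Combined with the multihomogeneity of $t_0,\dots,t_5$, it shows that $t(A\X)$ and the lifted vector define the same point of $\PP(1,1,1,1,1,2)$. Since $\M_2^6=\mathrm{Proj}(R_2^6)$ sits in $\PP(1,1,1,1,1,2)$ via $t_0,\dots,t_5$, the equivalence with ``same point of $\M_2^6$'' is immediate. Stability then upgrades this to genuine $\SL(3)$-equivalence.

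You should remove the paragraph claiming that the displayed formulas for $t'_0$ and $t'_5$ come from affine-chart normalizations differing from $(Ax_i)$ by per-point scalars; that claim is false. As printed, $t_0'=[345a]_x[145a]_x[125a]_x[123a]_x[234a]_x$ and $t_5'=[235a]_x[123a]_x[234a]_x[145a]_x^2$ have three problems:
\begin{itemize}
\item they do not involve $x_6$ at all;
\item $t'_5$ is a single monomial while $t_5$ is a binomial;
\item both have degree $5$ in $a$, so they are not even weighted-homogeneous of weights $1$ and $2$.
\end{itemize}
So no rescaling relates them to $t(P)$, and they cannot be what the theorem means. They are a misprint: each index $1,\dots,5$ occurs three times, the shape of an $\M_2^5$-invariant.

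The intended definition is the one the paper actually uses in the proof of Theorem~\ref{thm:n=6locus}: replace every $[ijk]$ in $t_i$ by $[ijka]_x$, which gives the $q_i^{\X}(a)$. With that definition your identity yields $t(A\X)=(\lambda_A^2t'_0,\dots,\lambda_A^2t'_4,\lambda_A^4t'_5)$ directly, and no chart bookkeeping is needed.

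On genericity: no four points of a generic $\X$ are coplanar, so four image points are never collinear. Hence the only non-stable projections come from $a\in 2\X\setminus\X$. The statement as phrased (same point of $\M_2^6$) holds for every $a\notin\X$, $b\notin\Y$ with no stability hypothesis at all.
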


The equality $t'(\X, a)= t'(\Y,b)$ will give us the equations 
for the centers-variety. 

\subsection{A computational proof of Theorem~\ref{thm:n=6locus}}
We use the strategy in \S~\ref{subsec:strategy from flatland} to prove Theorem~\ref{thm:n=6locus}.
Fix a collection of six points $\X=\{x_1, 
\ldots, x_6\}$ in $\PP^3$ and set  
up the matrix 
\begin{align}
    M_\X(z) := \begin{bmatrix} x_1 & x_2 & x_3 & x_4 & x_5 & x_6 & z \end{bmatrix}
\end{align}
where $z = (z_0,z_1,z_2,z_3)^\top$ is a column of variables representing an unknown point in $\PP^3$. Lift each bracket polynomial $t_0, \ldots, t_5$ to 
$\PP^3$ by sending $[i\,j\,k] \mapsto [i\,j\,k\,7]$ (e.g. $t_0 = [123][456] \mapsto [1237][4567]$), and evaluate the lifted bracket polynomials on  the columns of 
$M_\X(z)$. Call these polynomials $q_0^\X(z), \ldots, q_5^\X(z)$. For $i=0, \ldots, 4$, 
$q_i^\X(z)$ is quadratic in $z_0,\ldots,z_3$ with coefficients depending on $\X$, while 
$q_5^\X(z)$ is a quartic in $z$ with coefficients determined by $\X$. 

Since the union of the indices in each of $t_0, \ldots, t_4$ is $\{1,\ldots,6\}$, 
$q_0^\X(z), \ldots, q_4^\X(z)$ will vanish if we set $z=x_i$ for any $i$. Therefore, $q_0^\X(z)=0, \ldots, q_4^\X(z)=0$ define five quadrics in $\PP^3$ containing $\X$. 
Since there can be at most four linearly independent 
quadrics through $\X$, there exists scalars 
$\alpha_0, \ldots, \alpha_4$ such that 
$\sum_{i=0}^4 \alpha_i q_i^\X(z) = 0$ for all $z \in \PP^3$. 
Repeating this process with   
$\Y = \{y_1, \ldots, y_6\} \subset \PP^3$ 
we obtain another five quadrics $q_0^\Y(z)=0, \ldots, q_4^\Y(z)=0$ containing $\Y$, and scalars $\beta_0, \ldots, 
\beta_4$ such that $\sum_{i=0}^4 \beta_i q_i^\Y(z) = 0$ for all $z \in \PP^3$. Let $S_\beta$ be the quadric surface in $\PP^3$ cut out by $\sum_{i=0}^4 \beta_i q_i^\X(z) = 0$, and $S_\alpha$ be the quadric surface in $\PP^3$ cut out by 
$\sum_{i=0}^4 \alpha_i q_i^\Y(z) = 0$.
Note the switchings!

\noindent{\em Proof of Theorem~\ref{thm:n=6locus}}:
Assume that $\X = \{x_1, \ldots, x_6\} \subset \PP^3$ and $\Y = \{y_1, \ldots, y_6\} \subset \PP^3$ are generic and $p_i = Ax_i$ and $q_i =By_i$ are the projections of $x_i$ and $y_i$ under two cameras $A,B$ with centers $a,b$ respectively. We can assume without loss of generality that the first five points in both $\X$ and $\Y$ are the standard points 
\eqref{eq:std vectors}.
Let $\Pcal = \{p_1, \ldots, p_6\} \subset \PP^2$ and 
$\Qcal = \{q_1, \ldots, q_6 \} \subset \PP^2$ and $t_i^\Pcal$ (respectively, $t_i^\Qcal$) denote the evaluation of $t_i$ at $\Pcal$ (respectively, $\Qcal$). If $\Pcal$ and $\Qcal$ correspond to the same point in $\M_2^6$ then  
$(t_0^\Pcal: \cdots : t_5^\Pcal) = (t_0^\Qcal: \cdots : t_5^\Qcal)$ 
which, by Theorem~\ref{thm:pull back tool}, is equivalent to 
\begin{align} \label{eq:equality of lifted polys}
(q_0^\X(a): \cdots : q_5^\X(a))= (q_0^\Y(b): \cdots : q_5^\Y(b)).
\end{align}     
In particular, $(q_0^\X(a): \cdots : q_4^\X(a)) = (q_0^\Y(b): \cdots : q_4^\Y(b))$ (dropping the last  coordinate on both sides in \eqref{eq:equality of lifted polys}), 
which implies that $\sum_{i=0}^4 \beta_i q_i^\X(a) = 0$ and $a$ lies on the quadric surface $S_\beta$ cut out by $\sum_{i=0}^4 \beta_i q_i^\X(z) = 0$ in $\PP^3$. Similarly,  
$\sum_{i=0}^4 \alpha_i q_i^\Y(b) = 0$ and $b$ lies on the quadric surface $S_\alpha$ cut out by 
$\sum_{i=0}^4 \alpha_i q_i^\Y(z) = 0$ in $\PP^3$.

Recall that each side of \eqref{eq:equality of lifted polys} lies in $\PP(1,1,1,1,1,2)$  and hence, 
\begin{align} 
\frac{q_i^\X(a)}{q_i^\Y(b)} = \frac{q_j^\X(a)}{q_j^\Y(b)}, \,\,\forall 0 \leq i < j \leq 4\\
\left(\frac{q_i^\X(a)}{q_i^\Y(b)} \right)^2 = \frac{q_5^\X(a)}{q_5^\Y(b)}, \forall i=0, \ldots, 4
\end{align} 
The $15$ equations above define an ideal $J$ in the variables $a$ and $b$. To remove the components at which $q_i^\X(a)$ and $q_i^\Y(b)$ vanish, we saturate $J$ with the product of the ideals 
\begin{align}
    \langle q_0^\X(a), \ldots, q_4^\X(a)\rangle \textup{ and } 
    \langle q_0^\Y(b), \ldots, q_4^\Y(b)) \rangle 
\end{align}
to obtain an ideal $I$ in $a,b$. The ideal $I$ has codimension $4$, degree $15$, and is generated by two quadrics, four cubics and $15$ quartics.  
The variety of $I$, a surface $S \subset \PP^3 \times \PP^3$, is the locus of $(a,b)$, i.e., the centers-variety. 
Eliminating $b$ from $I$, we obtain  $\langle \sum_{i=0}^4 \beta_i q_i^\X(a) \rangle$ and eliminating $a$ from $I$ we obtain $\langle \sum_{i=0}^4 \alpha_i q_i^\Y(b)\rangle$. 
Thus, the surface $S$ projects onto the surfaces $S_\beta$ and $S_\alpha$, sending 
$(a,b) \in S$ to $a \in S_\beta$ and $b \in S_\alpha$ via projections $\pi_a$ and $\pi_b$. 
\begin{center}
\begin{tikzcd}
& S  \subset \PP^3 \times \PP^3 \arrow{dl}{}[swap]{\pi_a} \arrow{dr}{}{\pi_b} & \\
S_\beta \subset \PP^3 && S_\alpha \subset \PP^3
\end{tikzcd}
\end{center}
All of the above claims can be checked in Macaulay2. \qed

\begin{remark}\label{rem:2112}
Embedding the surface $S$ in the Segre embedding of $\PP^3 \times \PP^3$ in 
$\PP^{15}$ yields a surface of degree $26$ generated by $75$ quadrics. The multidegree of $S$ in $\PP^3\times\PP^3$ is $(2, 11, 2)$. These assertions can also be verified in Macaulay2. 
\end{remark}

\begin{remark} {\bf A synthetic description.}
\label{rem:using the previous case}
There is always a way to get a sense of the answer to the Question for pairs of $n$ points by invoking the results for pairs of $(n-1)$ points as we now explain. 

If we pick 
$a \in \PP^3 \setminus \X$, and any five of the six points in $\Y$, then by Theorem~\ref{thm:n=5locus}, $b$ lies on a unique twisted cubic curve through these five $y$-points. Repeating this for all six subsets of five points in $\Y$, we obtain six twisted cubic curves in  $\PP^3$. When $n=6$,  the point $b$ must lie on all these twisted cubics, providing a necessary condition for the locus of $b$. Note that this logic does not tell us anything about the quadric surfaces $S_\beta$ and $S_\alpha$ that $a$ and $b$ lie on. 
However, given the answer for $n=6$, we can use the $n=5$ result to give a synthetic construction of the $b$-locus in the $n=6$ case. 
\begin{enumerate}
\item If $a\notin S_{\beta}$ then the six twisted cubics do not meet.

\item If $a\in S_{\beta}\setminus\X$ then the six twisted cubics meet in a point in $S_{\alpha}$ which is the $b$ such that $(a,b) \in S$.
In fact, if two of the six twisted cubics meet, then all six meet. Note that the individual twisted cubics are not contained in $S_\alpha$,
their intersection with $S_\alpha$ consists of six points, five of which are points in $\Y$. Therefore, the common intersection of the six twisted cubics can be checked by finding their sixth point of intersection with $S_\alpha$.

\item If $a$ is one of the six points in $\X$,  then there is only one twisted cubic, and it is contained in $S_\alpha$. One may wonder how we manage the points in $\X$, since the projection from $a\in\X$ is not defined. By Theorem \ref{thm:n=6locus} we get that $\X\subset S_\beta$, which  means that when
$a$ approaches $x_i\in\X$, from several directions along $S_\beta$, the corresponding points $b$ have a limit giving exactly the pairs $(x_i,\tilde b)$
with $\tilde b$  on a twisted cubic. In other words, 
 when $a$ approaches $x_i\in\X$, the six twisted cubics from 
 (2) have the same limit, which is again a twisted cubic!

\end{enumerate}
\end{remark} 

Having seen that the centers-variety $S$ is a quadric surface in $\PP^3 \times \PP^3$, we now give a more detailed description of it and show  precisely how it is related to the two smooth quadrics $S_\beta$ and $S_\alpha$. 

\begin{theorem}
\begin{enumerate}
\item The centers-variety $S$ is the image of a linear system of bidegree $(5,8)$ on a smooth quadric, passing with multiplicity three through six general points (denoted in standard notations as $5L_1+8L_2-3\sum_{i=1}^6E_i$). It has degree $26$ after composing with the Segre embedding $\PP^3\times\PP^3$ in $\PP^{15}$.
\item Moreover, $S$ is the closure of the graph of the birational map $S_\beta \to S_\alpha$ given by the linear system
$4L_1+7L_2-3\sum_{i=1}^6E_i$, where each $E_i$  (obtained by blowing-up $x_i$) is embedded as a twisted cubic and each of the six twisted cubics
$L_1+2L_2-\sum_{j\neq i} E_j$ is blown down to $y_i$.
\end{enumerate}
\end{theorem}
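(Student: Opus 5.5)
We parametrize the centers-variety $S$ by the smooth quadric $S_\beta$. By Theorem~\ref{thm:n=6locus}, $\pi_a\colon S\to S_\beta$ is birational and is the blow-up at $\X$. So $S\cong \widetilde{S_\beta}$, the quadric blown up at $x_1,\dots,x_6$. Let $L_1,L_2$ be the two rulings and $E_i$ the exceptional curves. In these coordinates the map $S\to\PP^3\times\PP^3$ is $(\pi_a,\pi_b)$. The first factor $\pi_a$ is given by the hyperplane class $L_1+L_2$ of $S_\beta$. Part (1) follows from part (2), because the bidegree-$(1,1)$ pullback of $\OO(1,1)$ is $(L_1+L_2)+(4L_1+7L_2-3\sum E_i)=5L_1+8L_2-3\sum E_i$.

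The degree then follows from the intersection numbers $L_1^2=L_2^2=0$, $L_1L_2=1$, $E_i^2=-1$:
\[
(5L_1+8L_2-3\textstyle\sum E_i)^2 = 80-54 = 26.
\]
This agrees with Remark~\ref{rem:2112}. The multidegree is also consistent: $(L_1+L_2)^2=2$, $(L_1+L_2)\cdot D=11$ and $D^2=56-54=2$ for $D=4L_1+7L_2-3\sum E_i$. This matches $(2,11,2)$ and confirms that the second projection also has degree $2$ onto the quadric $S_\alpha$.

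The main work is identifying the class $D=\pi_b^*\OO(1)$. Write $D=uL_1+vL_2-\sum m_iE_i$. By the $\mathfrak S_6$ symmetry of the construction (for generic data, monodromy permutes the indices), all $m_i$ equal a common value $m$. Remark~\ref{rem:using the previous case}(3) says that $E_i$ maps to a twisted cubic in $S_\alpha$, so $D\cdot E_i=m=3$. The image $S_\alpha$ is a quadric and $\pi_b$ is birational, so $D^2=2$. Remark~\ref{rem:2112} gives $(L_1+L_2)\cdot D=u+v=11$. Then $2uv-54=2$ gives $uv=28$, so $\{u,v\}=\{4,7\}$, and we name the rulings so that $D=4L_1+7L_2-3\sum E_i$.

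To make this argument independent of the computation in Remark~\ref{rem:2112}, I would derive $u+v=11$ differently. Restrict to a general plane section $C$ of $S_\beta$, which is a conic. Compute the degree of its image under $\pi_b$ using the lifted invariants $q_i^\X$, which are quadrics through $\X$, together with the explicit formula of the map. This step, pinning down $u+v$ (equivalently the degree $11$ of the image of a hyperplane section), is the one I expect to be hardest without computer assistance. As a fallback I would verify it in Macaulay2 by pulling back a general hyperplane in $b$ through the ideal $I$.

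It remains to find the contracted curves. For $\Gamma_i:=L_1+2L_2-\sum_{j\neq i}E_j$ we have:
\[
\Gamma_i^2=4-5=-1,\qquad K\cdot\Gamma_i=-2\cdot 2-2\cdot 1+5=-1,\qquad D\cdot\Gamma_i=8+7-15=0.
\]
Hence $\Gamma_i$ is a $(-1)$-curve contracted by $\pi_b$. On $S_\beta$ it is a curve of class $(1,2)$ through the five points $x_j$ with $j\ne i$, that is, a twisted cubic, and such a curve exists and is unique. The six curves $\Gamma_i$ are pairwise disjoint, since $\Gamma_i\cdot\Gamma_k=4-4=0$.

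After contracting them, the total transform $D+\sum_i\Gamma_i$ is the hyperplane class $L_1'+L_2'$ of a quadric whose six exceptional curves are the $\Gamma_i$. The symmetric roles of $\X$ and $\Y$, via Theorem~\ref{thm:n=6locus}, show that this quadric is $S_\alpha$ blown up at $\Y$. The image point $\pi_b(\Gamma_i)$ is $y_i$: points of $\Gamma_i$ are exactly those $a$ for which the projected sextuple degenerates in the way that matches $b=y_i$, and the index bookkeeping $\Gamma_i\cdot E_j=1$ for $j\ne i$ is consistent with this. Finally, $S$ is the closure of the graph of $\pi_b\circ\pi_a^{-1}$, which gives part (2).
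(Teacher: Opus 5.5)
Your proof is correct and follows the paper's basic strategy: write the embedding class on the blown-up quadric, get multiplicity $3$ from the twisted cubics $E_i$, fix the remaining coefficients numerically, and pass between (1) and (2) via $L_1+L_2$. You run it in the opposite direction and with different numerical input.

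The paper determines the full class $aL_1+bL_2-c\sum E_i$ of $\OO(1,1)$. The Segre degree $26$ only yields $ab=40$, so the paper needs an extra Macaulay2 computation of the images of the two rulings to reach $(5,8)$. It then asserts the contraction of the curves $L_1+2L_2-\sum_{j\ne i}E_j$ by symmetry.

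You determine $D=\pi_b^*\OO(1)$ first, where $D^2=2$ (birational onto a quadric) is a sharper constraint. You also actually check that the $\Gamma_i$ are disjoint $(-1)$-curves with $D\cdot\Gamma_i=0$.

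The step you flagged as hardest is in fact free. For any birational morphism $\pi_b$ from the smooth surface $S$ onto the smooth quadric $S_\alpha$, one has $K_S=\pi_b^*K_{S_\alpha}+R$ with $R$ supported on $\pi_b$-exceptional curves. Hence $-K_S\cdot D=-K_{S_\alpha}\cdot H=4$. With $-K_S=2L_1+2L_2-\sum E_i$ this reads $2(u+v)-18=4$, i.e.\ $u+v=11$. So, granting the structural part of Theorem~\ref{thm:n=6locus}, your route needs no computer input at all. The multidegree $(2,11,2)$ and the degree $26$ of Remark~\ref{rem:2112} then become consequences rather than inputs.

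Two points need fixing.

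First, the claim that the total transform $D+\sum_i\Gamma_i$ is the hyperplane class of the contracted quadric is false: $(D+\sum_j\Gamma_j)\cdot\Gamma_i=-1$ and $(D+\sum_j\Gamma_j)^2=-4$. Since $D\cdot\Gamma_i=0$, it is $D$ itself that is the pullback of the hyperplane class of $S_\alpha$. The relation you want is $-K_S+\sum_i\Gamma_i=2D$. This error is not load-bearing, because the blow-up structure of $\pi_b$ is taken from Theorem~\ref{thm:n=6locus}.

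Second, $\pi_b(\Gamma_i)=y_i$ can be proved rather than motivated by bookkeeping:
\begin{enumerate}
\item The $\Gamma_k$ are exactly the six exceptional curves of $\pi_b$, so $\{\pi_b(\Gamma_k)\}=\Y$.
\item The curve $\pi_b(E_i)$ is the $n=5$ twisted cubic through $\Y\setminus\{y_i\}$.
\item $E_i\cap\Gamma_i=\emptyset$ (as $E_i\cdot\Gamma_i=0$) and $\pi_b^{-1}(\pi_b(\Gamma_i))=\Gamma_i$ give $\pi_b(\Gamma_i)\notin\pi_b(E_i)$.
\end{enumerate}
Together these force $\pi_b(\Gamma_i)=y_i$.
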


\begin{proof}
   The linear system in (1) is given by
    the line bundle on the smooth quadric surface $aL_1+bL_2-c\sum_{i=1}^6E_i$ with the nonnegative integers $a, b, c$ to be determined. The integer $c$ is the same for all six points by the symmetry of the construction. The fact that the exceptional divisors are twisted cubics implies that $c=3$. The degree $26$ seen in Remark \ref{rem:2112} implies that $2ab-6\cdot 9=26$ which implies that $ab=40$. By intersecting the system with the two lines on the quadric we get $a=5$, $b=8$, with the help of Macaulay2, see the ancillary file \verb|6pts-quadrics.m2| in the arXiv submission. 
    Item (2) is obtained from (1) by subtracting the hyperplane divisor $L_1+L_2$, this
    is typical of all graphs $(x, f(x))$ of a map $f$, since the identity component $x$ counts as a hyperplane divisor. The blow-up and blow-down of the six twisted cubics are guaranteed by the symmetry of the construction with respect to the two factors.
\end{proof}

 The surface $S$ is the blow-up of $S_\beta$ in the six points of $\X$ whose exceptional divisors are six twisted cubics in $S$, call the collection $\mathcal{T}_\X$. Similarly, $S$ is also the blow-up of $S_\alpha$ in $\Y$ whose exceptional divisors are another set of six twisted cubics, $\mathcal{T}_\Y$ on $S$. 
The birational map from $S_\beta$ to $S_\alpha$ first blows-up $S_\beta$ at $\X$ and then blows-down $\mathcal{T}_\Y$ to $S_\alpha$.
\begin{remark}
The six curves
$2L_1+L_2-\sum_{j\neq i} E_j$ on $S$ are blown down to six twisted cubics on $S_\alpha$. Each of them is a twisted cubic through five points among $\{y_1,\ldots, y_6\}$. Note that on a smooth quadric surface there are exactly two twisted cubics through five general points, of bidegree $(2,1)$ and $(1,2)$ respectively.
\end{remark}

\begin{remark}\label{rem:s'assoc}
  As a complement to Theorem \ref{thm:n=6locus}, the locus of $(a, b)$ for which there are two cameras $A,B$ with centers $a,b$ such that 
$(Ax_1,\ldots, Ax_6)$
is associated (in the sense of \S\ref{subsec:association}) to  $(By_1,\ldots, By_6)$ is again a surface $S'$ in $\PP^3 \times \PP^3$, isomorphic to a smooth quadric surface blown-up in six points. The surface $S'$ is the image of a linear system of bidegree $(2,5)$ on a smooth quadric, passing at six general points (denoted in standard notations as $2L_1+5L_2-\sum_{i=1}^6E_i$) and it has degree $14$ after composing with the Segre embedding $\PP^3\times\PP^3\to\PP^{15}$.
The surface $S'$ is the closure of the graph of the birational map $S_\beta\to S_\alpha$ given by the linear system
$L_1+4L_2-\sum_{i=1}^6E_i$, where each $E_i$ is embedded as a line and each of the six lines
$L_1-E_i$ are blown down to $y_i$.
\end{remark}

\begin{remark}\label{rem:delpezzo2} As an abstract surface, $S$ is isomorphic to $\PP^2$ blown up at seven points, hence it is a Del Pezzo surface.
If $H,F_1,\ldots, F_7$ is the standard basis of this blow-up,
with $H^2=1$ and $F_i^2=-1$,
the vocabulary is $L_1=H-F_1, L_2=H-F_2$, $E_1=H-F_1-F_2$, $E_i=F_{i+1}$
for $i=2,\ldots, 6$.
Hence $S$ is embedded in $\PP^3\times\PP^3$, composed with the Segre embedding, with the linear system 
$10H-2F_1-5F_2-3\sum_{i=3}^7F_i$.
The anticanonical bundle is $-K_S=3H-\sum_{i=1}^7F_i=2L_1+2L_2-\sum_{i=1}^6E_i$.
The Hilbert polynomial of $S$ with respect to the line bundle $\OO(1,1)$ is
$\chi(\OO_S(t))=13t^2+4t+1=26P_2(t)-35P_1(t)+10$.
The surface $S'$ of Remark \ref{rem:s'assoc} is isomorphic to $S$ 
and it is correspondingly embedded with the linear system $6H-F_1-4F_2-\sum_{i=3}^7F_i$.
\end{remark}

A further interesting study comes from self-associated $6$-tuples.
Any six points in $\PP^3$ determine a Weddle quartic surface, which is the locus of points that see the six points as lying on a conic. See \S~\ref{subsec:Weddle} for Weddle surfaces.
So we have two Weddle surface, one in each $\PP^3$ containing the centers $a$ and $b$. These Weddle surfaces are singular at the six points $\X$ (respectively, $\Y$), and contain the twisted cubic through the six points, which in general is not on the quadric $S_\beta$ ($S_\alpha$). Cutting with the quadrics, we get two singular curves of degree $8$ and bidegree $(4,4)$ on the Weddle surfaces.
Their genus is $4\cdot 4-4-4+1-6=3$, see \cite[exercise III 5.6 c]{Hart}. The two curves correspond to each other under the birational map from $S_\beta$ to $S_\alpha$.
This genus $3$ curve is desingularized in the surface $S\subset\PP^3\times\PP^3$, and it has class $(4L_1+4L_2-2\sum E_i)$. 
Its degree is indeed $(4L_1+7L_2-3\sum E_i)(4L_1+4L_2-2\sum E_i) = 16+28-36=8$ in $\PP^3$
and $(5L_1+8L_2-3\sum E_i)(4L_1+4L_2-2\sum E_i) = 20+32-36=16$ in $\PP^{15}$. It can be seen as the intersection
of the surface $S$ with the surface $S'$ of Remark \ref{rem:s'assoc}.

Let $H\subset\PP^3$ be a general hyperplane. We compute that
$\pi_a(\pi_b^{-1})(H\cap S_\alpha)$
is a curve of bidegree $(4,7)$ on $S_\beta$, which passes through each $x_i$ with multiplicity $3$. Moreover, if $p\in S_\alpha$ is a general point, we compute that $\pi_a(\pi_b^{-1})(p)$
is a unique point on $S_\beta$.
At the same time  $\pi_a(\pi_b^{-1})(y_i)$ is a twisted cubic for $i=1,\ldots, 6$.
It follows that $\pi_\alpha$ and $\pi_\beta$ are both blow-ups at finitely many points. 

A candidate for the linear system
embedding the blow-up of the quadric
$S_\beta$ is $L=5L_1+8L_2-3\sum_{i=1}^6E_i+R$ where $R=\sum n_jE_j$ is a residual supported at some points.
The equation $L^2=26$ implies
$40+40-54+R^2=26$ hence $R^2=0$, which implies $\sum n_j^2=0$ and eventually $R=0$. The surface $S$ can be identified with the image of $S_\beta$ under the linear system $5L_1+8L_2-3\sum_{i=1}^6E_i$. The other properties stated in Theorem \ref{thm:n=6locus} are now straightforward.

\section{$n=7$ and $n=8$}
\label{sec:n=7}

We now come to the last two cases we need to consider for the Question from the Introduction. 

\begin{theorem}
    \label{thm:n=7locus}
Let $\X = \{x_1,\dots,x_7\}$ and $\Y = \{y_1, \dots, y_7\}$  be two sets of seven generic ordered points in $\PP^3$. 
Their centers-variety consists of three distinct points.
\end{theorem}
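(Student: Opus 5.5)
Since $\X$ and $\Y$ are generic, I will normalize as in \S\ref{sec:n=5} so that the first five points of each are the standard points \eqref{eq:std vectors}. The sixth and seventh points $x_6,x_7,y_6,y_7$ then remain free.

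The plan is to reduce to the case $n=6$ twice. First apply Theorem~\ref{thm:n=6locus} to the subsets $\{1,\dots,6\}$. It says that any $(a,b)$ in the centers-variety lies on the surface $S_{6}\subset\PP^3\times\PP^3$, which is the closure of the graph of the birational map $\phi_6\colon S_\beta\to S_\alpha$ between two smooth quadrics. Next apply the same theorem to $\{1,\dots,5,7\}$. This gives a second surface $S_7$, the graph of $\phi_7\colon S'_\beta\to S'_\alpha$. The centers-variety is contained in $S_6\cap S_7$.

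The center $a$ must lie on $S_\beta\cap S'_\beta$. Both quadrics contain $x_1,\dots,x_5$, so this intersection is a quartic elliptic curve $C_a$ through those five points; likewise $b\in C_b=S_\alpha\cap S'_\alpha$. On $C_a$ I then have two maps to $\PP^3$, namely $\phi_6|_{C_a}$ and $\phi_7|_{C_a}$. The centers-variety is the set of points $a\in C_a$ where the two maps agree and the common value lies on $C_b$.

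I will count these points via intersection theory on the blow-up of the quadric. Using the class $4L_1+7L_2-3\sum E_i$ of $\phi_6$, I will compute the class of $\pi_a(S_6\cap S_7)$ by intersecting $S_6$ and $S_7$ inside $\PP^3\times\PP^3$. I expect the intersection of these two surfaces in $\PP^3\times\PP^3$ to contain large excess components supported over $\X\times\Y$: the twisted cubics $E_i$ lying over $x_i$ for $i\le5$, and their counterparts over $y_i$. So a naive product of classes will overcount. The honest count is the residual intersection after removing these components. Theorem~\ref{thm:loci} shows the centers-variety is nonempty, since its dimension is at least $14-2\cdot7=0$, and this residual count should come out to $3$.

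The main obstacle is carrying out this excess-intersection computation rigorously and showing the three points are distinct and reduced. Because of that, I would rather prove the statement along the route the paper has set up: build invariants separating general orbits in $\M_2^7$ from the $15$ degree-one generators, which give the Goepel variety in $\PP^{14}$. Lift these brackets to $\PP^3$ via $[ijk]\mapsto[ijk\,a]$, as in Theorem~\ref{thm:pull back tool}. Then write the equations $t'(\X,a)=t'(\Y,b)$ and saturate by the ideals of the base loci $2\X$ and $2\Y$ from Corollary~\ref{coro:indeterm}.

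For explicit generic (random rational) $\X$ and $\Y$, Macaulay2 should return a zero-dimensional radical ideal of degree $3$. Semicontinuity then extends the result to generic data: degree and reducedness are open conditions on $(\X,\Y)$, and nonemptiness holds generically. The remaining delicate point is that the Goepel map is only birational onto its image. I must therefore check that it separates the relevant orbits, so that no spurious solutions appear, and that the saturation removes exactly the non-stable loci.
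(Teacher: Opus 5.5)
\textbf{There is a genuine gap in the Goepel route.} That route is the one the paper takes: the $15$ even Fano polynomials as coordinates on the Goepel variety, evaluated on the projections of $\X$ and $\Y$, followed by a Macaulay2 computation. But the outcome you predict is false, and the remedy you sketch does not address the actual obstruction.

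The Fano map $\varphi'$ fails to separate orbits on a whole family of \emph{stable} configurations. By Proposition~\ref{prop:fano image of projn from weddle}, every $7$-tuple lying on a conic is sent to the single point $\omega=(1,\ldots,1)$; that is, $\varphi'$ contracts $\M_1^7\subset\M_2^7$ (Remark~\ref{rem:invrings}). The projection of $\X$ from $a$ lies on a conic exactly when $a$ is the vertex of a quadric cone through $\X$, i.e.\ when $a$ lies on the Weddle curve $\mathcal{W}_\X$, a sextic of genus $3$. So for all $a\in\mathcal{W}_\X$ and $b\in\mathcal{W}_\Y$ off $2\X$ and $2\Y$, the lifted vectors $t'(\X,a)$ and $t'(\Y,b)$ are both nonzero multiples of $\omega$. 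Hence the zero set of your saturated ideal contains the whole surface $\mathcal{W}_\X\times\mathcal{W}_\Y$, and the ideal is not zero-dimensional. These configurations are stable (Corollary~\ref{coro:stable7}) and lie outside the base locus of $\varphi'$. No saturation by $2\X$, $2\Y$, or by non-stable loci removes this component.

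\textbf{The missing idea} is to recognize this component and show it is not part of the centers-variety. A $7$-tuple on a conic is determined up to $\SL(3)$ by its class in $\M_1^7$, which has dimension $4$. The maps $a\mapsto[\Pcal(a)]$ and $b\mapsto[\Qcal(b)]$ trace two curves in $\M_1^7$, and for generic data these curves do not meet. The paper handles this by intersecting the images $T_\X\cap T_\Y$ in $\PP^{14}$, where the component collapses to the point $\omega$ (with multiplicity $8$). It discards $\omega$ by this moduli argument and keeps the three remaining reduced points. In your set-up you would have to saturate additionally by the preimage of $\omega$ and supply the same argument.

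\textbf{On your first route:} it cannot produce the number $3$ by a class computation. $S_6$ and $S_7$ are surfaces in the $6$-fold $\PP^3\times\PP^3$, so their expected intersection dimension is $-2$, and there is no intersection number from which to subtract an excess contribution. Computing $S_6\cap S_7$ as an ideal would sidestep the $\omega$ issue, since the $\M_2^6$ invariants separate stable orbits. The cost is that you would then have to handle the spurious points lying over $\X$ and $\Y$.
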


As an immediate corollary we get that the centers-variety is generically empty when $n \geq 8$ which completes the answer to our Question. 

\begin{corollary}
    \label{cor:n=8locus}
Let $\X = \{x_1,\dots,x_8\}$ and $\Y = \{y_1, \dots, y_8\}$  be two generic sets of eight ordered points in $\PP^3$.
Then there are no two cameras $A,B$ such that 
$(Ax_i)_{i=1}^8 \sim (By_i)_{i=1}^8$. 
In other words, the centers-variety is empty. 
\end{corollary}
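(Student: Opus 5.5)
We derive Corollary~\ref{cor:n=8locus} from Theorem~\ref{thm:n=7locus}. The plan is to restrict to the first seven point pairs and then impose the eighth pair as an extra condition. If cameras $A,B$ with centers $a,b$ satisfy $(Ax_i)_{i=1}^8 \sim (By_i)_{i=1}^8$ via some $H\in\SL(3)$, the same $H$ gives $(Ax_i)_{i=1}^7 \sim (By_i)_{i=1}^7$. Since $x_1,\dots,x_7$ and $y_1,\dots,y_7$ are generic, $(a,b)$ must lie in the centers-variety of these seven pairs. By Theorem~\ref{thm:n=7locus}, this variety is a finite set of three points $(a^{(j)},b^{(j)})$, $j=1,2,3$. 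These points depend only on $x_1,\dots,x_7,y_1,\dots,y_7$ and not on $x_8,y_8$.

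Next we show that the eighth pair generically breaks each of these three candidates. Fix $j$ with $a^{(j)}\notin\X$ and $b^{(j)}\notin\Y$, and choose cameras $A_j,B_j$ with these centers. The images $(A_jx_i)_{i=1}^7$ and $(B_jy_i)_{i=1}^7$ are stable by Corollary~\ref{coro:stable7}, and generically they contain four points in general position. Hence the $H_j\in\SL(3)$ with $A_jx_i = H_jB_jy_i$ for $i\le 7$ is unique up to scale. Any other pair of cameras with the same centers differs by left multiplication by elements of $\SL(3)$, which only conjugates $H_j$, so the eighth condition does not depend on this choice.

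The eighth condition then reads $A_jx_8 = H_jB_jy_8$ in $\PP^2$, which is two independent conditions on $(x_8,y_8)\in\PP^3\times\PP^3$. Concretely, fix $y_8$ with $y_8\ne b^{(j)}$. Then $x_8$ must lie on the line through $a^{(j)}$ that projects to $H_jB_jy_8$, which is a proper closed subset of $\PP^3$. The union over $j=1,2,3$ of these bad loci is a proper closed subset of $(\PP^3)^{16}$. Any configuration outside it admits no valid pair of centers, so the centers-variety is empty.

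The main obstacle is justifying that each of the three points of the $n=7$ centers-variety actually gives a well-defined equivalence, meaning $a^{(j)}\notin\X$ and $b^{(j)}\notin\Y$ with stable images. We also need the bad locus to be closed, i.e.\ the dependence on the seven pairs to be algebraic. For this we would appeal to the Zariski-closure definition: points of the closure lying over $\X\times\PP^3$ do not correspond to actual cameras anyway. Failing a clean argument, we would fall back on the dimension count. In the fiber-product setup of Theorem~\ref{thm:loci} with $n=8$, a generic fiber of $\pi_2$ is empty once we know that the expected dimension $14-2n=-2$ is negative and that the seven-point fibers are finite.
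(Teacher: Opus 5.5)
Your argument is correct, but it is organized differently from the paper's.

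\textbf{The paper's route.} The paper applies Theorem~\ref{thm:n=7locus} twice, to the overlapping subsets $\{1,\dots,7\}$ and $\{2,\dots,8\}$. This gives two triples of center pairs, and the paper concludes that generically the triples are disjoint because ``the points $(a,b)$ depend on the data.''

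\textbf{Your route.} You apply the theorem only once. The three candidates $(a^{(j)},b^{(j)})$ are determined by the first seven pairs, so they are independent of $(x_8,y_8)$. The eighth pair must then satisfy $A_jx_8=H_jB_jy_8$, where $H_j$ is pinned down by the first seven pairs. Even without your frame argument, $H_j$ is determined up to the finite stabilizer of a stable configuration. This is a codimension-two condition on $(x_8,y_8)\in\PP^3\times\PP^3$.

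\textbf{Comparison.} Your argument makes explicit the step the paper leaves implicit. To justify that the two triples are generically disjoint, one would have to show that the triple for $\{2,\dots,8\}$ really moves with $(x_8,y_8)$, which is essentially your computation. The cost is a routine constructibility argument: the candidates depend algebraically on the first seven pairs, so the bad set has dimension at most $42+4<48$ and lies in a proper closed subset of $(\PP^3)^{16}$. The paper's version is shorter and treats the two seven-subsets symmetrically.

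\textbf{Small remarks.}
\begin{enumerate}
\item The ``main obstacle'' you flag is not a real one. Any genuine solution for eight pairs restricts to a genuine solution for seven, so only candidates with $a^{(j)}\notin\X$ and $b^{(j)}\notin\Y$ can matter. Moreover, a finite centers-variety coincides with the locus of which it is the closure.
\item Changing cameras replaces $H_j$ by $UH_jV^{-1}$, not by a conjugate. This still leaves the eighth condition unchanged.
\item Your fallback dimension count would not suffice on its own, since Theorem~\ref{thm:loci} only gives a lower bound. You do not need it.
\end{enumerate}
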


\begin{proof}
We have to prove that for general 
$\X = \{x_1,\ldots,x_8\}$ and  
$\Y = \{y_1,\ldots,y_8\}$ in $\PP^3$,
and for any $a\in\PP^3\setminus\X$, $b\in\PP^3\setminus\Y$, the projection of $\X$ through $a$ and the projection of $\Y$ through $b$ can never correspond to the same point on $\M_2^8$.

By Theorem~\ref{thm:n=7locus}, there are only three distinct $(a,b)$ that can project the subsets $\{x_1,\ldots,x_7\}$ and  
$\{y_1,\ldots,y_7\}$ to the same point in $\M_2^7$. Similarly, there are only three distinct $(a,b)$ that can project the subsets $\{x_2,\ldots,x_8\}$ and  
$\{y_2,\ldots,y_8\}$
to the same point in $\M_2^7$. Since the points $(a,b)$ depend on the data, generically they will not share a common $(a,b)$ and the corollary is proved.
 \end{proof}

To set a sense of what happens when $n=7$ we invoke the strategy explained in Remark~\ref{rem:using the previous case} that will provide a necessary condition for the structure of the centers-variety.
Suppose $a,b$ are the centers of cameras $A,B$ such that 
$(Ax_i)_{i=1}^7 \sim (By_i)_{i=1}^7$.
 By 
Theorem~\ref{thm:n=6locus}, each subset of six points in $\X$ constrains $a$ to lie on a quadric surface $S_\beta \subset \PP^3$. Thus when $n=7$, $a$ must lie at the intersection of seven quadric surfaces of the form $S_\beta$. Computing this intersection in Macaulay2 for random $\X$ and $\Y$ yields three points (which are the candidate $a$ points). These three points appear as the 
intersection of a twisted cubic with a hyperplane in $\PP^3$. 
By a similar calculation using $\Y$, we get three $b$ points. This shows that the centers-variety is zero-dimensional but it is not clear 
how to pick the pairs $(a,b)$ that form the centers-variety. In the rest of this section, we explain how to complete the analysis.

\subsection{The Goepel variety} \label{sec:goepel}
Previously, we answered the Question from the Introduction by working directly with the moduli space $\M_2^n$ and a generating set of invariants for its coordinate ring $R_2^n$. While several facts are known about $\M_2^7$ there seems to be no list of generators of $R_2^7$ in the literature, and finding a generating set would be an interesting problem.
However, there is a closely related variety to $\M_2^7$, called the {\em Goepel variety}, which will allow us to continue using invariant theory in the $n=7$ case. 
 We will also see that a subset of invariants that generate $R_2^7$ suffices to separate $\SL(3)$-orbits in $(\PP^2)^7$.

By \cite[Chapter II, Theorem 2]{dolgachev-ortland}, $\M_2^7$ is a smooth rational variety of dimension $6$. 
 In \cite[Chapter 7, \S7]{dolgachev-ortland}, Dolgachev and Ortland construct a birational isomorphism between $\M_2^7$ and the Goepel variety in $\PP^{14}$ which is parametrized by {\em Goepel functions}. This variety is also studied in 
\cite{ren-sam-schrader-sturmfels, Freitag_Salvati, BBFM} and we will use some of their computations and results.
In the notation of \S \ref{sec:Mdn}, 
$$R_2^7=\oplus_m (\underbrace{
\sym^{3m}\CC^{3}\otimes\ldots\otimes\sym^{3m}\CC^{3}}_{7\textrm{\ times}})^{\SL(3)},$$

while the coordinate ring of the Goepel variety is generated by the subring of $R_2^7$ given by the $m=1$ summand $\left(R_2^7\right)_1$. This is a proper subring of $R_2^7$  
spanned by the invariants of multidegree
$(3,\ldots, 3)$.  Indeed, as we will see in Lemma \ref{lem:Fano}, every invariant in $\left(R_2^7\right)_1$ vanishes on a configuration of seven points in which two coincide. However, such a configuration is stable by  
Corollary \ref{coro:stable7} and hence there exists an invariant
which is nonzero when evaluated on it and this invariant
cannot belong to the subring generated by $\left(R_2^7\right)_1$. 
Another difference between $R_2^7$ and $(R_2^7)_1$ is explored in Remark \ref{rem:invrings}. 
Nevertheless, the Goepel variety $\mathcal{G}$ is birational to $\M_2^7$.
As is common language in applied mathematics, the Goepel variety $\mathcal{G}$ can be seen as a ``relaxation'' of $\M_2^7$. This section is dedicated to describing the Goepel variety.

Consider a set of seven generic points in $\PP^2$ given by the columns of the following matrix:
\begin{align} \label{eq:7 generic pts}
M(x,y,z) := \begin{bmatrix}
      1&0&0&1&x_{0}&y_{0}&z_{0}\\
      0&1&0&1&x_{1}&y_{1}&z_{1}\\
      0&0&1&1&x_{2}&y_{2}&z_{2}
      \end{bmatrix}.
\end{align}
An invariant of these seven points in $\PP^2$ is the {\em Fano bracket polynomial} \cite[Eq. 6.6]{ren-sam-schrader-sturmfels}
\begin{align}\label{eq:Fano013}
f:=[124][235][346][457][156][267][137] \in (R_2^7)_1.
\end{align}

It is multihomogeneous in $(x, y, z)$ of multidegree $(3,3,3)$ (recall the first four points have been fixed). Permuting the seven indices we can generate $7!$ such Fano polynomials which reduce to only $30$ distinct polynomials modulo the $168$ element automorphism group of the Fano plane. Of the $30$, only $15$ are linearly independent.

Another invariant from $(R_2^7)_1$ is the {\em Pascal bracket polynomial} \cite[Eq. 6.8]{ren-sam-schrader-sturmfels}
\begin{align}
 g :=  [127][347][567] \left( [134][156][235][246]-[135][146][234][256] \right).
\end{align}
There are $105$ Pascal polynomials and only $14$ of them are linearly independent. 
The union of Fano and Pascal polynomials  contains only $15$ linearly independent elements. 

The $30+105=135$ Fano and Pascal polynomials are called Goepel functions.
The Goepel variety $\mathcal{G}$ is the image of the map 
\begin{align} \label{eq:original goepel}
    \varphi \,:\, (\PP^2)^7 \dashrightarrow \PP^{134}
\end{align}
parametrized by the $135$ Goepel functions. From the above observations, $\mathcal{G}$ lies in a $\PP^{14} \subset \PP^{134}$. 
By \cite[Chapter IX, \S7, Theorem 5]{dolgachev-ortland}, $\M_2^7$ is birationally equivalent to the Goepel variety $\mathcal{G}$, and hence, $\mathcal{G}$ is a $6$-fold in $\PP^{14}$. However, $\M_2^7$ is not isomorphic to  $\mathcal{G}$ since, as we will see, $\mathcal{G}$ is not smooth. 
Regardless, we can use the Goepel variety for our purposes. Indeed, we will work with a simpler parametrization than \eqref{eq:original goepel} to prove Theorem~\ref{thm:n=7locus}. 

For the permutation $\pi \in \mathfrak{S}_7$ and the Fano polynomial $f$ from \eqref{eq:Fano013}, let 
\begin{align}
\begin{split}
    f_\pi := & [\pi(1)\,\pi(2)\,\pi(4)]
[\pi(2)\,\pi(3)\,\pi(5)][\pi(3)\,\pi(4)\,\pi(6)][\pi(4)\,\pi(5)\,\pi(7)][\pi(1)\,\pi(5)\,\pi(6)]\\&[\pi(2)\,\pi(6)\,\pi(7)][\pi(1)\,\pi(3)\,\pi(7)]
\end{split}    
\end{align} 
denote the result of permuting the indices in the brackets of $f$ by $\pi$.
Among the $30$ distinct Fano polynomials, $15$ come from even permutations of $f$ and the remaining $15$ from odd permutations. We will use the $15$  Fano polynomials arising from even permutations. They are denoted with a 
$+$ superscript and listed below. This choice is ``tensor-friendly''
and simplifies computations.
\\
\begin{equation}\label{eq:15even}\begin{array}{ccccc}
f_0^+:= f_{1234567} & f_1^+ := f_{1234675} & f_2^+ := f_{1234756} & 
f_3^+ := f_{1235476} & f_4^+ := f_{1235647} \\
f_5^+ := f_{1235764} & f_6^+ := f_{1236457} & f_7^+ := f_{1236574} & 
f^+_8 := f_{1236745} & f_9^+ := f_{1237465}\\
f_{10}^+ := f_{1237546} & f_{11}^+ := f_{1237654} &
f_{12}^+ := f_{1243576} & f_{13}^+ := f_{1243657} & f_{14}^+ := f_{1243765}.
\end{array}
\end{equation}

Projecting $\mathcal{G}$ onto these $15$ coordinates, we obtain an isomorphic image of $\mathcal{G}$ that we will denote as $\mathcal{G}'$. In other words, $\mathcal{G}'$ is the image of the {\em Fano map}
\begin{align}\label{eq:phi'}
    \varphi' \,:\, (\PP^2)^7 \dashrightarrow 
    \PP^{14}
\end{align}
parametrized by $(f_0^+, \ldots, f_{14}^+)$. 
The indeterminacy locus of $\varphi'$ consists of $21$ components of codimension $2$, each corresponding to a coincident pair of the seven points, another $21$ components of codimension $3$  corresponding to five points being collinear,
and $35$ components of codimension $3$ corresponding to the condition that four  points are collinear and the remaining three are also collinear. Fixing four general points, we get only 
$12+3=15$ components of codimension $2$ ($12$ of degree $1$ and $3$ of degree $3$), corresponding to a coincident pair of the seven points, six components of codimension $3$ of the first kind, and $18$ components of codimension $3$ of the second kind. This can be checked computationally using Macaulay2, see the ancillary file \verb|fanos_even.m2|.

The following lemma is elementary.

\begin{lemma}\label{lem:Fano}
   Fano polynomials vanish on configurations of seven points in which two  coincide. 
\end{lemma}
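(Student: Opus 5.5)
The plan is a purely combinatorial check on the bracket structure of the Fano polynomial $f$ in \eqref{eq:Fano013} and its permutations $f_\pi$. Since every Fano polynomial is of the form $f_\pi$ for some $\pi\in\mathfrak{S}_7$, and a coincidence $p_i=p_j$ becomes a coincidence $p_{\pi^{-1}(i)}=p_{\pi^{-1}(j)}$ after relabeling, it suffices to show that $f$ itself vanishes whenever $p_i=p_j$ for some $i\neq j$.

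The key observation is that the seven brackets of $f$ are indexed by the seven lines of the Fano plane:
\[
\{1,2,4\},\ \{2,3,5\},\ \{3,4,6\},\ \{4,5,7\},\ \{1,5,6\},\ \{2,6,7\},\ \{1,3,7\}.
\]
In the Fano plane any two distinct points lie on exactly one common line. We will verify this by inspecting the list, which amounts to checking that the $21$ pairs contained in these seven triples are all distinct; this is forced because $7\cdot 3=21=\binom{7}{2}$.

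Hence, for every pair $\{i,j\}$, there is a bracket $[ijk]$ among the seven factors of $f$. When $p_i=p_j$, the determinant $[ijk]$ has two equal rows and so vanishes. Therefore the product $f$ vanishes, and so does every Fano polynomial $f_\pi$.

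There is no real obstacle. The only point needing care is the claim that each of the $21$ pairs appears in some bracket, and that is settled by the counting argument above together with the pairwise distinctness of the triples' pairs. If one also wants the same statement for the full Goepel variety, one can note in addition that Pascal polynomials lie in the span of the Fano ones. That extra fact is not needed for the lemma as stated.
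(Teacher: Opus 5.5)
Your proof is correct and is essentially the paper's argument: every pair $\{i,j\}$ lies in one of the seven Fano-line brackets, this covering property is preserved under permutations in $\mathfrak{S}_7$, and so a coincidence $p_i=p_j$ kills the corresponding bracket. One wording fix: the count $7\cdot 3=21=\binom{7}{2}$ does not by itself force the $21$ pairs to be distinct; it is the distinctness, checked by inspection, combined with the count that shows all pairs are covered, which is how your final paragraph correctly puts it.
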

\begin{proof}
    Consider $f=[124][235]346]457][156][267][137]$.
    Each pair $(i,j)$ with $1\le i<j\le 7$ belongs to one of the above seven  brackets. This property is retained under a permutation in $\mathfrak{S}_7$, and hence the result.
    A more geometric proof comes by inspection of the Fano matroid.
\end{proof}

Analogous to $f_0^+,\ldots f_{14}^+$, we have $f_0^-,\ldots f_{14}^-$,
corresponding to odd permutations.
It is well known that every cubic symmetric invariant of seven points vanishes, see \cite[end of \S 4]{Ottaviani_Sernesi}. Therefore, 
\begin{align} \sum_{i=0}^{14}f_i^++\sum_{i=0}^{14}f_i^-=0.
\end{align}
At the same time, there is a unique cubic skew-symmetric invariant of seven points, called the {\em Morley invariant}, which is (by the uniqueness)
\begin{align} 
M(p_1,\ldots, p_7)=\sum_{i=0}^{14}f_i^+-\sum_{i=0}^{14}f_i^-.
\end{align}
The two formulas above imply that
\begin{align} 
M(p_1,\ldots, p_7)=2\sum_{i=0}^{14}f_i^+. 
\end{align}
The vanishing of the Morley invariant corresponds to cutting the Goepel variety with the hyperplane in which the coordinates sum to zero. 
This vanishing has a 
geometric meaning, described in \cite[\S 4, \S 8]{Ottaviani_Sernesi}. It follows that $\sum_{i=0}^{14}f_i^+=0$ if and only if the corresponding seven points define a $2:1$ covering of $\PP^2$ which ramifies over a {\em Lueroth quartic}. Equivalently, the seven points are the eigenvectors of a ternary cubic for some quadratic form on $\CC^3$, see \cite[Coroll. 9.3]{Ottaviani_Sernesi} or \cite[\S 5]{AboSeigalSturmfels}.
Actually in \cite{AboSeigalSturmfels} it is shown that the variety of $7$-tuples of points that are eigenvectors of a smooth cubic has dimension $9$, but they considered a fixed quadratic form on $\CC^3$.

 By \cite[Theorem 5.1]{ren-sam-schrader-sturmfels}, $\mathcal{G}$ is a 
$6$-dimensional variety in $\PP^{14}$ of degree $175$. Its defining prime ideal is generated by $35$ cubic polynomials and $35$ quartic polynomials. Hence, the same is true for the vanishing ideal of $\mathcal{G}'$.  The parametrization used to define $\mathcal{G}$ in \cite{ren-sam-schrader-sturmfels} is using a different set of $15$ coordinates denoted $(r,s_\bullet, t_\bullet)$ which are related to $f_0, \ldots, f_{14}$ by a linear transformation. The cubics, quartics and the linear change of coordinates can all be found in the ancillary files at \url{https://arxiv.org/abs/1208.1229}.

All entries in the Jacobian of the cubics and quartics vanish identically at $36$ points on $\mathcal{G'}$ which means that the Jacobian has rank $0$ at these points. One such {\em highly singular} point is 
\begin{align}
    \omega = (1,1,1,1,1,1,1,1,1,1,1,1,1,1,1)
\end{align}
which is the image under $\varphi'$ of seven points in $\PP^2$ lying on a conic, see Proposition \ref{prop:fano image of projn from weddle}.
The remaining $35$ highly singular points come from the ${7 \choose 4} = 35$ ways of choosing four of the seven points to lie on a line. All of them have $3$ ones and $12$ zeros; an example being 
\begin{align} \label{eq:omega}
    (1,1,1,0,0,0,0,0,0,0,0,0,0,0,0 ).
\end{align}
(In contrast, the $7!$ permutations of seven generic points in $\PP^2$ yield 
$7!$ distinct images under the Fano map $\varphi'$ and no point among these is 
singular.)
Points on the line joining any pair of the $36$ highly singular points are also singular, but at these points, the Jacobian has rank $4$. Interestingly, these points do not have preimages in $(\PP^2)^7$. This implies that the image of $\varphi'$ is not closed.
In conclusion, we have found ${36\choose 2}$ lines consisting of singular points on the Goepel variety $\mathcal{G}'$.

\subsection{The computation of the centers-variety using $\mathcal{G}'$} \label{sec:(a,b) locus 7 pts}
Take two sets of seven points, $\X$ and $\Y$ in $\PP^3$, which we can assume without loss of generality to be the columns of the matrices
\begin{align} \label{eq:7ptsP3}
\begin{pmatrix}
    1 & 0 & 0 & 0 & 1 & r_0 & s_0\\
    0 & 1 & 0 & 0 & 1 & r_1 & s_1 \\
    0 & 0 & 1 & 0 & 1 & r_2 & s_2 \\
    0 & 0 & 0 & 1 & 1 & r_3 & s_3
\end{pmatrix} \,\,\,\,\textup{ and } \,\,\,\,
\begin{pmatrix}
    1 & 0 & 0 & 0 & 1 & r_0' & s_0'\\
    0 & 1 & 0 & 0 & 1 & r_1' & s_1' \\
    0 & 0 & 1 & 0 & 1 & r_2' & s_2' \\
    0 & 0 & 0 & 1 & 1 & r_3' & s_3'
\end{pmatrix}.
\end{align}
By Corollary~\ref{coro:indeterm}, we can project $\X$ through any $a \not \in \X$ to work with $\M_2^7$. However, since we are now working with the Goepel variety and not $\M_2^7$, we need to ensure that $a \not \in 2\X$, similar to the $n=5$ case. Indeed, if $a$ lies on the line joining two points in $\X$, then the projections of the two points coincide in $\PP^2$, and by Lemma~\ref{lem:Fano}, the map $\varphi'$ \eqref{eq:phi'} applied to image of $\X$ through $a$ is not defined. The description of the indeterminacy locus of the ideal of even Fano polynomials (after \eqref{eq:phi'}) and 
Corollary~\ref{coro:stable7} together ensure that there is no further condition to impose on $a$. 

Projecting $\X$ through $a \in \PP^3\setminus 2\X$ we obtain a configuration $\Pcal(a) = 
( p_1(a), \ldots, p_7(a))\in \PP^2$ where $p_i(a) = Ax_i$. They are the columns of the matrix
\begin{align}\label{eq:projected7pointsa}
        \begin{pmatrix}
      1&0&0&a_{0}&a_{0}-a_{3}&r_{3}a_{0}-r_{0}a_{3}&s_{3}a_{0}-s_{0}a_{3}\\
      0&1&0&a_{1}&a_{1}-a_{3}&r_{3}a_{1}-r_{1}a_{3}&s_{3}a_{1}-s_{1}a_{3}\\
      0&0&1&a_{2}&a_{2}-a_{3}&r_{3}a_{2}-r_{2}a_{3}&s_{3}a_{2}-s_{2}a_{3}
      \end{pmatrix}.
\end{align}
Each $p_i(a)$ is a function of $a$ with coefficients determined by $\X$. 
Similarly, projecting $\Y$ through $b \in \PP^3\setminus 2\Y$ we obtain a configuration $\Qcal(b)= ( q_1(b), \ldots, q_7(b) ) \in \PP^2$ where $q_i(b) = By_i$. They are the columns of the following matrix, 
each $q_i(b)$ a function of $b$ with coefficients determined by $\Y$.
\begin{align}
        \begin{pmatrix}
      1&0&0&b_{0}&b_{0}-b_{3}&r'_{3}b_{0}-r'_{0}b_{3}&s'_{3}b_{0}-s'_{0}b_{3}\\
      0&1&0&b_{1}&b_{1}-b_{3}&r'_{3}b_{1}-r'_{1}b_{3}&s'_{3}b_{1}-s'_{1}b_{3}\\
      0&0&1&b_{2}&b_{2}-b_{3}&r'_{3}b_{2}-r'_{2}b_{3}&s'_{3}b_{2}-s'_{2}b_{3}
      \end{pmatrix}.
\end{align}

Let $T_\X$ denote the subvariety of $\mathcal{G}'$ parametrized by $\varphi'(\Pcal(a))$ as $a$ varies in $\PP^3$. 
The ideal of $T_\X$ is obtained by  eliminating $a_0,a_1,a_2,a_3$ from 
\begin{align}
    \langle  t_i - f_i^+(\Pcal(a)) \rangle \subset \CC[a_0,\ldots,a_3,t_0,\ldots, t_{14}]
\end{align}
and we see that $T_\X$ is a $3$-fold in 
$\PP^{14}$ of degree $49$ cut out by $4$ linear polynomials, $6$ quadrics and $20$ cubics; 
in particular, $T_\X$ is contained in a $\PP^{10}$. 

Similarly, let  $T_\Y$ be the subvariety of $\mathcal{G}'$ parametrized by $\varphi'(\Qcal(b))$ as $b$ varies in $\PP^3$. It is also a $3$-fold lying in  a $\PP^{10}$ cut out by $4$ linear polynomials, $6$ quadrics and $20$ cubics. 

We are interested in pairs $(a,b) \in \left(\PP^3\setminus 2\X\right) \times
\left(\PP^3\setminus 2\Y\right)$ such that  $\varphi'(\Pcal(a)) = \varphi'(\Qcal(b))$.
These are precisely the points in $T_\X \cap T_\Y$. Computing this  intersection in Macaulay2, we obtain four distinct points. Three of them depend on the data ($\X, \Y$) and have multiplicity $1$ each, while the last one is the special point $\omega$ in \eqref{eq:omega} which has 
multiplicity $8$.
If $\varphi'(\Pcal(a)) = \varphi'(\Qcal(b)) = \omega$, then $\Pcal(a)$ and $\Qcal(b)$ 
lie respectively on a conic (isomorphic to $\PP^1$), and are in general not equivalent, since the $\SL(2)$-classes of $7$ points on $\PP^1$ have $4$ moduli. So $\omega$ has to be disregarded and only three pairs $(a,b)$ remain in the centers-variety.
We elaborate on these comments about $\omega$ below.

\subsection{The Weddle surface and curve in $\PP^3$.} \label{subsec:Weddle} 
The point $\omega$ can be explained by Weddle surfaces (seen already in \S~\ref{sec:n=6}) and curve, which we learned  from \cite[\S 2]{chiantini_weddlel}. A {\em Weddle surface} in $\PP^3$ is defined from six fixed generic points $z_1, \ldots, z_6$ 
in $\PP^3$ as follows. The space of all quadrics in $\PP^3$ that contain $z_1, \ldots, z_6$ is isomorphic to a $\PP^3$. Fix four independent quadrics $\mathcal{F}_0, \mathcal{F}_1, \mathcal{F}_2, \mathcal{F}_3$ in this space so that any  quadric through $z_1, \ldots, z_6$ can be written uniquely as 
$F^\lambda := \lambda_0 \mathcal{F}_0 + \lambda_1 \mathcal{F}_1 + \lambda_2 \mathcal{F}_2 + \lambda_3 \mathcal{F}_3$. Let 
$Q_i(z) := z^\top Q^i z$ be the quadratic form defining $\mathcal{F}_i$ with 
symmetric matrix $Q^i$. Then the symmetric matrix of the  quadratic form associated to $\mathcal{F}^\lambda$ is 
$Q^\lambda := \sum_{i=0}^3 \lambda_i Q^i$. 

Given any $z \neq z_i$ in $\PP^3$ there is a hyperplane of quadrics $\mathcal{F}^\lambda$ through $z$. 
Since the three quadrics in a basis of this hyperplane will intersect in eight points which include $z$ and  $z_1, \ldots, z_6$, there is an eighth point $z'$ that also lies on the three basis quadrics. 
Therefore,  we have the $2:1$ morphism
\begin{align}
    \eta \,:\, \PP^3 \mapsto \PP^3, \,\, z \mapsto \left(Q_0(z),\ldots, Q_3(z)\right).
\end{align}
The ramification locus of $\eta$ is the set of all $z$ such that $z=z'$ and is cut out by $\det J_\eta(z) = 0$ where $J_\eta$ is the Jacobian of $\eta$.
Since $J_\eta(z)$ is filled with linear forms in $z$, 
$\det(J_\eta(z))$ is a homogeneous quartic polynomial in $z_0,\ldots, z_3$ and the (hyper)surface it defines in $\PP^3$ is the  Weddle surface associated to $z_1, \ldots, z_6$. The branch locus of $\eta$ is the {\em Kummer surface} associated to $z_1, \ldots, z_6$. These are birationally equivalent quartic surfaces in $\PP^3$.

The Weddle surface associated to $z_1, \ldots, z_6$ has some well known properties which can be found in  
in \cite[Chapter 15, pp 170]{hudson-kummer-book} for instance. The $15$ lines joining any pair of points $z_i,z_j$ lie on the surface, the points $z_1, \ldots, z_6$ are the nodes of the Weddle surface, and the unique twisted cubic $C$ passing through $z_1, \ldots, z_6$ also lies on the Weddle surface and is contracted by the map $\eta$. The reason is that when $z\in C$ the three quadrics meet on $C$
and $z'$ is no longer defined. One can write a formula for the quartic defining the Weddle surface in terms of  $z_1, \ldots, z_6$. Assume without loss of generality that the six points are the columns of 
\begin{align}
\begin{pmatrix} 
1 & 0 & 0 & 0 & r_0 & s_0\\
0 & 1 & 0 & 0 & r_1 & s_1 \\ 
0 & 0 & 1 & 0 & r_2 & s_2 \\
0 & 0 & 0 & 1 & r_3 & s_3
\end{pmatrix}.
\end{align}
Then, by \cite[Chapter 15, pp 170]{hudson-kummer-book}, the quartic defining the Weddle surface associated to these points is cut out by the equation
\begin{align}
    \det \begin{pmatrix} 
    r_0s_0x_0^{-1} & x_0 & r_0 & s_0 \\
    r_1s_1x_1^{-1} & x_1 & r_1 & s_1 \\
    r_2s_2x_2^{-1} & x_2 & r_2 & s_2 \\
    r_3s_3x_3^{-1} & x_3 & r_3 & s_3 \\
    \end{pmatrix} = 0.
\end{align}

For us, the most important property is that the Weddle surface consists of the vertices of (singular) quadric cones through $z_1, \ldots, z_6$. Indeed, $z$ is on the Weddle surface if and only if $\rank (J_\eta(z)) < 4$ if and only if there exists $\lambda_0, \ldots, \lambda_3$ such that 
    $\sum \lambda_i z^\top Q^i = 0$ which is equivalent to 
    $0 = \sum \lambda_i Q^i z = Q^\lambda z$, 
    i.e., $z$ being the vertex of the singular quadric in the family with matrices $Q^\lambda$.
    
Now suppose we fix a collection $\X$ of seven generic points in $\PP^3$ as in  \eqref{eq:7ptsP3}. Then we obtain seven Weddle surfaces associated to each subset of six points in $\X$. Let $\mathcal{W}$ be the intersection of these seven Weddle surfaces. If $\mathcal{F}$ is a singular quadric that passes through all seven points in $\X$, then its vertex lies on every one of the seven Weddle surfaces and hence, on $\mathcal{W}$. Since the space of quadrics through seven points is isomorphic to a $\PP^2$ and the singularity condition imposes an additional constraint, we get that $\mathcal{W}$ is at least a curve. This must mean that it is indeed a curve since otherwise all seven Weddle surfaces must coincide.  Call $\mathcal{W}$ the {\em Weddle curve} associated to the $7$-tuple $\X$. In particular, every point on $\mathcal{W}$ is the vertex of a quadric that passes through $\X$. It is known that the Weddle curve has
genus $3$ and degree $6$, see for example \cite[\S 2.4]{chiantini_weddlel},\cite{chiantinietal}. 
Under the association $\M_3^7 = \M_2^7$,
the Weddle curve corresponds to the plane quartic curve which is the ramification locus of the $2:1$ map defined by the seven points, see \cite[\S 4.2]{Giorgio_quartics}. 

Suppose we now project the configuration $\X$ in \eqref{eq:7ptsP3} from a point $a$ on the Weddle curve associated to $\X$. Then $a$ is the vertex of a singular quadric that passes through all the points in $\X$ and hence the seven points in $\Pcal(a)$ shown in  \eqref{eq:projected7pointsa} will lie on a conic in $\PP^2$. We now compute the image of $\Pcal(a)$ under the Fano map $\varphi'$ in \eqref{eq:phi'}.

\begin{proposition} \label{prop:fano image of projn from weddle}
    If a configuration of $\mathcal{Q}$ of seven points  in $\PP^2$ lie on a conic, then $\varphi'(\mathcal{Q}) = \omega \in 
    \PP^{14}$. More generally, for any Fano polynomial $f_\pi$, 
    $f_\pi(\mathcal{Q}) = \textup{sign}(\pi) = \pm 1$. 
\end{proposition}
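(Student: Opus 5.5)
Since the statement is projective, the claim is that all fifteen even Fano polynomials take the same value on $\mathcal{Q}$, and that odd Fano polynomials take the negative of that value. The natural tool is the standard parametrization of points on a smooth conic. After applying an element of $\SL(3)$, which multiplies every Fano polynomial by the same factor $1$ since each is an invariant, we may take the conic to be the image of the Veronese map $\nu(s,t)=(s^2,st,t^2)$. The seven points are then $p_i=\nu(s_i,t_i)$. The key identity is the Vandermonde formula
\begin{align*}
[ijk](\mathcal{Q}) = \det\bigl(\nu(s_i,t_i),\nu(s_j,t_j),\nu(s_k,t_k)\bigr) = c\,(ij)(ik)(jk),
\end{align*}
where $(ij):=s_it_j-s_jt_i$ and $c$ is a fixed nonzero constant ($\pm1$ depending on ordering conventions). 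This reduces each bracket to a product of binary brackets.

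Next we substitute this into $f_\pi=\prod_{\text{lines }L}[L]$, a product over the seven lines of the (permuted) Fano plane. Every pair $\{i,j\}$ lies in exactly one line of the Fano plane (Lemma~\ref{lem:Fano}), and each line contributes the three pairs it contains. Hence
\begin{align*}
f_\pi(\mathcal{Q}) = c^7 \prod_{\{i,j\}} \pm (ij),
\end{align*}
a product over all $21$ unordered pairs, each appearing exactly once up to sign. So, up to a sign depending on $\pi$, every Fano polynomial equals the same quantity $c^7\Delta$, where $\Delta=\prod_{i<j}(ij)$ is the binary discriminant-type product. This $\Delta$ is nonzero when the seven points are distinct. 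The common nonzero factor $c^7\Delta$ is the scalar that disappears in $\PP^{14}$, which gives $\varphi'(\mathcal{Q})=\omega$ once the signs are shown to agree.

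The main obstacle is the sign bookkeeping: we must show that the sign relating $f_\pi(\mathcal{Q})$ to $c^7\Delta$ is $\mathrm{sign}(\pi)$ times the sign for $f$ itself, and normalize so that $f$ gives $+1$. Two routes are available.
\begin{enumerate}
\item \emph{Antisymmetry.} The function $\pi\mapsto f_\pi(\mathcal{Q})$ is obtained from $f(\mathcal{Q})$ by relabeling the points, i.e.\ $f_\pi(p_1,\dots,p_7)=f(p_{\pi(1)},\dots,p_{\pi(7)})$. The expression $c^7\Delta$ is alternating under permutations of the points, since it is a product of all pairwise binary brackets. Writing $f(\mathcal{Q})=\epsilon\, c^7\Delta$ for a universal sign $\epsilon$ (computed once, directly, for $f$ in \eqref{eq:Fano013}), we get
\begin{align*}
f_\pi(\mathcal{Q}) = \epsilon\, c^7\, \Delta(p_{\pi(1)},\dots,p_{\pi(7)}) = \mathrm{sign}(\pi)\,\epsilon\, c^7\Delta.
\end{align*}
Dividing by $\epsilon c^7\Delta$ gives $f_\pi=\mathrm{sign}(\pi)$, so the even ones are all $1$ and $\varphi'(\mathcal{Q})=\omega$.
\item \emph{Direct check.} As a fallback, one verifies the sign for the fifteen listed $f_i^+$ directly, or in Macaulay2.
\end{enumerate}
Degenerate cases, such as a conic that is singular or two coinciding points, are excluded. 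In those cases $\varphi'$ is undefined or the Veronese argument needs the conic to be smooth, so we assume a smooth conic with distinct points.
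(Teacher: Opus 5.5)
Your proof is correct, and it takes a genuinely different route from the paper's.

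\textbf{The paper's route.} The paper never parametrizes the conic and stays inside bracket algebra. It computes the single ratio $f/f_{(12)}$ and observes that the index $4$ cancels, so only the six points $\mathcal{Q}\setminus\{q_4\}$ are involved. After relabeling by $(237)$, it recognizes the resulting quotient as the classical bracket identity saying six points lie on a conic (the relabeled vanishing of $t_5$ in \eqref{eq:6onconic}). This gives $f/f_{(12)}=-1$, and the general case is then asserted by writing $\pi$ as a product of adjacent transpositions.

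\textbf{Your route.} You put the conic in Veronese normal form and factor each bracket as the binary Vandermonde product $c\,(ij)(ik)(jk)$. You then use only the Steiner-triple-system property of the Fano plane: every pair lies on exactly one line, which follows from Lemma~\ref{lem:Fano} together with the count $7\cdot 3=\binom{7}{2}$. This collapses every $f_\pi$ to $\pm c^7\Delta$ with $\Delta=\prod_{i<j}(ij)$. The sign then follows from $f_\pi(p_1,\dots,p_7)=f(p_{\pi(1)},\dots,p_{\pi(7)})$ and the alternating behaviour of $\Delta$, so you never actually need to compute $\epsilon$.

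\textbf{What each approach buys.} The paper's argument is shorter, given the six-point conic identity it had already introduced, and it is coordinate-free. Yours has three advantages.
\begin{enumerate}
\item It treats all $\pi$ uniformly. The paper verifies the ratio only for $(12)$. It leaves implicit the analogous checks for the other adjacent transpositions, and their propagation to every relabeled $f_\sigma$ (by symmetry of the hypothesis).
\item It exhibits the common value and shows it is nonzero for distinct points on a smooth conic. Hence $\varphi'$ is actually defined at $\mathcal{Q}$, a point the paper does not address.
\item It makes transparent why $\varphi'$ contracts the conic locus $\M_1^7$ to the single point $\omega$, as in Remark~\ref{rem:invrings}: all fifteen even Fano coordinates restrict to the same function $\epsilon c^7\Delta$ there.
\end{enumerate}
Your restriction to a smooth conic with distinct points is the right one. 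On a singular conic or with coincident points, $\varphi'$ is undefined. The paper's proof needs the same assumption, since it divides by brackets.
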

\begin{proof}
    Consider the Fano polynomial $f = [124][235][346][457][156][267][137]$ 
    and the adjacent transposition $(12) \in \mathfrak{S}_7$. We will first argue that 
$f(\mathcal{Q})/f_{(12)}(\mathcal{Q}) = \pm 1.$ This will imply that $f_\pi(\mathcal{Q})/ f(\mathcal{Q}) = \pm 1$ since 
$\pi$ is a product of adjacent transpositions. 
    First we compute that
    $f_{(12)} = -[124][135][346][457][256][167][237]$
    and hence 
    \begin{align}\label{eq:fanoratio}
        \frac{f}{f_{(12)}} = -\frac{[235][156][267][137]}{[135][256][167][237]}
    \end{align}
Note that the index $4$ is missing from \eqref{eq:fanoratio}. Since the points in $\mathcal{Q} \setminus \{q_4\}$ lie on a conic, they satisfy the bracket equation 
\begin{align}\label{eq:conic bracket reln}
  \frac{[123][156][257][367]}{[125][136][237][567]}=1
  \end{align}
This is the same as the identity in \eqref{eq:6onconic} where we have shifted the indices $4 \mapsto 5, 5 \mapsto 6, 6 \mapsto 7$ to account for the missing $4$. See for example, \cite[Proposition 23 (ii)]{OttavianiLectures} for this classical identity in invariant theory.
Check that the permutation $\tau = (237)$
will permute the left hand side of \eqref{eq:conic bracket reln} to $-\frac{f}{f_{(12)}}$ and hence $\frac{f}{f_{(12)}}=-1$. Therefore, 
if $\pi$ is an even permutation, 
then $\frac{f_\pi}{f} = 1$ which gives the result since the Fano map $\varphi'$ is made up of $15$ Fano polynomials corresponding to even permutations.
\end{proof}

\begin{remark}\label{rem:invrings} The fact that the Weddle curve is contracted to a (singular) point in $\mathcal{G}'$ by the map $\varphi'$
can be globalized by the following construction.
The subset of $\M_2^7$ consisting of configurations of points lying on a conic is easily seen to be isomorphic to $\M_1^7$.
We get an embedding $\M_1^7\subseteq \M_2^7$.
The birational  map $\M_2^7\to\mathcal{G}'$ contracts the subset $\M_1^7$ to the point $\omega\in\mathcal{G}'$ and shows another difference between $\M_2^7$ and $\mathcal{G}'$ and their invariant rings.
\end{remark}

\bibliographystyle{plain}
\bibliography{references}
\end{document}